\newtheorem{theorem}{Theorem}[section]
\newtheorem{lemma}[theorem]{Lemma}
\newtheorem{proposition}[theorem]{Proposition}
\newtheorem{corollary}[theorem]{Corollary}
\newtheorem{cliam}[theorem]{Claim}
\theoremstyle{definition}
\newtheorem{example}[theorem]{Example}
\theoremstyle{remark}
\newtheorem{remark}[theorem]{Remark}
\numberwithin{equation}{section}
\def\Hom{\mbox{Hom}}
\def\ad{\mbox{ad}}
\def\dim{\mbox{dim}}
\def\Aut{\mbox{Aut}}
\def\DAut{\mbox{DAut}}
\def\irr{\mbox{irr}}
\def\exp{\mbox{exp}}
\def\what#1{\widehat{#1}}
\begin{document}
\title{Generalized McKay Quivers, Root System and Kac-Moody Algebras}

\author{Bo Hou}
\address{College of Applied Science, Beijing University of
Technology, Beijing 100124, People's Republic China}
\email{houbo@emails.bjut.edu.cn}

\author{Shilin Yang}
\address{College of Applied Science, Beijing University of
Technology, Beijing 100124, People's Republic China}
\email{slyang@bjut.edu.cn}
\thanks{The second author was supported by the Science and
Technology Program of Beijing Education Committee (Grant
No. KM200710005013) and Foundation of Selected Excellent Science and
Technology Activity for Returned Scholars of Beijing.}

%\date{January 1, 2001 and, in revised form, June 22, 2001.}

\subjclass[2000]{Primary 16G10, 16G20, 17B67}

\keywords{Generalized McKay quiver, Representation of quiver, Root
system, Kac-Moody algebra}

\begin{abstract}
Let $Q$ be a finite quiver and $G\subseteq\Aut(\mathbbm{k}Q)$ a
finite abelian group. Assume that $\widehat{Q}$ and $\Gamma$ is the
generalized Mckay quiver and the valued graph corresponding to $(Q,
G)$ respectively. In this paper we discuss the relationship between
indecomposable $\widehat{Q}$-representations and the root system of
Kac-Moody algebra $\mathfrak{g}(\Gamma)$. Moreover, we may lift $G$
to $\overline{G}\subseteq\Aut(\mathfrak{g}(\widehat{Q}))$ such that
$\mathfrak{g}(\Gamma)$ embeds into the fixed point algebra
$\mathfrak{g}(\widehat{Q})^{\overline{G}}$ and
$\mathfrak{g}(\widehat{Q})^{\overline{G}}$ as
$\mathfrak{g}(\Gamma)$-module is integrable.
\end{abstract}

\maketitle
\section{Introduction} \label{sect-1}
Thirty years ago, McKay introduced a class of quivers, now called
the McKay quivers, for some finite subgroups of the general linear
group \cite{Mc}. Let $\mathbb{C}$ denote the complex number field.
McKay observed that the McKay quiver for $G\subseteq
\mathrm{SL}(2,\mathbb{C})$ is the double quiver of the extended
Dynkin quiver $\widetilde{A}_{n}$, $\widetilde{D}_{n}$,
$\widetilde{E}_{6}$, $\widetilde{E}_{7}$, $\widetilde{E}_{8}$
respectively. Furthermore, the corresponding Dynkin diagram is the
same as the one occurring in the minimal resolution of singularities
for the quotient surface $\mathbb{C}/G$ (see \cite{Br}). McKay
quiver has played an important role in many mathematical fields such
as quantum group, algebraic geometry, mathematics physics and
representation theory (see, for examples \cite{Au, CH, GM, G, Lu,
Re}).

Let $V$ be a finite vector space over a field $\mathbbm{k}$ of
characteristic 0 and $G\subseteq \mathrm{GL}_{\mathbbm{k}}(V)$  a
finite group. Assume that $\mathrm{T}_{\mathbbm{k}}(V)$ is  the
tensor algeba of $V$ over $\mathbbm{k}$. It is well-known that the
skew group algebra $\mathrm{T}_{\mathbbm{k}}(V)\ast G$ is Morita
equivalent to the path algebra $\mathbbm{k}\what{Q}$, where
$\what{Q}$ is the McKay quiver of $G$ (see \cite{GM}). In other
words, the McKay quiver  realizes the Gabriel quiver of
$\mathrm{T}_{\mathbbm{k}}(V)\ast G$. It is natural to ask how to
determine the Gabriel quiver of skew group algebra $\Lambda\ast G$
for any algebra $\Lambda$. Recently, for any path algebra
$\mathbbm{k}Q$ over an algebraically closed field $\mathbbm{k}$ and a
finite group $G$ such that char$\mathbbm{k}\nmid |G|$, if the action
of $G$ on $\mathbbm{k}Q$ permutes the set of primitive idempotents
and stabilizing the vector space spanned by the arrows, Demonet in
\cite{De} has constructed a quiver $\widehat{Q}$ such that the path
algebra $\mathbbm{k}\widehat{Q}$ is Morita equivalent to the skew
group algebra $\mathbbm{k}Q\ast G$. The quiver $\widehat{Q}$ can be
viewed as a generalization of McKay quiver, which is called the
generalized McKay quiver of $(Q, G)$ in this paper.

Given a finite quiver $Q$ with an admissible automorphism ${\bf a}$.
Hubery in \cite{Hu, Hu1} described the correspondence between
dimension vectors of the isomorphically invariant
$Q$-indecomposables and the positive root system of ${\frak
g}(\Gamma)$, where $\Gamma$ is the valued graph of $(Q, {\bf a})$.
Motivated by Hubery's work, the aim of this paper is to establish the
correspondence between the indecomposable
$\widehat{Q}$-representations and the positive roots of the
symmetrizable Kac-Moody algebra $\mathfrak{g}(\Gamma)$ of the valued
graph $\Gamma$ associated to $(Q, G)$, where $Q$ is a finite quiver
and $G$ is a finite abelian  automorphism group of $\mathbbm{k}Q$.
Moreover, we can lift $G$ to an automorphism group $\overline{G}$ of
Kac-Moody algebra $\mathfrak{g}:=\mathfrak{g}(\widehat{Q})$ of
$\widehat{Q}$, such that $\mathfrak{g}(\Gamma)$ can be embedded into
the fixed point subalgebra $\mathfrak{g}^{\overline{G}}$. In this
case, we also show that $\mathfrak{g}^{\overline{G}}$ as a
$\mathfrak{g}(\Gamma)$-module is integrable. Compared with Hubery's
work, a more general  description is given by approach of the
generalized McKay quiver.

\medskip
For a finite quiver $Q=(I, E)$ and a finite abelian group
$G\subseteq\Aut(\mathbbm{k}Q)$ (the algebra automorphism group of
$\mathbbm{k}Q$). We always assume that the action of $G$ on $Q$ is
admissible, i.e., no arrow connects to vertices in the same orbit.
Then we can get a valued graph $\Gamma$ without loops and a
generalized McKay quiver $\widehat{Q}$ corresponding to $(Q, G)$. By
\cite{RR}, we can define an action of $G$ on
$\mathbbm{k}\widehat{Q}$ due to the Morita equivalence between the skew
group algebra $\mathbbm{k}Q\ast G$ and $\mathbbm{k}\widehat{Q}$.
Therefore this action induces an action on
$\widehat{Q}$-representations. Let $G_{X}$ be a complete set of left
coset representatives of $H_{X}=\{g\in G \mid {^{g}X}\cong X\}$ in
$G$ for any $\widehat{Q}$-representation $X$, let $\mathbb{Z}I$,
$\mathbb{Z}\widehat{I}$ and $\mathbb{Z}\mathcal{I}$ be the root
lattice of $Q$, $\widehat{Q}$ and $\Gamma$, respectively. Applying
the equivalence between representation category of $\widehat{Q}$ and
module category of the skew group algebra $\mathbbm{k}Q\ast G$ and
the fact that each $\mathbbm{k}Q\ast G$ module as a
$Q$-representation is $G$-invariant, we  define a map
$$
h:\quad\mathbb{Z}\widehat{I}\longrightarrow(\mathbb{Z}I)^{G}
\longrightarrow\mathbb{Z}\mathcal{I}
$$
where $(\mathbb{Z}I)^{G}$ is the fixed point set of $\mathbb{Z}I$
under the action of $G$.  The map $h$ builds a bridge between the
dimension vectors of indecomposable $\widehat{Q}$-representations
and the root system of Kac-Moody algebra $\mathfrak{g}(\Gamma)$. The
first main result of this paper is described as follows.

\begin{theorem}\label{thm1-1}
Let $Q$ be a quiver without loops and with an admissible action of a
finite abelian subgroup $G\subseteq{\rm Aut}(\mathbbm{k}Q)$, where
$\mathbbm{k}$ is an algebraically closed field with ${\rm
char}\mathbbm{k}\nmid |G|$. Assume that $\Gamma$ and $\widehat{Q}$
is the valued graph and generalized McKay quiver associated to $(Q,
G)$. Then

(1) the images under $h$ of the dimension vectors of all the
indecomposable $\widehat{Q}$-representations give the positive root
system of the symmetrisable Kac-Moody algebra
$\mathfrak{g}(\Gamma)$;

(2) for each positive real root $\alpha$ of $\mathfrak{g}(\Gamma)$,
let $X$ be a $\widehat{Q}$-representation such that $h({\bf
dim}X)=\alpha$. Then there are $|G_{X}|$ indecomposable
$\widehat{Q}$-representations (up to isomorphism) such that their
dimension vectors under $h$ are $\alpha$.
\end{theorem}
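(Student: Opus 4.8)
The plan is to derive both parts from Kac's theorem applied to $\widehat{Q}$ together with the folding of the root system of $\mathfrak{g}(\widehat{Q})$ induced by the $G$-action. First I would record that, because the action of $G$ on $Q$ is admissible, the generalized McKay quiver $\widehat{Q}$ has no loops, so Kac's theorem applies to it directly: the dimension vectors of the indecomposable $\widehat{Q}$-representations are precisely the positive roots of the symmetric Kac-Moody algebra $\mathfrak{g}(\widehat{Q})$, and every positive real root is realized by a unique indecomposable. The induced $G$-action coming from the Morita equivalence $\mathbbm{k}\widehat{Q}\simeq\mathbbm{k}Q\ast G$ of \cite{RR} permutes the vertices $\widehat{I}$, hence acts linearly on $\mathbb{Z}\widehat{I}$ and on the root system $\Delta(\widehat{Q})$; since twisting $X\mapsto{}^{g}X$ is an autoequivalence, one checks that ${}^{g}X_{\beta}\cong X_{g\beta}$ for the unique indecomposable $X_{\beta}$ attached to a real root $\beta$.

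The decisive identification I would establish is that the representation-theoretic map $h$ coincides with the Lie-theoretic folding projection $\mathbb{Z}\widehat{I}\to\mathbb{Z}\mathcal{I}$ attached to the valued graph $\Gamma$, i.e. that passing a $\widehat{Q}$-representation through the Morita equivalence to its underlying $G$-invariant $Q$-representation and recording the class in $(\mathbb{Z}I)^{G}\cong\mathbb{Z}\mathcal{I}$ is the same as averaging over $G$-orbits of vertices, so that $h$ restricts the symmetric form of $\mathfrak{g}(\widehat{Q})$ to the $G$-fixed part and recovers, up to the valuation of $\Gamma$, the symmetrizable form of $\mathfrak{g}(\Gamma)$. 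Granting this, the standard folding theory for symmetric Kac-Moody algebras (as in Kac's book and in Hubery's \cite{Hu,Hu1}) shows that $h$ carries $\Delta^{+}(\widehat{Q})$ onto $\Delta^{+}(\Gamma)$, sending real roots to real roots and imaginary roots to imaginary roots. Combined with Kac's theorem for $\widehat{Q}$ this gives part (1): the images under $h$ of the dimension vectors of all indecomposable $\widehat{Q}$-representations are exactly the positive roots of $\mathfrak{g}(\Gamma)$.

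For part (2) I would use that $h$ is constant on $G$-orbits of indecomposables, since $h(\mathbf{dim}\,{}^{g}X)=h(g\cdot\mathbf{dim}\,X)=h(\mathbf{dim}\,X)$ by $G$-invariance of the folding. Thus for a fixed positive real root $\alpha$ the set $S_{\alpha}$ of indecomposable $\widehat{Q}$-representations $X$ with $h(\mathbf{dim}\,X)=\alpha$ is $G$-stable. Because folding preserves the real/imaginary dichotomy, every such $X$ has $\mathbf{dim}\,X$ a positive real root $\beta$ of $\widehat{Q}$ lying over $\alpha$, hence equals the unique indecomposable $X_{\beta}$ by Kac's theorem; as the real roots over $\alpha$ form a single $G$-orbit, $S_{\alpha}$ is a single $G$-orbit of indecomposables. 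Finally, from ${}^{g}X_{\beta}\cong X_{g\beta}$ and the uniqueness of real-root indecomposables one gets $\mathrm{Stab}_{G}(\beta)=H_{X}$, so $|S_{\alpha}|=[G:H_{X}]=|G_{X}|$, which is the asserted count.

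The step I expect to be the main obstacle is the identification in the second paragraph: one must verify rigorously that the map $h$ built from the Morita equivalence and the passage through $(\mathbb{Z}I)^{G}$ is literally the folding projection of the valued graph $\Gamma$, and that the $G$-action arising from the $G$-grading of $\mathbbm{k}Q\ast G$ is precisely the diagram automorphism group $\overline{G}$ that realizes this folding on $\mathfrak{g}(\widehat{Q})$. Once $h$ is pinned down as the folding map and the no-loop condition on $\widehat{Q}$ is confirmed, both parts reduce to Kac's theorem together with the combinatorics of root-system folding and the behaviour of the $G$-action on real-root indecomposables.
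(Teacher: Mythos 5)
Your skeleton is close to the paper's, and your endgame for part (2) (uniqueness of the indecomposable over each real root of $\widehat{Q}$, the compatibility ${}^{g}X_{\beta}\cong X_{g\beta}$, and the orbit count $[G:H_{X}]=|G_{X}|$) is exactly how the paper finishes. But the two facts you delegate to ``standard folding theory'' are not standard, and one of them is false as stated. First, the Hubery-style folding of the pair $(\widehat{Q},G)$ does \emph{not} produce $\Gamma$: it produces the dual valued graph $\widehat{\Gamma}$, whose Cartan matrix the paper computes to be $C^{T}$, and its natural folding map is the restriction $(\mathbb{Z}\widehat{I})^{G}\to\mathbb{Z}\mathcal{I}$, whereas $h$ is the orbit-sum map defined on all of $\mathbb{Z}\widehat{I}$ (dimension vectors of indecomposables of $\widehat{Q}$ are generally not $G$-invariant, so Hubery's results on invariant vectors do not apply as a black box). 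Establishing that $h$ intertwines $\mathcal{W}(\Gamma)$ with $C_{G}(\mathcal{W}(\widehat{Q}))$ (Lemma \ref{lem3-11}) forces one to first construct the character-twist action of $G$ on $\mathbbm{k}\widehat{Q}$ and prove the duality $\widehat{\widehat{Q}}=Q$ and $\widehat{C}=C^{T}$ (Section 3.2 of the paper) --- this is where abelianness of $G$ is essential (Remark \ref{rem3-7}) --- and surjectivity of $h$ onto the fundamental (imaginary) roots of $\Gamma$ then still requires the maximal-height argument of Proposition \ref{prop3-12}, adapted from Hubery rather than quoted. Moreover the containment $h(\Delta^{+}_{\widehat{Q}})\subseteq\Delta_{\Gamma}$ is proved in the paper not combinatorially but by passing through $Q$: Kac's theorem for $Q$ together with the correspondence between indecomposable $\widehat{Q}$-representations, indecomposable $\mathbbm{k}Q\ast G$-modules and indecomposable $G$-invariant $Q$-representations (Propositions \ref{prop3-3}, \ref{prop3-9}, Corollary \ref{cor3-4}). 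Your plan never mentions $Q$-representations and has no substitute for this step; note you cannot simply transport simple roots by $\mathcal{W}(\widehat{Q})$, since only elements of $C_{G}(\mathcal{W}(\widehat{Q}))$ descend through $h$.

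Second, your assertion that the folding ``sends real roots to real roots and imaginary roots to imaginary roots'' is wrong, and your part (2) leans on it. Take $Q$ the Kronecker quiver with vertices $x,y$ and two arrows, and let $G=\mathbb{Z}/2\mathbb{Z}$ act trivially on the vertices and by the sign character on one of the two arrows; this is admissible, $\widehat{Q}$ is the $4$-cycle on the vertices $(x,\pm),(y,\pm)$, and $\Gamma$ is the affine $A_{1}$ valued graph with $C=\left(\begin{smallmatrix}2&-2\\-2&2\end{smallmatrix}\right)$. Then $\varepsilon_{(x,+)}+\varepsilon_{(y,+)}$ is a \emph{real} root of $\widehat{Q}$, yet $h\big(\varepsilon_{(x,+)}+\varepsilon_{(y,+)}\big)=\overline{\varepsilon}_{x}+\overline{\varepsilon}_{y}$ is the null root of $\Gamma$, which is \emph{imaginary}. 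What part (2) actually needs is the converse direction: every positive root of $\widehat{Q}$ lying over a real root $\alpha$ of $\Gamma$ is itself real, and these roots form a single $G$-orbit. That statement is true, but it is precisely the nontrivial content of Proposition \ref{prop3-12}: one lifts $\omega'\in\mathcal{W}(\Gamma)$ with $\omega'(\alpha)=\overline{\varepsilon}_{i}$ to $\omega\in C_{G}(\mathcal{W}(\widehat{Q}))$ and observes that the only positive roots with $h$-image $\overline{\varepsilon}_{i}$ are the simple roots $\varepsilon_{i\rho}$, $\rho\in\mbox{irr}\,G_{i}$, which form one $G$-orbit. So your conclusion is correct, but the general principle you invoke to reach it fails, and repairing the argument requires exactly the Weyl-group equivariance and duality machinery that your proposal defers.
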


The proof of this theorem is based on understanding the relationship
among indecomposable $\widehat{Q}$-representations, indecomposable
$\mathbbm{k}Q\ast G$-modules and indecomposable $G$-invariant
$Q$-representations. In the proof, we also need the dual between
$(Q, G)$ and $(\widehat{Q}, G)$. This duality is first discussed in
\cite{RR} for a finite quiver with an automorphism. Here we give a
general and strict proof by the generalized McKay quiver.

\medskip
Next we consider the relationship between Kac-Moody algebra
$\mathfrak{g}(\Gamma)$ and the fixed point subalgebra
$\mathfrak{g}^{\overline{G}}$. The action of $G$ on $\widehat{Q}$
naturally induces an action on the derived algebra $\mathfrak{g}'$
of $\mathfrak{g}$. Let $\Omega=\{g_{1}, g_{2},\cdots,g_{n}\}$ be a
set of generators of $G$. Following from \cite{KW}, we lift $G$ to
$\overline{G}\subseteq\Aut(\mathfrak{g})$ corresponding to a family
of linear maps $\{\psi_{i}:=\psi_{g_{i}} :
\mathfrak{h}/\mathfrak{h}'\rightarrow\mathfrak{c}\mid
g_{i}\in\Omega\}$, where $\mathfrak{c}$ is the center of
$\mathfrak{g}$, $\mathfrak{h}$ and $\mathfrak{h}'$ is the Cartan
subalgebra of $\mathfrak{g}$ and $\mathfrak{g}'$ respectively.
Denote by $C$ the symmetrisable generalized Cartan matrix of the
valued graph $\Gamma$. Then, we can give a realization
$(\mathcal{H}^{\overline{G}}, \{\epsilon_{i}\}, \{h_{i}\})$ of $C$
by the fixed point set $\mathfrak{h}^{\overline{G}}$ of
$\mathfrak{h}$, and we obtain that

\begin{theorem}\label{thm1-2}
For the lifting $\overline{G}$ of $G$ corresponding to $\{\psi_{i} :
\mathfrak{h}/\mathfrak{h}'\rightarrow\mathfrak{c}\mid
g_{i}\in\Omega\}$ such that
$\psi_{i}\big((\mathcal{H}+\mathfrak{h}')/\mathfrak{h}'\big)=0$,
there is a monomorphism
$$\mathfrak{g}(\Gamma)\rightarrow\mathfrak{g}^{\overline{G}}.$$
Moreover this monomorphism endows $\mathfrak{g}^{\overline{G}}$ with
an integrable $\mathfrak{g}(\Gamma)$-module structure under the
adjoint action of $\mathfrak{g}(\Gamma)$. In particular, if $Q$ is a
finite union of Dynkin quivers, then
$\mathfrak{g}(\Gamma)\cong\mathfrak{g}^{\overline{G}}$ as Lie
algebras.
\end{theorem}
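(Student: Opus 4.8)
The plan is to exhibit explicit Chevalley generators of $\mathfrak{g}(\Gamma)$ inside $\mathfrak{g}^{\overline{G}}$ by summing the Chevalley generators of $\mathfrak{g}=\mathfrak{g}(\widehat{Q})$ over each $G$-orbit of vertices. Write $e_{j},f_{j},h_{j}$ ($j\in\widehat{I}$) for the Chevalley generators of $\mathfrak{g}$, and for each vertex $i\in\mathcal{I}$ of $\Gamma$ (that is, an orbit $\mathcal{O}_{i}$ of $G$ on $\widehat{I}$) set $E_{i}=\sum_{j\in\mathcal{O}_{i}}e_{j}$, $F_{i}=\sum_{j\in\mathcal{O}_{i}}f_{j}$ and $H_{i}=\sum_{j\in\mathcal{O}_{i}}h_{j}$, the last being identified with the coroot $h_{i}$ of the realization $(\mathcal{H}^{\overline{G}},\{\epsilon_{i}\},\{h_{i}\})$ of $C$. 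First I would check that these three families lie in $\mathfrak{g}^{\overline{G}}$: since $G$ permutes the vertices inside each orbit, the sums are fixed by the diagram part of the action, and the hypothesis $\psi_{i}\big((\mathcal{H}+\mathfrak{h}')/\mathfrak{h}'\big)=0$ is exactly what kills the central corrections introduced by the lifting of \cite{KW}, so that $\overline{G}$ fixes $E_{i},F_{i},H_{i}$ on the nose.

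The core computation is then to verify that $\{E_{i},F_{i},H_{i}\}_{i\in\mathcal{I}}$ satisfy the Chevalley--Serre relations attached to the symmetrisable Cartan matrix $C$ of $\Gamma$. Here the admissibility of the $G$-action does the essential work: because no arrow joins two vertices of the same orbit, the generators indexed within one orbit have disjoint supports and pairwise commute, so the copies of $\mathfrak{sl}_{2}$ add up cleanly and $[E_{i},F_{j}]=\delta_{ij}H_{i}$. The off-diagonal relations $[H_{i},E_{j}]=c_{ij}E_{j}$ and the Serre relations $(\ad E_{i})^{1-c_{ij}}E_{j}=0$ must reproduce precisely the folded entries $c_{ij}$ of $\Gamma$; this matching of the folding combinatorics with the Lie relations is the step I expect to be the most delicate, in particular keeping track of the orbit sizes, which is what makes $C$ symmetrisable rather than symmetric. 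Once the relations hold, the presentation of a symmetrisable Kac--Moody algebra by generators and relations, together with the realization of $C$ on $\mathcal{H}^{\overline{G}}$, furnishes a Lie algebra homomorphism $\phi:\mathfrak{g}(\Gamma)\rightarrow\mathfrak{g}^{\overline{G}}$.

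To see that $\phi$ is a monomorphism, I would argue that $\Ker\phi$ is a graded ideal of $\mathfrak{g}(\Gamma)$ which meets the Cartan subalgebra $\mathcal{H}^{\overline{G}}$ trivially, since $\phi$ is injective there by the construction of the realization. For a symmetrisable Kac--Moody algebra every ideal is either central, hence contained in $\mathcal{H}^{\overline{G}}$, or contains the derived algebra; an ideal with trivial intersection with the Cartan subalgebra must therefore vanish, giving $\Ker\phi=0$. For the module structure, note that $\mathfrak{g}^{\overline{G}}$ is a subalgebra, hence stable under $\ad\mathfrak{g}(\Gamma)$ through $\phi$, and carries a weight decomposition under $\mathcal{H}^{\overline{G}}$ whose weights lie in the weight lattice of $\Gamma$. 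Integrability reduces to the local nilpotence of $\ad E_{i}$ and $\ad F_{i}$ on $\mathfrak{g}$: each $\ad e_{j}$ and $\ad f_{j}$ is locally nilpotent on the Kac--Moody algebra $\mathfrak{g}(\widehat{Q})$, and within a single orbit these operators commute, so their sums $\ad E_{i}$, $\ad F_{i}$ are again locally nilpotent and restrict as such to the submodule $\mathfrak{g}^{\overline{G}}$.

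Finally, for the Dynkin case I would upgrade the embedding to an isomorphism by showing $\phi$ is surjective. When $Q$ is a finite union of Dynkin quivers, $\widehat{Q}$ is too, so $\mathfrak{g}(\widehat{Q})$ is finite dimensional and semisimple, and the classical theory of folding by a finite group of diagram automorphisms identifies the fixed point subalgebra as the one generated by the orbit sums $E_{i},F_{i}$. Since these already lie in the image of $\phi$, the map $\phi$ is onto, and being injective it is an isomorphism $\mathfrak{g}(\Gamma)\cong\mathfrak{g}^{\overline{G}}$.
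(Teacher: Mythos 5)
Your overall strategy---orbit sums $E_{i},F_{i},H_{i}$ of Chevalley generators, verification of the Chevalley--Serre relations for $C$ using admissibility, injectivity via the fact that a nonzero ideal of a Kac--Moody algebra must meet the Cartan subalgebra, and integrability via diagonalizability plus local nilpotence---is the same as the paper's (Propositions 4.5 and 4.6), and your local-nilpotence argument (a finite sum of pairwise commuting locally nilpotent operators is locally nilpotent) is a clean substitute for the paper's root-string argument. But there is a genuine gap, and it sits exactly where the theorem's hypothesis does its work. You assert that $(\mathcal{H}^{\overline{G}},\{\epsilon_{i}\},\{H_{i}\})$ is a realization of $C$ and use this twice (to define $\phi$ on all of $\mathfrak{h}(\Gamma)$, and to get injectivity on the Cartan), but you never prove it; moreover you attribute the hypothesis $\psi_{i}\big((\mathcal{H}+\mathfrak{h}')/\mathfrak{h}'\big)=0$ to the claim that $\overline{G}$ fixes $E_{i},F_{i},H_{i}$. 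That claim is automatic and needs no hypothesis: these elements lie in the derived algebra $\mathfrak{g}'$, and \emph{every} lifting in the Kac--Wang exact sequence restricts to the same permutation action on $\mathfrak{g}'$; the corrections $\varphi_{i}$ only alter the action on a complement $\mathfrak{h}''$ of $\mathfrak{h}'$ in $\mathfrak{h}$. So a monomorphism $\mathfrak{g}'(\Gamma)\rightarrow(\mathfrak{g}')^{G}$ exists for any lifting whatsoever. What can fail without the hypothesis is the embedding of the \emph{full} algebra $\mathfrak{g}(\Gamma)$: one needs $\mathfrak{h}^{\overline{G}}=\mathcal{H}^{\overline{G}}$ to have dimension $|\mathcal{I}|+r$ ($r$ the corank of $C$), i.e.\ one needs $r$ linearly independent $\overline{G}$-fixed vectors in $\mathcal{H}\cap\mathfrak{h}''$ complementing $(\mathcal{H}\cap\mathfrak{h}')^{G}$, and one needs the $\epsilon_{i}$ to remain linearly independent modulo $\mbox{ann}_{\mathfrak{h}^{\ast}}(\mathcal{H}^{\overline{G}})$. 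The paper's Lemma 4.2 and Proposition 4.3 prove that this holds if and only if $\varphi_{i}(\mathcal{H}\cap\mathfrak{h}'')=0$ (equivalently, the stated condition on the $\psi_{i}$); this is precisely the substantive content your proof skips, and without it your map is only defined on $\mathfrak{g}'(\Gamma)$.

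A second, smaller issue concerns the Dynkin case. You invoke ``classical folding'' to conclude that $\mathfrak{g}^{\overline{G}}$ is generated by the orbit sums, hence that $\phi$ is surjective. The classical statements are for a single diagram automorphism of a simple Lie algebra, whereas here $G$ is an arbitrary finite abelian group which may also permute isomorphic components; one must additionally check that the folding produces $\Gamma$ and not its dual valued graph $\widehat{\Gamma}$ (the $B_{n}$ versus $C_{n}$ distinction visible in the paper's Example 5.2 shows this is not a vacuous worry, though your Serre-relation computation does pin the Cartan matrix down as $C$). The paper avoids the black box by proving directly (Claim 4.7, using the automorphisms $\overline{S}_{i}=\prod_{\rho\in{\rm irr}G_{i}}\overline{r}_{i\rho}$ to move any real root space to a simple one) that $\dim_{\mathbbm{k}}(\mathfrak{g}^{\overline{G}})_{\alpha}=1$ for every real root $\alpha$ of $\Gamma$, and then comparing dimensions, since in the Dynkin case all roots are real and the zero weight space is $\mathcal{H}^{\overline{G}}$. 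Your route can be made to work, but the generation statement for a general admissible abelian group of diagram automorphisms needs either a reference covering that generality or an argument such as the paper's.
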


In the end of this paper, two examples are given to elucidate our
results.

\medskip
Throughout this paper, let $\mathbbm{k}$ denote an algebraic closed
field and $\mathbb{Z}$ denote the set of integers. We denote by $G$
the finite group such that char$\mathbbm{k}\nmid |G|$, denote by
${\bf mod}$-$\Lambda$ the category of (right) $\Lambda$-modules for
any $\mathbbm{k}$-algebra $\Lambda$.

%%%%%%%%%%%%%%%%%%%%%%%%%%%%%%%%%%%%%%%%%%%%%%%%%%%%%%%%%%%%%%%%%%%%%%%%%%%%

\section{Preliminaries} \label{sect-2}

\noindent {\bf 2.1.} Recall that a quiver $Q=(I, E)$ is an oriented
graph with $I$ the set of vertices and $E$ the set of arrows. A
quiver $Q$ is said to be finite if $I$ and $E$ are all finite set.
An arrow in $Q$ is called a loop if its staring vertex coincides
with its terminating vertex. In this paper we only consider a finite
quiver without loops. Therefore we have a path algebra
$\mathbbm{k}Q$  for a quiver $Q$ (see \cite{ASS, ARS}).

A representation $X=(X_{i}, X_{\alpha})$ of the quiver $Q=(I, E)$
consists of a family of $\mathbbm{k}$-vector spaces $X_{i}$ for
$i\in I$, together with a family of $\mathbbm{k}$-linear maps
$X_{\alpha}: X_{i}\rightarrow X_{j}$ for $\alpha: i\rightarrow j$ in
$E$. Given two representations $X$ and $Y$ of $Q$, a morphism
$\varphi: X\rightarrow Y$  is given by a family of
$\mathbbm{k}$-linear maps $\varphi_{i}: X_{i}\rightarrow Y_{i}~(i\in
I)$ such that $\varphi_{j}\circ
X_{\alpha}=Y_{\alpha}\circ\varphi_{i}$ for each arrow $\alpha:
i\rightarrow j$. It is well-known that the category of
representations of $Q$ is naturally equivalent to the category of
$\mathbbm{k}Q$-modules (see \cite{ASS, ARS}). Thus we always
identify a $\mathbbm{k}Q$-module $X$ with a $Q$-representation
$(X_{i}, X_{\alpha})$ in this paper.

\medskip
\noindent {\bf 2.2.} Assume that $\Lambda$ is a
$\mathbbm{k}$-algebra and $G$ acts on $\Lambda$, the skew group
algebra of $\Lambda$ under the action of $G$ is by definition the
$\mathbbm{k}$-algebra whose underlying $\mathbbm{k}$-vector space is
$\Lambda\otimes_{\mathbbm{k}}\mathbbm{k}[G]$ and whose
multiplication is defined by
$$
(\lambda\otimes g)(\lambda'\otimes g')=\lambda g(\lambda')\otimes gg'
$$
for all $\lambda, \lambda'\in \Lambda$ and $g, g'\in G$ (see
\cite{RR}). For convenience, we denote this algebra by $\Lambda\ast
G$, denote the element $\lambda\otimes g$ in $\Lambda\ast G$ by
$\lambda g$.  Note that $\Lambda$ and $\mathbbm{k}[G]$ can be viewed
as subalgebras of $\Lambda\ast G$.

Let $\Lambda=\mathbbm{k}Q$ be the path algebra for a quiver $Q=(I,
E)$. Assume that $G$ acts on $\mathbbm{k}Q$ permuting the set of
primitive idempotents $\{e_{i}\mid i\in I\}$ and stabilizing the
vector space spanned by the arrows. Let $\mathcal {I}$ denote a set
of representatives of class of $I$ under the action of $G$. For any
$i\in I$, let $G_{i}$ denote the subgroup of $G$ stabilizing
$e_{i}$, For each $i\in I$, there exist some $g\in G$ such that
$g^{-1}(i)\in\mathcal{I}$. We fix such a $g$ and denote it by
$\kappa_{i}$. Let $\mathcal {O}_{i}$ be the orbit of $i$ under the
action of $G$. For $(i, j)\in\mathcal {I}^{2}$, $G$ acts on
$\mathcal {O}_{i}\times\mathcal {O}_{j}$ by the diagonal action. A set
of representatives of the classes of this action will be denoted by
${\mathcal F}_{ij}$.

For $i, j\in I$,  we denote by $E_{ij}\subseteq\mathbbm{k}Q$ the
vector space spanned by the arrows from $i$ to $j$ and regard it as
a left and right $\mathbbm{k}[G_{ij}]:=\mathbbm{k}[G_{i}\cap
G_{j}]$-module by restricting the action of $G$. In \cite{De}
Demonet defined the quiver $\widehat{Q}=(\widehat{I}, \widehat{E})$
as follows $$
\widehat{I}=\bigcup_{i\in\mathcal {I}}\{i\}\times\mbox{irr}G_{i},
$$
where $\mbox{irr}G_{i}$ is a set of representatives of isomorphism
classes of irreducible representations of $G_{i}$. The set of arrows
of $\widehat{Q}$ from $(i, \rho)$ to $(j, \sigma)$ is a basis of
$$
\bigoplus_{(i',j')\in {\mathcal F}_{ij}}\Hom_{\mathbbm{k}[G_{i'j'}]}
\left((\rho\cdot\kappa_{i'})|_{G_{i'j'}},
~(\sigma\cdot\kappa_{j'})|_{G_{i'j'}}\otimes_{\mathbbm{k}}E_{i'j'}\right),
$$
where the representation $\rho\cdot\kappa_{i'}$ of $G_{i'}$ is the
same as $\rho$ as a $\mathbbm{k}$-vector space, and
$(\rho\cdot\kappa_{i'})g=\rho\kappa_{i'}g\kappa_{i'}^{-1}$ for each
$g\in G_{i'}=\kappa_{i'}^{-1}G_{i}\kappa_{i'}$. Demonet yielded the
following theorem.

\begin{theorem}\label{thm2-1}
{\rm (see \cite{De})} The category {\bf
mod}-$\mathbbm{k}\widehat{Q}$ is equivalent to the category {\bf
mod}-$\mathbbm{k}Q\ast G$.
\end{theorem}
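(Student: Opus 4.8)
The plan is to realize $\mathbbm{k}\what{Q}$ as the basic algebra of $\mathbbm{k}Q\ast G$ by exhibiting a complete set of primitive orthogonal idempotents together with the Gabriel quiver, and then to use that $\mathbbm{k}Q\ast G$ is hereditary so that its basic algebra is forced to be the path algebra of that quiver. Throughout I would exploit the grading by path length that $\mathbbm{k}Q\ast G$ inherits from $\mathbbm{k}Q$, valid because $G$ permutes the $e_i$ and stabilises the span of the arrows: the degree-zero part is built from the blocks $e_j\mathbbm{k}[G]e_i$ and the degree-one part is spanned by the elements $(\text{arrow})\,g$. First I would check that $\epsilon=\sum_{i\in\mathcal I}e_i$ is a full idempotent, since for every vertex $i'$ one has $e_{i'}=\kappa_{i'}e_{\kappa_{i'}^{-1}(i')}\kappa_{i'}^{-1}$ with $\kappa_{i'}^{-1}(i')\in\mathcal I$ and $\kappa_{i'}\in\mathbbm{k}[G]^{\times}$; hence $\epsilon(\mathbbm{k}Q\ast G)\epsilon$ is Morita equivalent to $\mathbbm{k}Q\ast G$. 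On the diagonal block $e_i(\mathbbm{k}Q\ast G)e_i$ the degree-zero part is $e_i\mathbbm{k}[G]e_i$, and since $e_i g e_i\ne 0$ exactly when $g\in G_i$ this is isomorphic to $\mathbbm{k}[G_i]$. As ${\rm char}\,\mathbbm{k}\nmid\abs{G}$, the algebra $\mathbbm{k}[G_i]$ is semisimple, so its primitive central idempotents refine $e_i$ into orthogonal idempotents indexed by $\irr G_i$, producing exactly the vertex set $\what{I}=\bigcup_{i\in\mathcal I}\{i\}\times\irr G_i$.

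The main work, and the step I expect to be the chief obstacle, is the identification of the arrows. Arrows from $(i,\rho)$ to $(j,\sigma)$ form a basis of the $(\sigma,\rho)$-component of $e_j\big(\mathrm{rad}/\mathrm{rad}^2\big)e_i$; by the grading this component is the degree-one part $\bigoplus_{g\in G}e_j\,E_{g(i),j}\,g$, i.e. arrows from $g(i)$ to $j$ twisted by $g$. I would first reorganise this $G$-module according to the orbits of the diagonal $G$-action on $\mathcal O_i\times\mathcal O_j$, which is precisely what the set $\mathcal F_{ij}$ records, the stabiliser of a representative $(i',j')$ being $G_{i'j'}=G_{i'}\cap G_{j'}$. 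Using the conjugations $\kappa_{i'},\kappa_{j'}$ to transport the $G_{i'}$- and $G_{j'}$-actions back to $G_i$ and $G_j$ (this is the source of the twisted representations $\rho\cdot\kappa_{i'}$ and $\sigma\cdot\kappa_{j'}$), and then extracting the $(\rho,\sigma)$-isotypic part by Schur's lemma together with the Hom--tensor/Frobenius adjunction over the subalgebras $\mathbbm{k}[G_{i'}]$, $\mathbbm{k}[G_{j'}]$ and $\mathbbm{k}[G_{i'j'}]$, I would arrive at Demonet's formula
$$
\bigoplus_{(i',j')\in{\mathcal F}_{ij}}\Hom_{\mathbbm{k}[G_{i'j'}]}
\left((\rho\cdot\kappa_{i'})|_{G_{i'j'}},
~(\sigma\cdot\kappa_{j'})|_{G_{i'j'}}\otimes_{\mathbbm{k}}E_{i'j'}\right).
$$
The delicate point is keeping the left and right module structures straight and checking that the several adjunctions compose correctly; the abundance of subgroups and conjugation twists makes this the most error-prone part of the argument.

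Finally I would argue that $\mathbbm{k}Q\ast G$ is hereditary. Since $Q$ has no loops, $\mathbbm{k}Q$ has global dimension at most one, and $\mathbbm{k}Q\ast G$ is free of rank $\abs{G}$ over $\mathbbm{k}Q$ on both sides; because $\abs{G}$ is invertible in $\mathbbm{k}$, the averaging element $\tfrac{1}{\abs{G}}\sum_{g\in G}g$ splits off $\mathbbm{k}Q$ as a bimodule summand of $\mathbbm{k}Q\ast G$, and the standard (co)restriction comparison then shows $\mathbbm{k}Q\ast G$ again has global dimension at most one. A basic hereditary algebra over the algebraically closed field $\mathbbm{k}$ is the path algebra of its Gabriel quiver (using the path-length grading directly if $Q$ has oriented cycles), so the basic algebra $\epsilon(\mathbbm{k}Q\ast G)\epsilon$ is isomorphic to $\mathbbm{k}\what{Q}$ by the vertex and arrow computations above. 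Combining this with the Morita equivalence of the first paragraph yields ${\bf mod}$-$\mathbbm{k}\what{Q}\simeq{\bf mod}$-$\mathbbm{k}Q\ast G$, as claimed.
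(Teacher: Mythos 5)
The paper never actually proves this statement --- it is quoted from Demonet \cite{De}, and the only trace of that proof the paper later invokes (Section 3.1) is precisely the skeleton of your argument: $e=\sum_{i\in\mathcal{I}}e_{i}$ is a full idempotent, so $\mathbbm{k}Q\ast G$ is Morita equivalent to the corner algebra $e(\mathbbm{k}Q\ast G)e$, which is identified with $\mathbbm{k}\widehat{Q}$ through the semisimple degree-zero blocks $e_{i}\mathbbm{k}[G]e_{i}\cong\mathbbm{k}[G_{i}]$ and the degree-one arrow spaces organized by the orbit sets $\mathcal{F}_{ij}$. Your proposal is therefore correct in outline and follows essentially the same route as the cited proof; the single point to tighten is that for a non-abelian stabilizer $G_{i}$ you must cut by one primitive (not primitive central) idempotent per $\rho\in{\rm irr}\,G_{i}$ to land on a basic algebra --- a central idempotent contributes an entire matrix block --- although this distinction disappears for the abelian groups to which the paper restricts from Section 3 onward.
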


In particular, if the quiver $Q$ is a singular vertex with $m$
loops, we can view $G$ as a subgroup of ${\bf GL}_{m}(\mathbbm{k})$.
Then the quiver $\widehat{Q}$ is just the McKay quiver of $G$. Thus,
we view $\widehat{Q}$ as a generalization of McKay quiver in
general. Furthormore, for any factor algebra $\mathbbm{k}Q/J$, the
shew group algebra $(\mathbbm{k}Q/J)\ast G$ is Morita equivalent to
a factor algebra of $\mathbbm{k}\widehat{Q}$. This implies that the
generalized McKay quiver can realize the Garbiel quiver of
$\Lambda\ast G$ for any basic algebra $\Lambda$.

\medskip
\noindent {\bf 2.3.}  For a quiver $Q=(I, E)$, there is a
corresponding symmetric generalized Cartan matrix $A=(a_{ij})$
indexed by $I$ with entries
$$
a_{ij}=\left\{ \begin{array}{ll}
2,& i=j;\\
-|\{\mbox{edges between vertices } i \mbox{ and } j\}|,& i\neq j.
\end{array}\right.
$$
It is obvious that $A$ is independent of the orientation of $Q$.

Denote by $\mathfrak{g}(Q)$ for the associated symmetric Kac-Moody
algebra corresponding to $A$ with the simple root set
$\Pi=\{\varepsilon_{i}\mid i\in I\}$ and root system $\Delta_{Q}$.
The root lattice $\mathbb{Z}I$ of $Q$ is the free abelian group on
$\Pi$, with the partially order such that $\alpha=\sum_{i\in
I}\alpha_{i}\varepsilon_{i}\ge 0$ if and only if $\alpha_{i}\geq0$
for all $i\in I$. We endow $\mathbb{Z}I$ with a symmetric bilinear
form $(-,-)_{Q}$ via $(\varepsilon_{i},
\varepsilon_{j})_{Q}=a_{ij}$. Then, for each vertex $i\in I$, we
have a reflection $r_{i} :
\alpha\mapsto\alpha-(\alpha,\varepsilon_{i})_{Q}\varepsilon_{i}$.
These reflections generate the Weyl group $\mathcal{W}(Q)$ of $Q$.
The real roots of $Q$ are given by the images under $\mathcal
{W}(Q)$ of the simple roots $\varepsilon_{i}$ and the imaginary
roots are given by $\pm$ the images under $\mathcal {W}(Q)$ of the
fundamental set
$$F_{Q}:=\{\alpha> 0\mid (\alpha,
\varepsilon_{i})_{Q}\leq0 \mbox{ for all } i \mbox{ and the support
 of } \alpha \mbox{ is connected}\}.$$

Suppose that the action of $G$ on path algebra $\mathbbm{k}Q$
permutes the set of primitive idempotents. The action of $G$ is
said to be admissible if no arrow connects to vertices in the same
$G$-orbit. For any quiver $Q$ with an admissible action of $G$, we
can construct a symmetric matrix $B=(b_{ij})$ indexed by $\mathcal
{I}$,  where
$$
b_{ij}=\left\{ \begin{array}{ll}
2|\mathcal{O}_{i}|,& i=j;\\
-|\{\mbox{edges between vertices in } \mathcal{O}_{i} \mbox{ and }
\mathcal{O}_{j}\}|,& i\neq j.
\end{array}\right.
$$
Let $d_{i}:=b_{ii}/2=|\mathcal{O}_{i}|$ and $D=\mbox{diag}(d_{i})$.
Then $C=(c_{ij})=D^{-1}B$ is a symmetrisable generalized Cartan
matrix indexed by $\mathcal {I}$. It is well-known that there is a
unique valued graph $\Gamma$ corresponding to the matrix $C$ by
\cite{DR}. The valued graph $\Gamma$ has the vertex set $\mathcal
{I}$ and an edge $i$----$j$ equipped with the ordered pair
$(|c_{ji}|, |c_{ij}|)$ whenever $c_{ij}\neq0$. Since the action of
$G$ is admissible, $\Gamma$ has no loops. For each connected
component $\Gamma'$ of the graph $\Gamma$, we always take the
representative set $\mathcal{I}$ such that the underlying graph of
the full subquiver generated by the vertices in $\Gamma'$ is
connected.

Denote by $\mathfrak{g}(\Gamma)$ for the associated symmetric
Kac-Moody algebra corresponding to $C$. The simple root set and root
system of $\Gamma$ are denoted by
$\Pi_\Gamma=\{\overline{\varepsilon}_{i}\mid i\in \mathcal{I}\}$ and
$\Delta_{\Gamma}$. Let $\mathbb{Z}\mathcal{I}$ denote the root
lattice of $\Gamma$. There is a symmetric
bilinear form $(-,-)_{\Gamma}$ determined by $B$ on
$\mathbb{Z}\mathcal{I}$ such that $(\overline{\varepsilon}_{i},
\overline{\varepsilon}_{j})_{\Gamma}=b_{ij}$, and a reflection
$\gamma_{i}$ on $\mathbb{Z}\mathcal{I}$ defined by
$$\gamma_{i} :
\alpha\mapsto\alpha-\frac{1}{d_{i}}(\alpha,
\overline{\varepsilon}_{i})_{\Gamma}\overline{\varepsilon}_{i}$$
for each $i\in{\mathcal I}$. These reflections generate the Weyl group
$\mathcal {W}(\Gamma)$ of $\Gamma$. Similarly, we have the real
roots and the imaginary roots associated to $\Gamma$ (see
\cite{Ka}).

%%%%%%%%%%%%%%%%%%%%%%%%%%%%%%%%%%%%%%%%%%%%%%%%%%%%%%%%%%%%%%%%%%%%%%%%%%%%

\section{ Proof of Theorem \ref{thm1-1}} \label{sect-3}
From now on, unless otherwise stated we fix a finite group
$G\subseteq\Aut(\mathbbm{k}Q)$ and assume that the action of $G$ is
admissible. Let $\widehat{Q}$ and $\Gamma$ be the generalized Mckay
quiver and the valued graph corresponding to $(Q, G)$. In this
section, we show that the correspondence between indecomposable
representations of $\widehat{Q}$ and the positive root system of
$\Gamma$.

\medskip
\noindent {\bf 3.1.} The group $G$ acts naturally on the root
lattice $\mathbb{Z}I$, i.e., $g(\varepsilon_{i})=\varepsilon_{g(i)}$
for any $g\in G$. It is easy to check that this action preserves the
partial order $\geq$ and the bilinear form $(-,-)_{Q}$ is
$G$-invariant. Let
$$
(\mathbb{Z}I)^{G}:=\{\alpha\in\mathbb{Z}I\mid g(\alpha)=\alpha
\mbox{ for any }g\in G\}.
$$
There is a canonical bijection
$$
f :\quad
(\mathbb{Z}I)^{G}\longrightarrow\mathbb{Z}\mathcal{I}
$$
given by
$$
f\Big(\sum_{i\in I}\alpha_{i}\varepsilon_{i}\Big)=\sum_{i\in
\mathcal {I}}\alpha_{i}\overline{\varepsilon}_{i}.
$$
The admissibility of the
action of $G$ implies that the reflections $r_{i}$ and $r_{j}$
commute whenever $i$ and $j$ lie in the same $G$-orbit. Therefore
the element
$$
S_{i} :=\prod_{i'\in\mathcal {O}_{i}} r_{i'}\in
\mathcal {W}(Q)
$$
is well-defined for any $i\in\mathcal {I}$. Note that $g\circ r_{i}=
r_{g(i)}\circ g$ for any $g\in G$, we have $S_{i}\in C_{G}(\mathcal
{W}(Q))$, the set of elements in the Weyl group commuting with the
action of $G$. By induction on the length of the element in
$C_{G}(\mathcal {W}(Q))$, it is easy to check that $C_{G}(\mathcal
{W}(Q))$ is generated by $S_{i}$, $i\in\mathcal {I}$.

\medskip
Similar to \cite[Lemma 3]{Hu1}, we have

\begin{lemma}\label{lem3-1}
For any $\alpha, \beta\in (\mathbb{Z}I)^{G}$,
we have

(1) $(\alpha, \beta)_{Q}=(f(\alpha), f(\beta))_{\Gamma}$;

(2) $f(S_{i}(\alpha))=\gamma_{i}(f(\alpha))\in
\mathbb{Z}\mathcal{I}$ for $i\in \mathcal {I}$.

(3) The map $\gamma_{i}\mapsto S_{i}$ induces a group isomorphism
$\mathcal{W}(\Gamma)\stackrel{\simeq}{\longrightarrow}C_{G}(\mathcal
{W}(Q))$.
\end{lemma}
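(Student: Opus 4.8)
The plan is to treat the three parts in order; each reduces to a short orbit-wise computation except for the injectivity in (3), which is the only genuine obstacle.

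For (1), I would expand both sides in the simple roots and group the sum over $G$-orbits. Writing $\alpha=\sum_{i\in I}\alpha_{i}\varepsilon_{i}$ and $\beta=\sum_{j\in I}\beta_{j}\varepsilon_{j}$ with the coefficients constant on orbits (since $\alpha,\beta\in(\mathbb{Z}I)^{G}$), the form $(\alpha,\beta)_{Q}$ becomes $\sum_{p,q\in\mathcal{I}}\alpha_{p}\beta_{q}\big(\sum_{i\in\mathcal{O}_{p},\,j\in\mathcal{O}_{q}}a_{ij}\big)$. The whole point is then the identity $\sum_{i\in\mathcal{O}_{p},\,j\in\mathcal{O}_{q}}a_{ij}=b_{pq}$: for $p\neq q$ the orbits are disjoint and the inner sum counts, with sign, all edges between $\mathcal{O}_{p}$ and $\mathcal{O}_{q}$, while for $p=q$ admissibility kills every off-diagonal term (so $a_{ij}=0$ for distinct $i,j$ in one orbit) and the diagonal contributes $2|\mathcal{O}_{p}|=b_{pp}$. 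Matching this against $(f(\alpha),f(\beta))_{\Gamma}=\sum_{p,q}\alpha_{p}\beta_{q}b_{pq}$ finishes it.

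For (2), I first record that the reflections $r_{i'}$, $i'\in\mathcal{O}_{i}$, mutually commute with orthogonal roots, so an easy induction gives $S_{i}(\alpha)=\alpha-\sum_{i'\in\mathcal{O}_{i}}(\alpha,\varepsilon_{i'})_{Q}\varepsilon_{i'}$. For $\alpha\in(\mathbb{Z}I)^{G}$ the numbers $(\alpha,\varepsilon_{i'})_{Q}$ are constant over the orbit (by $G$-invariance of the form and of $\alpha$), all equal to $(\alpha,\varepsilon_{i})_{Q}$, whence $S_{i}(\alpha)=\alpha-(\alpha,\varepsilon_{i})_{Q}\,\omega_{i}$ with $\omega_{i}:=\sum_{i'\in\mathcal{O}_{i}}\varepsilon_{i'}\in(\mathbb{Z}I)^{G}$; in particular $S_{i}$ preserves $(\mathbb{Z}I)^{G}$. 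Applying $f$ and using $f(\omega_{i})=\overline{\varepsilon}_{i}$ gives $f(S_{i}(\alpha))=f(\alpha)-(\alpha,\varepsilon_{i})_{Q}\overline{\varepsilon}_{i}$. It remains to identify the scalar: by part (1), $(f(\alpha),\overline{\varepsilon}_{i})_{\Gamma}=(f(\alpha),f(\omega_{i}))_{\Gamma}=(\alpha,\omega_{i})_{Q}=|\mathcal{O}_{i}|(\alpha,\varepsilon_{i})_{Q}=d_{i}(\alpha,\varepsilon_{i})_{Q}$, so $(\alpha,\varepsilon_{i})_{Q}=\tfrac{1}{d_{i}}(f(\alpha),\overline{\varepsilon}_{i})_{\Gamma}$ and the right-hand side is exactly $\gamma_{i}(f(\alpha))$. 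This also shows the image lies in $\mathbb{Z}\mathcal{I}$.

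For (3), part (2) says precisely that under the bijection $f$ the operator $S_{i}|_{(\mathbb{Z}I)^{G}}$ is conjugate to $\gamma_{i}$. Since every element of $C_{G}(\mathcal{W}(Q))$ preserves $(\mathbb{Z}I)^{G}$, restriction followed by conjugation by $f$ defines a group homomorphism $\Phi:C_{G}(\mathcal{W}(Q))\to GL(\mathbb{Z}\mathcal{I})$ with $\Phi(S_{i})=\gamma_{i}$. As the $S_{i}$ generate the source and the $\gamma_{i}$ generate $\mathcal{W}(\Gamma)$, $\Phi$ is a surjection onto $\mathcal{W}(\Gamma)$ whose inverse on generators is $\gamma_{i}\mapsto S_{i}$. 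The one thing needing genuine proof is injectivity, namely that no nontrivial $w\in C_{G}(\mathcal{W}(Q))$ fixes $(\mathbb{Z}I)^{G}$ pointwise. I expect this to be the main obstacle and would settle it by a positivity argument: if $w\neq 1$ then some simple root is made negative, say $w\varepsilon_{i}<0$; since $w$ commutes with $G$ and $G$ permutes the simple roots, hence preserves positivity, we get $w\varepsilon_{i'}=g(w\varepsilon_{i})<0$ for every $i'=g(i)\in\mathcal{O}_{i}$. Then $w\omega_{i}=\sum_{i'\in\mathcal{O}_{i}}w\varepsilon_{i'}$ is a sum of negative roots, hence negative, whereas $\omega_{i}\in(\mathbb{Z}I)^{G}$ is positive, so $w\omega_{i}\neq\omega_{i}$ and $w$ cannot fix $(\mathbb{Z}I)^{G}$ pointwise. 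Therefore $\Phi$ is an isomorphism and $\gamma_{i}\mapsto S_{i}$ is the asserted isomorphism $\mathcal{W}(\Gamma)\xrightarrow{\sim}C_{G}(\mathcal{W}(Q))$.
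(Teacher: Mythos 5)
Your proof is correct, and parts (1) and (2) coincide with the paper's own argument: the orbit identity $\sum_{i'\in\mathcal{O}_{p},\,j'\in\mathcal{O}_{q}}a_{i'j'}=b_{pq}$ and the constancy of $(\alpha,\varepsilon_{i'})_{Q}$ along a $G$-orbit are exactly the steps taken there (you merely make the role of admissibility in the $p=q$ case explicit, which the paper leaves tacit). The genuine divergence is in (3). The paper's entire proof is the assertion that, by (2), the $\gamma_{i}$ and $S_{i}$ ``satisfy the same relations''; but (2) by itself only transports relations in one direction --- a word in the $S_{i}$ equal to $1$ in $\mathcal{W}(Q)$ gives, after restriction to $(\mathbb{Z}I)^{G}$ and conjugation by $f$, the same word in the $\gamma_{i}$ equal to $1$ --- so to go the other way one must either verify the Coxeter relations $(S_{i}S_{j})^{m_{ij}}=1$ by hand (the classical folding computation) or know that $C_{G}(\mathcal{W}(Q))$ acts faithfully on the fixed lattice $(\mathbb{Z}I)^{G}$. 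You isolate exactly this faithfulness as the missing point and prove it: a nontrivial $w$ in the Weyl group sends some simple root $\varepsilon_{i}$ to a negative root (a standard fact, see \cite{Ka}), commutation with $G$ propagates the negativity over the whole orbit, and hence $w(\omega_{i})<0\neq\omega_{i}$ for $\omega_{i}=\sum_{i'\in\mathcal{O}_{i}}\varepsilon_{i'}\in(\mathbb{Z}I)^{G}$. With faithfulness in hand, your restriction homomorphism $\Phi:C_{G}(\mathcal{W}(Q))\rightarrow\mathrm{GL}(\mathbb{Z}\mathcal{I})$ is injective, its image is $\mathcal{W}(\Gamma)$ by the generation statement in the paper's preamble, and $\Phi^{-1}$ is the asserted isomorphism $\gamma_{i}\mapsto S_{i}$. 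What your route buys is that no case-by-case check of Coxeter relations is ever needed, since the faithful action does all the work; what it costs is reliance on the length/positivity theory of Weyl groups, which a completed version of the paper's relation-checking route would avoid. Either way, your write-up supplies the one step the paper only asserts.
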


\begin{proof}  (1) Set
$\varepsilon^{i}:=\sum_{i'\in\mathcal{O}_{i}}\varepsilon_{i'}$. Then
$\{\varepsilon^{i}\mid i\in \mathcal{I}\}$ is a basis of
$(\mathbb{Z}I)^{G}$. Since
$$
(\varepsilon^{i},
\varepsilon^{j})_{Q}=\sum_{i'\in\mathcal{O}_{i}\atop
j'\in\mathcal{O}_{j}}a_{i'j'}=b_{ij}=(\overline{\varepsilon}_{i},
\overline{\varepsilon}_{j})_{\Gamma}
$$
for any $i, j\in\mathcal{I}$, (1) is obvious.

(2) Since the bilinear form $(-,-)_{Q}$ is $G$-invariant, we have
$$
S_{i}(\alpha)=\alpha-\sum_{i'\in\mathcal{O}_{i}}(\alpha,
\varepsilon_{i'})_{Q}\varepsilon_{i'}=\alpha-\sum_{i'\in\mathcal{O}_{i}}
\frac{1}{d_{i}}\big(\alpha,
\sum_{j\in\mathcal{O}_{i}}\varepsilon_{j}\big)_{Q}\varepsilon_{i'}
=\alpha-\frac{1}{d_{i}}(f(\alpha),
\overline{\varepsilon}_{i'})_{\Gamma}\varepsilon^{i}
$$
by (1). We obtain that
$$
f(S_{i}(\alpha))=f(\alpha)-\frac{1}{d_{i}}(f(\alpha),
\overline{\varepsilon}_{i'})_{\Gamma}\overline{\varepsilon}_{i}
=\gamma_{i}(f(\alpha)).
$$

(3) By the result of (2), it is easy to check that $\gamma_{i}$ and
$S_{i}$ satisfy the same relations. Thus
$\mathcal{W}(\Gamma)\cong C_{G}(\mathcal {W}(Q))$.
\end{proof}

For a given $\alpha\in \mathbb{Z}I$, let $H_{\alpha}=\{g\in G \mid
g(\alpha)=\alpha\}$. Then $H_{\alpha}$ is a subgroup of $G$. We
denote by $G_{\alpha}$ a complete set of left coset representatives
of $H_{\alpha}$ in $G$, and let
$$
\Sigma(\alpha):=\sum_{g\in G_{\alpha}}g(\alpha).
$$
Obviously, $\Sigma(\alpha)\in(\mathbb{Z}I)^{G}$ and we have

\begin{lemma}\label{lem3-2}
The map $\alpha\mapsto f(\sigma(\alpha))$ induces a surjection
$\pi : \Delta_{Q}\rightarrow \Delta_{\Gamma}$. Moreover, if
$f(\sigma(\alpha))$ is a real root, $\alpha$ has to be real and
unique up to $G$-orbit.
\end{lemma}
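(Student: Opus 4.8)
The plan is to handle the real and imaginary roots of $\Gamma$ separately, matching each against the corresponding type of root of $Q$ through the two correspondences of Lemma~\ref{lem3-1}. Throughout one reduces to positive roots, since $\Sigma(-\alpha)=-\Sigma(\alpha)$ and $\Sigma$ is constant on $G$-orbits. For the real roots, every positive real root of $\Gamma$ has the form $w(\overline{\varepsilon}_{i})$ with $w=\gamma_{j_{1}}\cdots\gamma_{j_{k}}\in\mathcal{W}(\Gamma)$ and $i\in\mathcal{I}$; let $\widetilde{w}=S_{j_{1}}\cdots S_{j_{k}}\in C_{G}(\mathcal{W}(Q))$ be its image under Lemma~\ref{lem3-1}(3). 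Since $\widetilde{w}$ commutes with $G$ one checks $H_{\widetilde{w}(\varepsilon_{i})}=G_{i}=H_{\varepsilon_{i}}$, so $\Sigma(\widetilde{w}(\varepsilon_{i}))=\widetilde{w}(\varepsilon^{i})$, and then Lemma~\ref{lem3-1}(2), applied $k$ times and using that each $S_{j}$ preserves $(\mathbb{Z}I)^{G}$, gives $\pi(\widetilde{w}(\varepsilon_{i}))=f(\widetilde{w}(\varepsilon^{i}))=w(\overline{\varepsilon}_{i})$. As $\widetilde{w}(\varepsilon_{i})\in\Delta_{Q}$ is real, this shows every real root of $\Gamma$ lies in the image of $\pi$.

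For the imaginary roots, note that $\pi$ intertwines the action of $C_{G}(\mathcal{W}(Q))$ with that of $\mathcal{W}(\Gamma)$ (immediate from Lemma~\ref{lem3-1}(2) and the commutativity with $G$), so it suffices to realise those $\beta$ in the fundamental set $F_{\Gamma}=\{\beta>0\mid \tfrac{1}{d_{i}}(\beta,\overline{\varepsilon}_{i})_{\Gamma}\le 0 \text{ for all }i,\ \supp\beta \text{ connected}\}$. Put $\gamma=f^{-1}(\beta)\in(\mathbb{Z}I)^{G}$. The decisive point is that $G$ acts transitively on the connected components of $\supp\gamma$: otherwise, grouping the components into their $G$-orbits splits $\gamma$ as $\gamma_{1}+\gamma_{2}$ with $\gamma_{1},\gamma_{2}\in(\mathbb{Z}I)^{G}$ nonzero and of disjoint support, whence $\supp f(\gamma_{1})$ and $\supp f(\gamma_{2})$ partition $\supp\beta$; an edge of $\Gamma$ between the two parts would reflect an edge of $Q$ between the relevant orbits $\mathcal{O}_{i}\subseteq\supp\gamma_{1}$ and $\mathcal{O}_{j}\subseteq\supp\gamma_{2}$, forcing them into one component and contradicting the splitting, so $\supp\beta$ would be disconnected. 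Given transitivity, choosing a component $P$ and setting $\alpha=\gamma|_{P}$ yields $\Sigma(\alpha)=\gamma$; moreover $\supp\alpha=P$ is connected, and by Lemma~\ref{lem3-1}(1) together with $G$-invariance $(\alpha,\varepsilon_{v})_{Q}=\tfrac{1}{d_{[v]}}(\beta,\overline{\varepsilon}_{[v]})_{\Gamma}\le0$ for $v\in P$, while $(\alpha,\varepsilon_{v})_{Q}\le0$ is automatic for $v\notin P$. Thus $\alpha\in F_{Q}$ is an imaginary root with $\pi(\alpha)=f(\gamma)=\beta$, completing surjectivity.

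Running the same two descriptions in reverse should give the containment $\pi(\Delta_{Q})\subseteq\Delta_{\Gamma}$ needed for $\pi$ to land in $\Delta_{\Gamma}$, and I expect the genuinely delicate step to be precisely this well-definedness when $\alpha$ is real. The obstacle is that a real root of $Q$ may fold to an \emph{imaginary} root of $\Gamma$ (for instance a real root of $\widetilde{A}_{3}$ folds, under the order-two swap, to the null root of $\widetilde{A}_{1}$), so one cannot argue by preservation of the bilinear form alone and must again control the components of $\supp\Sigma(\alpha)$ via the connectivity mechanism above.

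Finally, for the second assertion suppose $\pi(\alpha)=\beta$ is real. Writing $\beta=w(\overline{\varepsilon}_{i})$ and replacing $\alpha$ by $\widetilde{w}^{-1}(\alpha)$ — again a root, real iff $\alpha$ is, with $\pi(\widetilde{w}^{-1}(\alpha))=w^{-1}(\beta)=\overline{\varepsilon}_{i}$ since $\widetilde{w}^{-1}$ commutes with $G$ — I reduce to $\Sigma(\alpha)=\varepsilon^{i}=\sum_{i'\in\mathcal{O}_{i}}\varepsilon_{i'}$. As $\varepsilon^{i}>0$ forces $\alpha>0$, each summand $g(\alpha)$ of $\Sigma(\alpha)$ is a positive root with $g(\alpha)\le\varepsilon^{i}$, hence supported on $\mathcal{O}_{i}$ with all coefficients at most $1$; but admissibility makes the full subquiver on $\mathcal{O}_{i}$ edgeless, so its only roots are the $\varepsilon_{i'}$. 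Therefore $g(\alpha)=\varepsilon_{i'}$ for some $i'$, so $\alpha$ is real and lies in the $G$-orbit of $\varepsilon_{i}$; transporting back by $\widetilde{w}$ shows the preimage of $\beta$ is exactly the single $G$-orbit of $\widetilde{w}(\varepsilon_{i})$, which gives both reality and uniqueness up to $G$-orbit.
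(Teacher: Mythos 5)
Your surjectivity argument and your proof of the ``moreover'' clause are both correct. The computation $\Sigma(\widetilde{w}(\varepsilon_{i}))=\widetilde{w}(\varepsilon^{i})$ for real roots, the explicit verification that $G$ acts transitively on the connected components of $\supp\gamma$ when $f(\gamma)\in F_{\Gamma}$ (a point the paper passes over silently when it asserts $\Sigma(\alpha)=\gamma$), and the reduction of the uniqueness statement to $\Sigma(\alpha)=\varepsilon^{i}$, where admissibility forces every summand $g(\alpha)$ to be some $\varepsilon_{i'}$, all check out; the last of these is arguably cleaner than the paper's treatment. The genuine gap is well-definedness: you never prove that $f(\Sigma(\alpha))\in\Delta_{\Gamma}$ for every $\alpha\in\Delta_{Q}$; you only say that running your two descriptions ``in reverse should give the containment.'' This is not a routine verification, and your own observation explains why: since a real root of $Q$ can fold to an imaginary root of $\Gamma$ (your $\widetilde{A}_{3}\to\widetilde{A}_{1}$ example is correct --- $\Sigma(\varepsilon_{1}+\varepsilon_{2})$ maps to the null root), the real/imaginary dichotomy cannot be matched type by type. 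Worse, neither case of $\alpha$ can be normalized with the tools you have: to bring a root $\alpha$ of $Q$ to a simple root or into $F_{Q}$ one generally needs the full Weyl group $\mathcal{W}(Q)$, whereas $\pi$ only intertwines the subgroup $C_{G}(\mathcal{W}(Q))\cong\mathcal{W}(\Gamma)$, so ``controlling the components of $\supp\Sigma(\alpha)$'' has no obvious way to get started.

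The missing idea --- and the actual content of the paper's proof --- is to analyse the whole $\mathcal{W}(\Gamma)$-orbit of $\beta=f(\Sigma(\alpha))$ downstairs rather than trying to normalize $\alpha$ upstairs. Every element of that orbit has the form $f(\Sigma(\omega(\alpha)))$ with $\omega\in C_{G}(\mathcal{W}(Q))$, hence inherits two properties from $\omega(\alpha)\in\Delta_{Q}$: connected support and a definite sign. Kac's trichotomy then applies: if the orbit consists entirely of positive (resp.\ negative) elements, its element of minimal (resp.\ maximal) height lies in $F_{\Gamma}$ (resp.\ $-F_{\Gamma}$), so $\beta$ is an imaginary root; otherwise the orbit contains $m\overline{\varepsilon}_{i}$ for some $m>0$, and then upstairs $\supp\,\omega(\alpha)\subseteq\mathcal{O}_{i}$ is a single vertex by admissibility, so $\omega(\alpha)=m\varepsilon_{i'}$, and $\omega(\alpha)\in\Delta_{Q}$ forces $m=1$, making $\beta$ a real root. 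This orbit argument is what you need to add; note that it simultaneously yields the ``moreover'' clause, so once it is in place your separate (and correct) uniqueness argument becomes an alternative rather than a necessity.
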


\begin{proof} First, for any $\omega\in C_{G}(\mathcal {W}(Q))$, we have
$H_{\alpha}=H_{\omega(\alpha)}$ since the action of $C_{G}(\mathcal
{W}(Q))$ and the action of $G$ on $\mathbb{Z}I$ is commutative. Thus
we can take $G_{\alpha}=G_{\omega(\alpha)}$ for any $\omega\in
C_{G}(\mathcal {W}(Q))$.

We now consider $\beta:=\omega'(f(\Sigma(\alpha)))$ with
$\omega'\in\mathcal{W}(\Gamma)$. Let $\omega\in C_{G}(\mathcal
{W}(Q))$ be the element corresponding to $\omega'$ under the
isomorphism in Lemma \ref{lem3-1}(3). Then
$\beta=f(\omega(\Sigma(\alpha)))=f(\Sigma(\omega(\alpha)))$ has
connected support since the support of $\alpha$ is connected. It is
either positive or negative since $\Sigma$ preserves the partial
order $\geq$. Denote by $\mathcal{O}_{\beta}$ the orbit of $\beta$
under the action of $\mathcal{W}(\Gamma)$. Then

\begin{itemize}
\item if all elements in $\mathcal{O}_{\beta}$ are positive,
the element in $\mathcal{O}_{\beta}$ with minimal height lies in
$F_{\Gamma}$;
\item if all elements in $\mathcal{O}_{\beta}$ are negative,
the element in $\mathcal{O}_{\beta}$ with maximal height lies in
$-F_{\Gamma}$;
\item otherwise, there exists a positive number $m$ and
$i\in\mathcal{I}$ such that
$m\overline{\varepsilon}_{i}\in\mathcal{O}_{\beta}$.
\end{itemize}
In the last case, we have $\omega(\alpha)=m\varepsilon_{i'}$ for
some $\omega\in\mathcal{W}(Q)$, $i'\in\mathcal {O}_{i}$. But
$\omega(\alpha)\in\Delta_{Q}$, we must have $m=1$ and so that
$\overline{\varepsilon}_{i}\in\mathcal{O}_{\beta}$. Thus $\beta$ is
a root of $\Gamma$ and $\pi : \Delta_{Q}\rightarrow\Delta_{\Gamma}$,
$\alpha\mapsto f(\Sigma(\alpha))$ is well-defined.

Now, we prove that the map $\pi$ is surjective. Here we only need to
show that $F_{\Gamma}$ lies in the image of $\pi$. For any $\beta\in
F_{\Gamma}$, $\gamma:=f^{-1}(\beta)$ satisfies
$$
0\geq(\beta, \overline{\varepsilon}_{i})_{\Gamma}=(\gamma,
\Sigma(\varepsilon_{i'}))_{Q}=\sum_{g\in
G_{\varepsilon_{i'}}}(\gamma, g(\varepsilon_{i'}))_{Q}=d_{i}(\gamma,
\varepsilon_{i'})_{Q}
$$
for any $i\in\mathcal{I}$ and $i'\in\mathcal{O}_{i}$. Thus any
connected component $\alpha$ of $\gamma$ lies in $F_{Q}$ and
$\Sigma(\alpha)=\gamma$. By Lemma \ref{lem3-1}(3) we get the proof.
\end{proof}

For any $g\in G$, we have an additive autoequivalence functor
$$
\begin{aligned}F_{g}:\quad\mbox{{\bf
mod}-}\mathbbm{k}Q\quad &\longrightarrow \quad\mbox{{\bf
mod}-}\mathbbm{k}Q \\
M  \qquad&\mapsto  \qquad{^{g}M}
\end{aligned}
$$
where the $\mathbbm{k}Q$-module ${^{g}M}$ is defined by taking the
same underlying vector space as $M$ with the action
$m\cdot\lambda=mg(\lambda)$ for $m\in M$ and
$\lambda\in\mathbbm{k}Q$, and $F_{g}(\psi)=\psi$ for any
homomorphism $\psi : M\rightarrow N$. Let $(M_{i},
~M_{\alpha})_{i\in I, \alpha\in E}$ be the $Q$-representation
corresponding to $M$. Then the $Q$-representation $^{g}M$ is
$(^{g}X_{i}, {^{g}X}_{\alpha})_{i\in I, \alpha\in E}$, where
$^{g}X_{i}=X_{g(i)}$ and $^{g}X_{\alpha}=\sum_{\beta}\zeta_{\beta}
X_{\beta}$ if $g(\alpha)=\sum_{\beta}\zeta_{\beta}\beta$, $\beta\in
E$, $\zeta_{\beta}\in\mathbbm{k}$.

A $\mathbbm{k}Q$-module $M$ is said to be $G$-invariant if
$F_{g}(M)\cong M$ for any $g\in G$, a $G$-invariant
$\mathbbm{k}Q$-module $M$ is said to be indecomposable $G$-invariant
if $M$ is non-zero and $M$ cannot be written as a direct sum of two
non-zero $G$-invariant $\mathbbm{k}Q$-modules. It is known that
$\mathbbm{k}Q$-module $M$ has a $\mathbbm{k}Q\ast G$-module
structure if and only if $M$ is $G$-invariant, and the full
subcategory of $\mbox{{\bf mod}-}\mathbbm{k}Q$ generated by the
$G$-invariant $\mathbbm{k}Q$-module is also a Krull-Schmidt category
(see \cite{HY}).

\medskip
For a given $\mathbbm{k}Q$-module $M$, we let $H_{M}:=\{g\in G\mid
F_{g}(M)\cong M\}$ and $G_{M}$ be a complete set of left coset
representatives of $H_{M}$ in $G$. Then for each
$\mathbbm{k}Q$-module $M$, we  define a $G$-invariant
$\mathbbm{k}Q$-module
$$
\sum(M):=\bigoplus_{g\in G_{M}}{^{g}M}.
$$
It is easy to see that each $G$-invariant $\mathbbm{k}Q$-module has
this form. For each $\mathbbm{k}Q$-module $M$, we denote the
dimension vector of $M$ by the linear combination ${\bf dim}
X:=\sum_{i\in I}\dim X_{i}\,\varepsilon_{i}\in\mathbb{Z}I$. It
is easy to see that ${\bf dim}F_{g}(M)=g({\bf dim}M)$ for any $g\in
G$ and $M\in\mbox{{\bf mod}-}\mathbbm{k}Q$.

\begin{proposition} \label{prop3-3}
For any indecomposable $G$-invariant $\mathbbm{k}Q$-module $M$,
$f({\bf dim}M)$ is a root of $\Gamma$. Moreover, for any positive
real root $\beta$ of $\Gamma$, there is a unique {\rm(}up to
isomorphism{\rm )} indecomposable $G$-invariant
$\mathbbm{k}Q$-module $M$ with $\frac{1}{2}({\bf dim}M, {\bf
dim}M)_{Q}$ indecomposable summands $($as $\mathbbm{k}Q$-module$)$
such that $f({\bf dim}M)=\beta$.
\end{proposition}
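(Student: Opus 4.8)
The plan is to realize every indecomposable $G$-invariant $\mathbbm{k}Q$-module as $M=\sum(N)=\bigoplus_{g\in G_{N}}{}^{g}N$ for some indecomposable $\mathbbm{k}Q$-module $N$ (this is possible by the Krull--Schmidt property of the category of $G$-invariant modules, see \cite{HY}), and then to transport dimension vectors to $\Gamma$ through the surjection $\pi$ of Lemma \ref{lem3-2}. Writing $\alpha:={\bf dim}\,N$, which is a positive root of $Q$ by Kac's theorem, I would first record the identity
$$
{\bf dim}\,M=\sum_{g\in G_{N}}g(\alpha)=[H_{\alpha}:H_{N}]\,\Sigma(\alpha),
$$
valid because $H_{N}\subseteq H_{\alpha}$ and the translates $g(\alpha)$ repeat along the cosets of $H_{N}$ in $H_{\alpha}$; consequently $f({\bf dim}\,M)=[H_{\alpha}:H_{N}]\,\pi(\alpha)$. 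I would also isolate the auxiliary fact that $H_{N}=H_{\alpha}$ whenever $\alpha$ is real: then $N$ is the unique indecomposable with dimension vector $\alpha$, so any $g\in H_{\alpha}$ sends $N$ to an indecomposable ${}^{g}N$ of dimension vector $g(\alpha)=\alpha$, forcing ${}^{g}N\cong N$ and $g\in H_{N}$.

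For part (1) I would distinguish the two types of $\pi(\alpha)$. If $\pi(\alpha)$ is real, then $\alpha$ is real by Lemma \ref{lem3-2}, so $H_{N}=H_{\alpha}$ and ${\bf dim}\,M=\Sigma(\alpha)$, giving $f({\bf dim}\,M)=\pi(\alpha)\in\Delta_{\Gamma}$. If $\pi(\alpha)$ is imaginary, then $f({\bf dim}\,M)=[H_{\alpha}:H_{N}]\,\pi(\alpha)$ is a positive integral multiple of an imaginary root and hence is again an imaginary root of $\Gamma$; this settles the first assertion.

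For the existence half of part (2), let $\beta$ be a positive real root of $\Gamma$. Surjectivity of $\pi$ gives $\alpha\in\Delta_{Q}$ with $\pi(\alpha)=\beta$, and since $\beta$ is real, $\alpha$ is a positive real root that is unique up to the $G$-orbit (Lemma \ref{lem3-2}). Taking $N$ to be the unique indecomposable with ${\bf dim}\,N=\alpha$ and $M:=\sum(N)$, the identity above together with $H_{N}=H_{\alpha}$ yields $f({\bf dim}\,M)=\beta$. For uniqueness, suppose $M'=\sum(N')$ also satisfies $f({\bf dim}\,M')=\beta$, and put $\alpha'={\bf dim}\,N'$. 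Then $[H_{\alpha'}:H_{N'}]\,\pi(\alpha')=\beta$ is real, which forces $\pi(\alpha')$ to be real and the coefficient to be $1$, so $\pi(\alpha')=\beta$; by Lemma \ref{lem3-2} $\alpha'$ lies in the $G$-orbit of $\alpha$, say $\alpha'=g(\alpha)$, and by uniqueness of real indecomposables $N'\cong{}^{g}N$, whence $M'\cong M$.

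The step I expect to be the main obstacle is the count of indecomposable summands. On one side, the number of $\mathbbm{k}Q$-indecomposable summands of $M=\bigoplus_{g\in G_{N}}{}^{g}N$ equals $|G_{N}|=[G:H_{N}]=[G:H_{\alpha}]$, the cardinality of the $G$-orbit of $\alpha$. On the other side, Lemma \ref{lem3-1}(1) and the invariance of $(-,-)_{\Gamma}$ under $\mathcal{W}(\Gamma)$ give $({\bf dim}\,M,{\bf dim}\,M)_{Q}=(\beta,\beta)_{\Gamma}=(\overline{\varepsilon}_{i},\overline{\varepsilon}_{i})_{\Gamma}=2d_{i}$ for the index $i$ with $\beta\in\mathcal{W}(\Gamma)\overline{\varepsilon}_{i}$, so $\tfrac12({\bf dim}\,M,{\bf dim}\,M)_{Q}=d_{i}=|\mathcal{O}_{i}|$. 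To reconcile the two, I would choose $\omega\in C_{G}(\mathcal{W}(Q))$ corresponding under the isomorphism of Lemma \ref{lem3-1}(3) to a Weyl element carrying $\beta$ to $\overline{\varepsilon}_{i}$; then $f(\Sigma(\omega^{-1}(\alpha)))=\overline{\varepsilon}_{i}=f(\varepsilon^{i})$, and injectivity of $f$ on $(\mathbb{Z}I)^{G}$ forces $\Sigma(\omega^{-1}(\alpha))=\varepsilon^{i}=\sum_{i'\in\mathcal{O}_{i}}\varepsilon_{i'}$. Hence the $G$-orbit of $\omega^{-1}(\alpha)$ is exactly $\{\varepsilon_{i'}\mid i'\in\mathcal{O}_{i}\}$, of size $|\mathcal{O}_{i}|$, and since $\omega$ commutes with the $G$-action the $G$-orbit of $\alpha$ has the same size $|\mathcal{O}_{i}|$, which finishes the count. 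The subtlety to watch is that the reduction of $\alpha$ must be carried out by a $G$-equivariant Weyl element (an element of $C_{G}(\mathcal{W}(Q))$), not an arbitrary element of $\mathcal{W}(Q)$, precisely so that the $G$-orbit size is preserved; this is what makes Lemma \ref{lem3-1}(3) indispensable here.
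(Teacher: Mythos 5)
Your proposal is correct and is essentially the paper's own argument: the same decomposition $M=\sum(N)$, the same identity ${\bf dim}\,M=[H_{\alpha}:H_{N}]\,\Sigma(\alpha)$, the same real/imaginary dichotomy resting on Lemma \ref{lem3-2}, and the same computation of the summand count through $(-,-)_{Q}$ and Lemma \ref{lem3-1}. The one place you genuinely diverge is the direction of the count: the paper runs the Weyl element forward, defining $N$ as the unique indecomposable with ${\bf dim}\,N=\omega(\varepsilon_{i'})$, so that $|G_{N}|=|G_{\varepsilon_{i'}}|=d_{i}$ is immediate; you instead start from an arbitrary preimage $\alpha$ of $\beta$ and pull it back, and your step ``hence the $G$-orbit of $\omega^{-1}(\alpha)$ is exactly $\{\varepsilon_{i'}\mid i'\in\mathcal{O}_{i}\}$'' is asserted rather than proved. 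It is true, but it needs one more line: since $\Sigma(\omega^{-1}(\alpha))=\varepsilon^{i}=\sum_{i'\in\mathcal{O}_{i}}\varepsilon_{i'}>0$ and the $G$-action preserves positivity, every element of the orbit is a positive root supported inside $\mathcal{O}_{i}$; supports of roots are connected, and $\mathcal{O}_{i}$ carries no arrows by admissibility, so each orbit element is a single simple root $\varepsilon_{i'}$, and distinctness together with the value of the sum forces the orbit to be all of $\{\varepsilon_{i'}\mid i'\in\mathcal{O}_{i}\}$. With that line added your argument is complete, and it has the merit of making explicit two facts the paper leaves implicit: that $H_{N}=H_{\alpha}$ whenever $\alpha$ is real (via Kac's uniqueness theorem), and that a positive integer multiple of an imaginary root is again an imaginary root, which is what justifies the first assertion of the proposition in the imaginary case.
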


\begin{proof} Let $N$ be an indecomposable $\mathbbm{k}Q$-module and
$\alpha:={\bf dim}N$. Then $\sum(N)$ is an indecomposable
$G$-invariant $\mathbbm{k}Q$-module with dimension vector
$\sum_{g\in G_{N}}g(\alpha)$. We claim that
$$
\sum_{g\in G_{N}}g(\alpha)=m\Sigma(\alpha)
$$
for some positive integer $m$. Indeed, since $H_{N}\subseteq
H_{\alpha}$, we have $|H_{\alpha}|=m|H_{N}|$ and so that
$|G_{N}|=m|G_{\alpha}|$ for some positive integer $m$. Note that
$$
\sum_{g\in G_{\alpha}}g(\alpha)=\frac{\sum_{g\in
G}g(\alpha)}{|H_{\alpha}|} \quad\hbox{ and }\quad \sum_{g\in
G_{N}}g(\alpha)=\frac{\sum_{g\in G}g(\alpha)}{|H_{N}|},
$$
we obtain that
$$
{\bf dim}\sum(N)=\sum_{g\in
G_{N}}g(\alpha)=m\sum_{g\in G_{\alpha}}g(\alpha)=m\Sigma(\alpha).
$$
In particular, if $\alpha$ is a real root of $Q$, then
$H_{N}=H_{\alpha}$ and so that we  take $G_{N}=G_{\alpha}$ in this
case. Therefore, $f({\bf dim}\sum(N))\in \Delta_{\Gamma}$. Note that
for every indecomposable $G$-invariant $\mathbbm{k}Q$-module $M$,
there is an indecomposable $\mathbbm{k}Q$-module $N$ such that
$M\cong\sum(N)$, we get $f({\bf dim}M)\in\Delta_{\Gamma}$.

\medskip
If $\beta:=f({\bf dim}M)$ is a real root with $f({\bf
dim}M)=\omega'(\overline{\varepsilon}_{i})$ for some
$\omega'\in\mathcal{W}(\Gamma)$ and $i\in\mathcal{I}$, then ${\bf
dim}M=\omega(\Sigma(\varepsilon_{i'}))=\Sigma(\omega(\varepsilon_{i'}))$
for any $i'\in\mathcal{O}_{i}$, where $\omega\in
C_{G}(\mathcal{W}(Q))$ corresponding to $\omega'$, by the proof of
Lemma \ref{lem3-2}. Denote by $N$ the unique indecomposable
$\mathbbm{k}Q$-module with ${\bf dim}N=\omega(\varepsilon_{i'})$,
then $M=\sum(N)$ is the unique indecomposable $G$-invariant
$\mathbbm{k}Q$-module satisfying ${\bf
dim}M=\omega(\Sigma(\varepsilon_{i'}))$ and $M$ is independent on
the taking of $i'\in\mathcal{O}_{i}$. Finally, note that
$$
\frac{1}{2}({\bf dim}M, {\bf
dim}M)_{Q}=\frac{1}{2}(\Sigma(\varepsilon_{i'}),
\Sigma(\varepsilon_{i'}))_{Q}=d_{i}=|G_{\varepsilon_{i'}}|=|G_{N}|,
$$
we are done.
\end{proof}

We suppose now that $G$ is abelian and let
$$e:=\sum_{i\in\mathcal{I}}e_{i}
\in\mathbbm{k}Q\subseteq\mathbbm{k}Q\ast G,$$
where $e_{i}$ is the
idempotent element of $\mathbbm{k}Q$ corresponding to vertex $i\in
I$. By the proof of \cite[Theorem 1]{De}, we know that
$\mathbbm{k}Q\ast G$ is Morita equivalent to $e\mathbbm{k}Q\ast Ge$
and $e\mathbbm{k}Q\ast Ge\cong\mathbbm{k}\widehat{Q}$. Thus we view
the functor
$$
\begin{aligned}E:\quad\mbox{{\bf
mod}-}\mathbbm{k}Q\ast G\quad &\longrightarrow \quad\mbox{{\bf
mod}-}\mathbbm{k}\widehat{Q} \nonumber\\
M  \qquad&\mapsto  \qquad eM
\end{aligned}
$$
as the equivalence functor between ${\bf mod}$-$\mathbbm{k}Q\ast G$
and ${\bf mod}$-$\mathbbm{k}\widehat{Q}$. Denote by
$$
\begin{array}{ll} F:=\mathbbm{k}Q\ast G\otimes_{\mathbbm{k}Q}-:
\quad&\mbox{{\bf mod}-}\mathbbm{k}Q\quad \longrightarrow
\quad\mbox{{\bf mod}-}\mathbbm{k}Q\ast G\\
H:=\mbox{Res}|_{\mathbbm{k}Q}: & \mbox{{\bf mod}-}\mathbbm{k}Q\ast
G\quad \longrightarrow \quad\mbox{{\bf mod}-}\mathbbm{k}Q
\end{array}
$$
Following from \cite[Theorem 1.1]{De}, $(H, F)$ and $(F, H)$ are
adjoint pairs.

Moreover, for any $\mathbbm{k}\widehat{Q}$-module $X$, $HE^{-1}(X)$
is a $G$-invariant $\mathbbm{k}Q$-module and there is a
$\mathbbm{k}Q$-module $M$ such that $HE^{-1}(X)\cong\sum(M)$, where
$E^{-1}$ is the quasi-inverse of $E$. Identifying $X$ with a
$\widehat{Q}$-representation $(X_{i\rho}, X_{\alpha})$,  we have
$$
\sum_{\rho\in{\rm irr} G_{i}}X_{i\rho}\cong
e_{i}HE^{-1}(X)e_{i}\cong\bigoplus_{g\in G_{M}}(^{g}M)_{i}.
$$
Suppose ${\bf
dim}X:=\sum_{(i\rho)\in\widehat{I}}\alpha_{i\rho}\varepsilon_{i\rho}$,
then
$$
\sum_{\rho\in{\rm irr} G_{i}}\alpha_{i\rho}=\sum_{g\in
G_{M}}\dim(^{g}M)_{i}=f\Big({\bf dim}\sum(M)\Big)_{i}.
$$
Therefore, the Moriat equivalence and the restriction functor induce
a map
$$
h:\quad
\mathbb{Z}\widehat{I}\longrightarrow\mathbb{Z}\mathcal{I}
$$
given by $h(\alpha)_{i}=\sum_{\rho\in{\rm irr}
G_{i}}\alpha_{i\rho}\overline{\varepsilon}_{i}$ for any
$\alpha=\sum_{(i\rho)\in\widehat{I}}\alpha_{i\rho}\varepsilon_{i\rho}
\in\mathbb{Z}\widehat{I}$. The restriction of $h$ to the root system
$\Delta_{\widehat{Q}}$ is also denoted by $h$. Then $h:
\Delta_{\widehat{Q}}\rightarrow\Delta_{\Gamma}$ is well-defined
since $X$ is an indecomposable $\mathbbm{k}\widehat{Q}$-module if
and only if $M$ is an indecomposable $\mathbbm{k}Q$-module. By
Proposition \ref{prop3-3}, we have

\begin{corollary} \label{cor3-4}
For any indecomposable $\widehat{Q}$-representation $X$, $h({\bf
dim}X)$ is a positive root of $\Gamma$.
\end{corollary}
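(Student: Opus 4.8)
The plan is to deduce the statement directly from Proposition~\ref{prop3-3} by transporting $X$ along the equivalence $E$ and the restriction functor $H$. First I would observe that, since $E$ is an equivalence and $X$ is an indecomposable $\widehat{Q}$-representation, the module $E^{-1}(X)$ is an indecomposable $\mathbbm{k}Q\ast G$-module. Restricting along $H$ produces the $G$-invariant $\mathbbm{k}Q$-module $HE^{-1}(X)$, which by the discussion preceding the statement is isomorphic to $\sum(M)$ for some $\mathbbm{k}Q$-module $M$. The decisive input is the correspondence recorded there, namely that indecomposability of $X$ is equivalent to indecomposability of $M$; hence $\sum(M)$ is an indecomposable $G$-invariant $\mathbbm{k}Q$-module.

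With this in hand the argument is short. Proposition~\ref{prop3-3} applied to the indecomposable $G$-invariant module $\sum(M)$ shows that $f({\bf dim}\sum(M))$ is a root of $\Gamma$. On the other hand, the construction of $h$ was arranged precisely so that
$$
h({\bf dim}X)=f\big({\bf dim}\sum(M)\big),
$$
and therefore $h({\bf dim}X)$ is a root of $\Gamma$. Finally, every dimension vector has non-negative entries, so ${\bf dim}\sum(M)$ lies in $(\mathbbm{Z}I)^{G}$ with ${\bf dim}\sum(M)\ge 0$; since $f$ is an order-preserving bijection onto $\mathbbm{Z}\mathcal{I}$, the root $f({\bf dim}\sum(M))$ is a non-negative, non-zero element, hence a \emph{positive} root of $\Gamma$, which gives the assertion.

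From the point of view of this corollary almost all the substance has already been absorbed into the earlier preparation, so I do not expect a serious obstacle to remain. The genuine content is the passage $X\mapsto E^{-1}(X)\mapsto HE^{-1}(X)\cong\sum(M)$ preserving indecomposability: this is exactly the Clifford-theoretic behaviour of the skew group construction, resting on the Krull--Schmidt property of the category of $G$-invariant $\mathbbm{k}Q$-modules, and it was established before the statement. The only observation beyond Proposition~\ref{prop3-3} that I would still need to spell out is the upgrade from \emph{root} to \emph{positive root}, and this is immediate from the non-negativity of dimension vectors together with the order-preservation of $f$.
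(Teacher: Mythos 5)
Your proposal is correct and follows essentially the same route as the paper: the paper derives Corollary~\ref{cor3-4} directly from Proposition~\ref{prop3-3} via the chain $X\mapsto E^{-1}(X)\mapsto HE^{-1}(X)\cong\sum(M)$, the equivalence ``$X$ indecomposable iff $M$ indecomposable,'' and the identity $h({\bf dim}X)=f\big({\bf dim}\sum(M)\big)$ built into the construction of $h$. Your only addition is to spell out the (trivial but worth noting) upgrade from root to positive root using non-negativity of dimension vectors and order-preservation of $f$, which the paper leaves implicit.
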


Up to now, we have obtained the map $h:
\mathbb{Z}\widehat{I}\rightarrow\mathbb{Z}\mathcal{I}$ and have
shown the half of Theorem \ref{thm1-1}(1). Before completing the proof
of Theorem \ref{thm1-1}, we should define an action of $G$ on
$\mathbbm{k}\widehat{Q}$ and give the dual between $(Q, G)$ and
$(\widehat{Q}, G)$. In the following subsection, we first describe
the duality of $(Q, G)$.

\medskip
\noindent {\bf 3.2.} We write the abelian group $G$ as the product
of some finite cyclic group, i.e.,
$$
G=\langle g_{1}\rangle\times\langle
g_{2}\rangle\times\cdots\times\langle g_{n}\rangle,
$$
where the
order of $g_{i}$ is $m_{i}$ for $1\leq i\leq n$. Then
$|G|=m_{1}m_{2}\cdots m_{n}$.

We now define an action of $G$ on $\widehat{Q}$. Since $G$ is
abelian, all the characters $\chi$ of $G$ are linear. The set of all
the characters of $G$ is an abelian group with the multiplication
$$
\chi\chi'(g)=\chi(g)\chi'(g),
$$
for all $g\in G$. We denote this group by $\widetilde{G}$. Setting
$\varphi: G\rightarrow\widetilde{G}$ by
$$
\varphi(g)=\chi_{g}, \quad
\chi_{g}(g')=\xi_{1}^{t_{1}s_{1}}\xi_{2}^{t_{2}s_{2}}
\cdots\xi_{n}^{t_{n}s_{n}}
$$
if $g=g_{1}^{t_{1}}g_{2}^{t_{2}}\cdots
g_{n}^{t_{n}}$ and $g'=g_{1}^{s_{1}}g_{2}^{s_{2}}\cdots
g_{n}^{s_{n}}$, where $\xi_{i}$ is a primitive $m_{i}$-th root of
unity. It is easy to see that $\varphi$ is a group isomorphism. By
\cite{RR}, we define a linear action of $G$ on $\mathbbm{k}Q\ast G$
by setting
$$
g(\lambda h)=\chi_{g}(h)\lambda h,
$$
for all $g\in G, \lambda h\in\mathbbm{k}Q\ast G$. Then
$G\subseteq\Aut(\mathbbm{k}Q\ast G)$. By \cite[Proposition 5.1]{RR},
we have

\begin{proposition} \label{prop3-5}
The map $\psi : (\mathbbm{k}Q\ast G)\ast G\rightarrow{\rm
End}_{\mathbbm{k}Q}(\mathbbm{k}Q\ast G)$ defined by
$$
\psi(\lambda gh)(\mu h')=\chi_{h}(h')\lambda g\mu h'
$$
is an algebra isomorphism. It follows that $(\mathbbm{k}Q\ast G)\ast
G$ is Morita equivalent to $\mathbbm{k}Q$.
\end{proposition}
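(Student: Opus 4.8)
The plan is to verify that $\psi$ is a well-defined algebra homomorphism and then show it is bijective; the Morita equivalence claim will follow formally from the structure of the endomorphism ring.

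First I would make sense of the target. Recall that $\mathbbm{k}Q\ast G$ is a free right $\mathbbm{k}Q$-module (indeed $\mathbbm{k}Q\ast G\cong\mathbbm{k}Q\otimes_{\mathbbm{k}}\mathbbm{k}[G]$ as a right $\mathbbm{k}Q$-module via the inclusion), so $\End_{\mathbbm{k}Q}(\mathbbm{k}Q\ast G)$ is a matrix algebra of size $|G|$ over $\mathbbm{k}Q$ in a suitable sense. For a fixed pair $(g,h)$ and scalar-free generator, the formula $\psi(\lambda g h)(\mu h')=\chi_{h}(h')\lambda g\mu h'$ must first be checked to define a right $\mathbbm{k}Q$-module endomorphism of $\mathbbm{k}Q\ast G$: linearity in $\mu h'$ is clear, and right $\mathbbm{k}Q$-linearity amounts to checking that multiplying the argument $\mu h'$ on the right by $\nu\in\mathbbm{k}Q$ behaves correctly, which uses the multiplication rule $(\mu h')\nu=\mu h'(\nu)h'$ and the fact that $\chi_{h}(h')$ is unchanged. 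This is the step where the skew multiplication and the character $\chi$ interact, so I would write it out carefully.

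The heart of the argument is that $\psi$ respects multiplication. Here the action of $G$ on $\mathbbm{k}Q\ast G$ defined just above (namely $g(\lambda h)=\chi_{g}(h)\lambda h$) and the outer $\ast G$ must be reconciled. Concretely, I would compute $\psi\big((\lambda g_{1}h_{1})(\lambda' g_{2}h_{2})\big)$ using the skew-group multiplication in $(\mathbbm{k}Q\ast G)\ast G$, and separately compute the composite $\psi(\lambda g_{1}h_{1})\circ\psi(\lambda' g_{2}h_{2})$ as endomorphisms, and match them. The multiplication on $(\mathbbm{k}Q\ast G)\ast G$ sends $(\lambda g_{1}\cdot h_{1})(\lambda' g_{2}\cdot h_{2})$ to $\lambda g_{1}\,h_{1}(\lambda' g_{2})\,h_{1}h_{2}=\chi_{h_{1}}(g_{2})\,\lambda g_{1}\lambda' g_{2}\,h_{1}h_{2}$, and the corresponding character factor $\chi_{h_{1}}(g_{2})$ is exactly what the composition of the two endomorphisms produces when the character $\chi_{h_{1}}$ is evaluated on the group-element part picked up in the middle. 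I expect this bookkeeping—tracking which character is evaluated on which group element in the composite—to be the main obstacle, and the admissibility of the action plays no role here; what matters is that $\varphi:G\to\widetilde{G}$ is a group isomorphism so that $\chi_{h_{1}}(g_{2}h_{2})=\chi_{h_{1}}(g_{2})\chi_{h_{1}}(h_{2})$ splits the factors correctly.

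Finally, for bijectivity I would argue by dimension count and injectivity. Both sides have $\mathbbm{k}$-dimension $|G|^{2}\dim_{\mathbbm{k}}\mathbbm{k}Q$ (the source is $(\mathbbm{k}Q\ast G)\ast G$, and the target is $\End_{\mathbbm{k}Q}$ of a rank-$|G|$ free module, so it is $|G|^{2}$ copies of $\mathbbm{k}Q$), so it suffices to prove $\psi$ injective. Injectivity follows by evaluating $\psi(x)$ on the elements $1\cdot h'$ as $h'$ ranges over $G$ and using that the values $\chi_{h}(h')$, viewed as a matrix indexed by $(h,h')$, form an invertible (Vandermonde-type / character-table) matrix since $\varphi$ is an isomorphism; hence distinct $x$ give distinct endomorphisms. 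Once $\psi$ is an isomorphism, the last sentence is immediate: $(\mathbbm{k}Q\ast G)\ast G\cong\End_{\mathbbm{k}Q}(P)$ for the progenerator $P=\mathbbm{k}Q\ast G$ (it is projective and a generator as a right $\mathbbm{k}Q$-module, being a free module of positive rank), so by the standard Morita theorem $(\mathbbm{k}Q\ast G)\ast G$ is Morita equivalent to $\mathbbm{k}Q$.
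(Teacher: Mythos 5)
Your overall strategy is sound, and it is worth noting at the outset that the paper itself offers no argument for Proposition \ref{prop3-5} at all: it is quoted directly from Reiten--Riedtmann \cite[Proposition 5.1]{RR}. So a complete direct verification like yours is genuinely more self-contained than the text. Your steps (i) well-definedness, which is exactly the check that right multiplication by $\nu$ turns the argument into $\mu h'(\nu)h'$ while leaving the factor $\chi_h(h')$ untouched, (ii) the homomorphism property, whose content is the identity $h_1(\lambda'g_2)=\chi_{h_1}(g_2)\lambda'g_2$ together with multiplicativity of the characters (note that you actually need two separate facts here: each $\chi_{h}$ is multiplicative on $G$, and $h\mapsto\chi_h$ is itself a homomorphism, so that $\chi_{h_1h_2}=\chi_{h_1}\chi_{h_2}$; your write-up conflates them, but both hold by construction), and (iv) the Morita conclusion via the progenerator $P=\mathbbm{k}Q\ast G$, are all correct, as is your observation that admissibility is irrelevant while commutativity of $G$ is essential.

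The genuine gap is in step (iii), the dimension count. The paper excludes only loops, not oriented cycles, so $\mathbbm{k}Q$ may be infinite-dimensional (for instance two vertices joined by arrows in both directions; the quivers in Remark \ref{rem3-7} are of this kind), and for infinite-dimensional spaces ``injective between spaces of equal dimension'' does not yield surjective. Fortunately your character-matrix idea already does all the work if you run it $\mathbbm{k}Q$-linearly rather than $\mathbbm{k}$-linearly. The map $\psi$ is left $\mathbbm{k}Q$-linear (where $\nu\in\mathbbm{k}Q$ acts on $\End_{\mathbbm{k}Q}(\mathbbm{k}Q\ast G)$ by postcomposing with left multiplication by $\nu$); the source $(\mathbbm{k}Q\ast G)\ast G$ is free as a left $\mathbbm{k}Q$-module on $\{gh\mid g,h\in G\}$; and the target is free as a left $\mathbbm{k}Q$-module on the matrix units $E_{a,b}$, $a,b\in G$, i.e.\ the right $\mathbbm{k}Q$-linear maps sending the basis element $b$ to $a$ and every other element of $G$ to zero (here one uses that $\{h'\mid h'\in G\}$ is a right $\mathbbm{k}Q$-basis of $\mathbbm{k}Q\ast G$). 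Since $\psi(gh)$ and $\sum_{h'\in G}\chi_h(h')E_{gh',h'}$ are both right $\mathbbm{k}Q$-linear and agree on that basis, they are equal; hence the matrix expressing the family $\{\psi(gh)\}$ in the basis $\{E_{a,b}\}$ is, after reordering, block diagonal with every block equal to the character table $(\chi_h(h'))_{h,h'}$ of $G$. That table is invertible over $\mathbbm{k}$ by the orthogonality relations --- and note this is where ${\rm char}\,\mathbbm{k}\nmid|G|$ enters, not merely the fact that $\varphi$ is an isomorphism. Since its entries are central scalars, $\psi$ carries a left $\mathbbm{k}Q$-basis to a left $\mathbbm{k}Q$-basis and is therefore bijective. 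This subsumes your injectivity argument and removes any dependence on $\dim_{\mathbbm{k}}\mathbbm{k}Q$ being finite.
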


Since $e\mathbbm{k}Q\ast Ge\cong\mathbbm{k}\widehat{Q}$ and the
action of $G$ on $\mathbbm{k}Q\ast G$ stabilizes $e$, the action of
$G$ on $\mathbbm{k}Q\ast G$ naturally induces an action of $G$ on
$\mathbbm{k}\widehat{Q}$ such that
$G\subseteq\Aut(\mathbbm{k}\widehat{Q})$. Therefore, we get a skew
group algebra $\mathbbm{k}\widehat{Q}\ast G$ under this action. Let
$\widehat{\widehat{Q}}$ be the generalized Mckay quiver of
$(\widehat{Q}, G)$. Then, there is a Morita equivalence between
$\mathbbm{k}\widehat{Q}\ast G$ and
$\mathbbm{k}\widehat{\widehat{Q}}$ by Theorem \ref{thm2-1}.

\begin{proposition} \label{prop3-6}
Let $\widehat{Q}$ be the generalized McKay quiver of $(Q, G)$ under
the action of $G$ defined as above. Then the generalized McKay
quiver $\widehat{\widehat{Q}}$ of $(\widehat{Q}, G)$ coincides with
$Q$.
\end{proposition}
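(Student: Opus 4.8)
The plan is to identify $\mathbbm{k}\widehat{\widehat{Q}}$ up to Morita equivalence with $\mathbbm{k}Q$ by iterating the skew group algebra construction, and then to recover the quiver itself from the path algebra via the invariance of the Gabriel quiver. The backbone is Proposition~\ref{prop3-5}, which already places the double skew group algebra $(\mathbbm{k}Q\ast G)\ast G\cong{\rm End}_{\mathbbm{k}Q}(\mathbbm{k}Q\ast G)$ in the Morita class of $\mathbbm{k}Q$.

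First I would observe that the idempotent $e=\sum_{i\in\mathcal{I}}e_{i}$ is invariant under the character action of $G$ on $A:=\mathbbm{k}Q\ast G$, since $e$ lies in the subalgebra $\mathbbm{k}Q$ (group component $1$) and $g(\lambda\cdot 1)=\chi_{g}(1)\lambda=\lambda$. For such a $G$-invariant idempotent the two skew group constructions commute: one has $(1\otimes g)(e\otimes 1)=g(e)\otimes g=e\otimes g=(e\otimes 1)(1\otimes g)$, so $e$ is central with respect to the second copy of $G$, and a direct computation of $e(\mu\otimes g)e=(e\mu e)\otimes g$ shows
\[
e\big((\mathbbm{k}Q\ast G)\ast G\big)e\;\cong\;(eAe)\ast G.
\]
Combined with $eAe\cong\mathbbm{k}\widehat{Q}$, and noting that the action on $\mathbbm{k}\widehat{Q}$ used to form $\widehat{\widehat{Q}}$ is exactly the restriction of the character action to $eAe$, this gives $\mathbbm{k}\widehat{Q}\ast G\cong e\big((\mathbbm{k}Q\ast G)\ast G\big)e$.

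Next I would show that $e$ is a full idempotent in $A\ast G$, i.e. $(A\ast G)e(A\ast G)=A\ast G$. By the proof of \cite[Theorem~1]{De} the idempotent $e$ is already full in $A$, that is $AeA=A$; in particular $1\in AeA\subseteq (A\ast G)e(A\ast G)$, and since the right-hand side is a two-sided ideal of $A\ast G$ containing $1$, it is all of $A\ast G$. Hence $e(A\ast G)e$ is Morita equivalent to $A\ast G$. Applying Theorem~\ref{thm2-1} to $(\widehat{Q},G)$ shows that $\mathbbm{k}\widehat{\widehat{Q}}$ is Morita equivalent to $\mathbbm{k}\widehat{Q}\ast G\cong e(A\ast G)e$, and chaining these equivalences with Proposition~\ref{prop3-5} places $\mathbbm{k}\widehat{\widehat{Q}}$ in the Morita class of $\mathbbm{k}Q$.

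Finally, I would recover the quiver: the Gabriel quiver (the $\mathrm{Ext}^{1}$-quiver of the simple modules) is a Morita invariant, and for the path algebra of a loopless quiver it coincides with the quiver itself, so $\widehat{\widehat{Q}}\cong Q$. I expect the main obstacle to lie in the middle step rather than in the final invariance argument: one must verify that the character action restricted to $eAe$ is genuinely the action on $\mathbbm{k}\widehat{Q}$ that defines $\widehat{\widehat{Q}}$, that it satisfies Demonet's hypotheses (it permutes the primitive idempotents $e_{i\rho}$ by twisting $\rho$ with $\chi_{g}|_{G_{i}}$ and stabilizes the arrow spaces) and remains admissible so that Theorem~\ref{thm2-1} applies, and that the identification $e(A\ast G)e\cong(eAe)\ast G$ is natural with respect to these actions rather than merely an abstract isomorphism of algebras.
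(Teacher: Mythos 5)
Your proposal is correct and follows essentially the same route the paper takes: the paper likewise derives the duality from Proposition~\ref{prop3-5} combined with Theorem~\ref{thm2-1}, using that the character action fixes $e$ so that $\mathbbm{k}\widehat{Q}\ast G\cong e\big((\mathbbm{k}Q\ast G)\ast G\big)e$ lies in the Morita class of $\mathbbm{k}Q$, whence $Q$ and $\widehat{\widehat{Q}}$ coincide as (loopless) Gabriel quivers of Morita equivalent path algebras. In fact the paper states Proposition~\ref{prop3-6} without a formal proof (its subsequent computation of the idempotents $e_{(i,s_{i_{1}},\cdots,s_{i_{n}})}$, the $G$-action on them, and the arrow counts is offered as supplementary description), so your verifications that $e$ is $G$-invariant and full in $(\mathbbm{k}Q\ast G)\ast G$, and that $e\big((\mathbbm{k}Q\ast G)\ast G\big)e\cong\big(e(\mathbbm{k}Q\ast G)e\big)\ast G$ compatibly with the action defining $\widehat{\widehat{Q}}$, supply precisely the details the paper leaves implicit.
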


Thus we get the dual between $(Q, G)$  and $(\widehat{Q}, G)$. Now,
for the relationship between quivers $Q$ and $\widehat{Q}$, and the
action of $G$ on $\widehat{Q}$, we give some more description. Note
that the stabilizer $G_{i}$ of $i\in I$ has the form
$$
G_{i}=\langle
g_{1}^{d_{i_{1}}}\rangle\times\langle
g_{2}^{d_{i_{2}}}\rangle\times\cdots\times\langle
g_{n}^{d_{i_{n}}}\rangle,$$ where
$$\nu_{i_{j}}:=|\langle
g_{j}^{d_{i_{j}}}\rangle|=\frac{m_{i}}{d_{i_{j}}}, \qquad 1\leq
j\leq n,
$$
and so that
$$
d_{i}=|\mathcal{O}_{i}|=\frac{|G|}{|G_{i}|}=d_{i_{1}}\times
d_{i_{2}}\times\cdots\times d_{i_{n}}.
$$
We set
$$
\begin{aligned}&e_{(i,s_{i_{1}},s_{i_{2}},\cdots,s_{i_{n}})}\nonumber\\
&\qquad=\frac{1}{|G_{i}|}
\sum_{j_{1}=0}^{\nu_{i_{1}}-1}\sum_{j_{2}=0}^{\nu_{i_{2}}-1}\cdots
\sum_{j_{n}=0}^{\nu_{i_{n}}-1}\xi_{1}^{d_{i_{1}}j_{1}s_{i_{1}}}
\xi_{2}^{d_{i_{2}}j_{2}s_{i_{2}}}\cdots\xi_{n}^{d_{i_{n}}j_{n}s_{i_{n}}}
g_{1}^{d_{i_{1}}j_{1}}g_{2}^{d_{i_{2}}j_{2}}\cdots
g_{n}^{d_{i_{n}}j_{n}}.
\end{aligned}
$$
Then one can check that
$\big\{e_{(i,s_{i_{1}},s_{i_{2}},\cdots,s_{i_{n}})}\mid
s_{i_{j}}\in\mathbb{Z}/\nu_{i_{j}}\mathbb{Z}\mbox{ for all } 1\leq
j\leq n\big\}$ is a complete set of primitive orthogonal idempotents
of $\mathbbm{k}[G_{i}]$.

It is obvious that
$$
g_{j}(e_{(i,s_{i_{1}},s_{i_{2}},\cdots,s_{i_{n}})})
=e_{(i,s_{i_{1}},\cdots,s_{i_{j-1}},s'_{i_{j}},s_{i_{j+1}},
\cdots,s_{i_{n}})}
$$
for any $1\leq j\leq n,$ where
$s'_{i_{j}}\in\mathbb{Z}/\nu_{i_{j}}\mathbb{Z}$ and
$s'_{i_{j}}\equiv s_{i_{j}}+1 \mod{\nu_{i_{j}}}$. Since for each
idempotent $e_{(i,s_{i_{1}},s_{i_{2}},\cdots,s_{i_{n}})}$, there is
a unique corresponding one dimensional irreducible representation
$\rho$ of $G_{i}$ defined by the group homomorphism $\phi_{\rho} :
G_{i}\rightarrow\mathbbm{k}$,
$g_{j}^{d_{i_{j}}}\mapsto\xi^{d_{i_{j}}s_{i_{j}}}$, for $1\leq j\leq
n$. Thus we can index the vertices set $\widehat{I}$ by some
sequences $(i,s_{i_{1}},s_{i_{2}},\cdots,s_{i_{n}})$, i.e.,
$$
\widehat{I}=\left\{(i,s_{i_{1}},s_{i_{2}},\cdots,s_{i_{n}})\mid
i\in\mathcal{I}, s_{i_{j}}\in\mathbb{Z}/\nu_{i_{j}}\mathbb{Z} \mbox{
for all } 1\leq j\leq n\right\}.
$$
Then the action of $G$ on $\widehat{I}$ is clearly and so that the
orbit of $(i, \rho)\in\widehat{I}$ has the form
$$
\left\{(i,s_{i_{1}},s_{i_{2}},\cdots,s_{i_{n}})\mid
s_{i_{j}}\in\mathbb{Z}/\nu_{i_{j}}\mathbb{Z} \mbox{ for } 1\leq
j\leq n\}=\{(i, \rho)\mid \rho\in\irr G_{i}\right\}
$$
for some
$i\in\mathcal{I}$. Furthermore, it is easy to see that if the action
of $G$ on $\mathbbm{k}Q$ is admissible, then so is on
$\mathbbm{k}\widehat{Q}$.

For any $i, j\in I$, we consider the group $G_{ij}:=G_{i}\cap
G_{j}=\langle g_{1}^{t_{1}}\rangle\times\langle
g_{2}^{t_{2}}\rangle\times\cdots\times\langle g_{n}^{t_{n}}\rangle$,
where $t_{l}$ is the least common multiple of $d_{i_{l}}$ and
$d_{j_{l}}$ for $1\leq l\leq n$. Note that the vector space $E_{ij}$
spanned by arrows $\alpha: i\rightarrow j$ in $Q$ is a
$\mathbbm{k}[G_{ij}]$-bimodule, we can find a basis of $E_{ij}$ such
that the action of $G_{ij}$ is diagonal. That is, if
$g=g_{1}^{t_{1}}g_{2}^{t_{2}}\cdots
g_{n}^{t_{n}}\in\mathbbm{k}[G_{ij}]$, then for any basis element
$\alpha'\in E_{ij}$,
$$g(\alpha')=\xi_{1}^{t_{1}r_{1}}
\xi_{2}^{t_{2}r_{2}}\cdots\xi_{n}^{t_{n}r_{n}}\alpha'$$ for some
$r_{1},r_{2},\cdots, r_{n}\in\mathbb{Z}$. Since $G_{ij}$ is abelian,
the number of the basis elements of $E_{ij}$ is just the number of
arrows from $i$ to $j$ in $Q$. Moreover, it is easy to see that the
$t_{1}t_{2}\cdots t_{n}$ elements
$$\alpha', ~g_{1}(\alpha'),\cdots, ~g_{n}(\alpha'),
~g^{2}_{1}(\alpha'), ~g_{1}g_{2}(\alpha'),\cdots,
~g^{2}_{n}(\alpha'),\cdots\cdots,
~g^{t_{1}-1}_{1}g^{t_{2}-1}_{2}\cdots g^{t_{n}-1}_{n}(\alpha')$$ are
linearly independent. That is,  for any arrow $\alpha:i\rightarrow
j$ in $Q$, there are $t_{1} t_{2}\cdots t_{n}$ arrows in its orbit.

On the other hand, we can calculate that
$$
\begin{aligned}&e_{(j,s_{j_{1}},s_{j_{2}},\cdots,s_{j_{n}})}\alpha'
e_{(i,s_{i_{1}},s_{i_{2}},\cdots,s_{i_{n}})}\nonumber\\
&\qquad=\frac{d_{i}d_{j}}{|G|^{2}}\sum_{p_{1}=0}^{\nu_{i_{1}}-1}\cdots
\sum_{p_{n}=0}^{\nu_{i_{n}}-1}
\sum_{q_{1}=0}^{\nu_{j_{1}}-1}\cdots\sum_{q_{n}=0}^{\nu_{j_{n}}-1}
\xi_{1}^{d_{i_{1}}p_{1}s_{i_{1}}+d_{j_{1}}q_{1}s_{j_{1}}}\cdots
\xi_{n}^{d_{i_{n}}p_{n}s_{i_{n}}+d_{j_{n}}q_{n}s_{j_{n}}}\nonumber\\
&\qquad\qquad\qquad\qquad\qquad\qquad\qquad\quad
g_{1}^{d_{j_{1}}q_{1}}\cdots g_{n}^{d_{j_{n}}q_{n}}(\alpha')
g_{1}^{d_{i_{1}}p_{1}+d_{j_{1}}q_{1}}\cdots
g_{n}^{d_{i_{n}}p_{n}+d_{j_{n}}q_{n}}.
\end{aligned}
$$
We write
\begin{eqnarray*}
d_{i_{l}}p_{l}&=&P_{l}t_{l}+d_{i_{l}}p'_{l}, \qquad\hbox{ where }
0\leq
P_{l}<\frac{m_{l}}{t_{l}}, \quad 0\leq p'_{l}<\frac{t_{l}}{d_{i_{l}}},\\
d_{j_{l}}q_{l}&=&P'_{l}t_{l}+d_{j_{l}}q'_{l}, \qquad\hbox{ where }
0\leq
P'_{l}<\frac{m_{l}}{t_{l}}, \quad 0\leq q'_{l}<\frac{t_{l}}{d_{j_{l}}},\\
d_{i_{l}}k_{l}&\equiv& (P_{l}+P'_{l})t_{l}+d_{i_{l}}p'_{l}
\mod{m_{i}}, \qquad\hbox{ where } 0\leq k_{l}<\nu_{i_{l}},
\end{eqnarray*}
for all $0\leq l\leq n$. Then the right side of the equation becomes
$$
\begin{aligned}\frac{d_{i}d_{j}}{|G|^{2}}
&\Bigg(\sum_{P'_{1}=0}^{\frac{m_{1}}{t_{1}}-1}
\xi_{1}^{P'_{1}t_{1}(r_{1}+s_{j_{1}}-s_{i_{1}})}\Bigg)\cdots
\Bigg(\sum_{P'_{n}=0}^{\frac{m_{n}}{t_{n}}-1}
\xi_{n}^{P'_{n}t_{n}(r_{n}+s_{j_{n}}-s_{i_{n}})}\Bigg)\nonumber\\
&\quad
\Bigg(\sum_{k_{1}=0}^{\nu_{i_{1}}-1}\cdots\sum_{k_{n}=0}^{\nu_{i_{n}}-1}
\sum_{q'_{1}=0}^{\frac{t_{1}}{d_{j_{1}}}-1}\cdots
\sum_{q'_{n}=0}^{\frac{t_{n}}{d_{j_{n}}}-1}
\xi_{1}^{d_{i_{1}}k_{1}s_{i_{1}}+d_{j_{1}}q'_{1}s_{j_{1}}}\cdots
\xi_{n}^{d_{i_{n}}k_{n}s_{i_{n}}+d_{j_{n}}q'_{n}s_{j_{n}}}\nonumber\\
&\qquad\qquad\qquad\qquad\qquad\qquad g_{1}^{d_{j_{1}}q'_{1}}\cdots
g_{n}^{d_{j_{n}}q'_{n}}(\alpha')
g_{1}^{d_{i_{1}}k_{1}+d_{j_{1}}q'_{1}}\cdots
g_{n}^{d_{i_{n}}k_{n}+d_{j_{n}}q'_{n}}\Bigg).
\end{aligned}
$$
Note that
$$
\begin{aligned}\Big\{g_{1}^{d_{j_{1}}q'_{1}}\cdots
g_{n}^{d_{j_{n}}q'_{n}}(\alpha')
g_{1}^{d_{i_{1}}k_{1}+d_{j_{1}}q'_{1}}&\cdots
g_{n}^{d_{i_{n}}k_{n}+d_{j_{n}}q'_{n}}\nonumber\\
& \mid 0\leq k_{l}<\nu_{i_{l}}, ~0\leq
q'_{l}<\frac{t_{l}}{d_{j_{l}}}\mbox{ for } 1\leq l\leq n\Big\}
\end{aligned}
$$
is a linearly independent set. We obtain that
$e_{(j,s_{j_{1}},s_{j_{2}},\cdots,s_{j_{n}})}\alpha'
e_{(i,s_{i_{1}},s_{i_{2}},\cdots,s_{i_{n}})}\neq0$ if and only if
$s_{i_{l}}\equiv s_{j_{l}}+r_{l} \mod{\frac{m_{l}}{t_{l}}}$ for all
$0\leq l\leq n$. It follows that there are $\frac{t_{1}\cdots
t_{n}|G|}{d_{i}d_{j}}$ arrows in $\widehat{Q}$ for each arrow
$\alpha:i\rightarrow j$ in $Q$.

\medskip
Denote by
$\widehat{A}=(a_{(i\rho)(j\sigma)})_{\widehat{I}\times\widehat{I}}$
the Cartan matrix of $\widehat{Q}$, by $\widehat{\Gamma}$ the valued
quiver corresponding to $(\widehat{Q}, G)$ and by
$\widehat{C}=(\widehat{c}_{ij})_{\mathcal{I}\times\mathcal{I}}
=\widehat{D}^{-1}\widehat{B}$ the generalized Cartan matrix of
$\widehat{\Gamma}$, where
$\widehat{B}=(\widehat{b}_{ij})_{\mathcal{I}\times\mathcal{I}}$ is
symmetric, $\widehat{D}=\mbox{diag}(\widehat{d}_{i})$ is diagonal.
Then
$$
\frac{1}{t_{1}\cdots t_{n}}\sum_{i'\in\mathcal{O}_{i}\atop
j'\in\mathcal{O}_{j}}a_{i'j'}=\frac{d_{i}d_{j}}{t_{1}\cdots
t_{n}|G|}\sum_{\rho\in{\rm irr} G_{i}\atop \sigma\in{\rm irr}
G_{j}}a_{(i\rho)(j\sigma)}.
$$
It follows that $\widehat{b}_{ij}=\frac{|G|}{d_{i}d_{j}}b_{ij}$,
$\widehat{D}=|G|D^{-1}$, $\widehat{B}=|G|D^{-1}BD^{-1}$ and
$\widehat{C}=(\widehat{D})^{-1}\widehat{B}=BD^{-1}=C^{T}$, the
transpose of $C$. Therefore $\Gamma$ and $\widehat{\Gamma}$ are dual
valued graph in the sense of \cite{Ka}.

\begin{remark} \label{rem3-7}
If $G\subseteq\Aut(\mathbbm{k}Q)$ is a finite abelian group, we have
given the dual of $(Q, G)$ and $(\widehat{Q}, G)$ (see Proposition
\ref{prop3-6}). However, for a non-abelian group
$G\subseteq\Aut(\mathbbm{k}Q)$, the conclusion does not hold in
general. For example, Let $Q$ be the quiver
$$~~~~~~~~~~~~~~~~~~~~~~~\setlength{\unitlength}{1mm}
\begin {picture}(45,20)
\put(22,8){$\bullet$}          \put(25,10){\vector (3,2){9}}
\put(35,16){$\bullet$}         \put(25,8){\vector (3,-2){9}}
\put(35,0){$\bullet$}           \put(21,9){\vector (-1,0){10}}
\put(8,8){$\bullet$}            \put(5,8){\small$1$}
\put(38,17){\small$1'$}          \put(38,0){\small$1''$}
\put(22,5){\small$2$}
\end {picture}
$$
It is well-known that the quiver automorphism group of $Q$ is
the group $S_{3}$. Accordingly, we obtain
the generalized McKay quiver $\widehat{Q}$ of $(Q, S_{3})$ as
follows
$$~~~~~~~~~~~~~~~~~~~~~~~\setlength{\unitlength}{1mm}
\begin {picture}(45,17)
\put(13,14){$\bullet$}              \put(13,7){$\bullet$}
\put(13,0){$\bullet$}               \put(16,15){\vector (1,0){17}}
\put(16,1){\vector (1,0){17}}        \put(34,14){$\bullet$}
\put(34,0){$\bullet$}              \put(16,8){\vector (3,-1){16}}
\put(16,8.5){\vector (3,1){16}}
\end{picture}
$$
One can check that there does not exist
a subgroup $G'$ of $\Aut(\mathbbm{k}\widehat{Q})$ such that the
generalized McKay quiver of $(\widehat{Q}, G')$ is $Q$.

\medskip
But if the action of $G$ is ``good", there exists the duality still.
For example, we consider the finite non-abelian group
$$
G=\left\langle a, b\mid a^{3}=b^{2}, b^{4}=1, aba=b\right\rangle
$$
and the quiver $Q$:
$$~~~~~~~~~~~~~~~~~~~~~~~\setlength{\unitlength}{1mm}
\begin{picture}(50,32)
\put(24,1){$\bullet$}                \put(24,10){$\bullet$}
\put(37,24){$\bullet$}               \put(10, 24){$\bullet$}
\put(44,30){$\bullet$}               \put(2, 30){$\bullet$}
\put(25,9){\vector (0,-1){6}}        \put(26,13){\vector (1,1){10}}
\put(22.5,13){\vector (-1,1){10}}    \put(13,24){\vector (1,-1){10}}
\put(37,22.5){\vector (-1,-1){10}}  \put(14.5,26){\vector (1,0){20}}
\put(34.5,25){\vector (-1,0){20}}   \put(9.5,26){\vector (-4,3){5}}
\put(39,26){\vector (4,3){5}}       \put(22,26){$\alpha^{\ast}$}
\put(26,23){$\alpha$}                   \put(18,20){$\beta$}
\put(16,14){$\beta^{\ast}$}              \put(28,17){$\gamma$}
\put(34,17){$\gamma^{\ast}$}             \put(39,22){\small $1$}
\put(46,28){\small $1'$}                 \put(8,22){\small $2$}
\put(0,28){\small $2'$}                 \put(27,9){\small $3$}
\put(27,0){\small $3'$}                  \put(21,6){$\sigma_{3}$}
\put(8,28){$\sigma_{2}$}                \put(38,28){$\sigma_{1}$}
\end{picture}.$$
The action of $G$ is given by
\begin{center}
\begin{tabular*}{8cm}{@{\extracolsep{\fill}}l|llllllllllllllr}
& $e_{1}$ & $e_{2}$ & $e_{3}$ & $e_{1'}$ & $e_{2'}$ & $e_{3'}$ &
$\alpha$ & $\alpha^{\ast}$  \\
\hline $a$ & $e_{2}$ & $e_{3}$ & $e_{1}$ & $e_{2'}$ & $e_{3'}$ &
$e_{1'}$ & $\beta$ & $\beta^{\ast}$ \\
$b$ & $e_{1}$ & $e_{3}$ & $e_{2}$ & $e_{1'}$ & $e_{2'}$ & $e_{2'}$ &
$-\gamma^{\ast}$ & $\gamma$ \\
\end{tabular*}

\medskip
\begin{tabular*}{6cm}{@{\extracolsep{\fill}}l|lllllllllllr}
& $\beta$ & $\beta^{\ast}$ & $\gamma$ & $\gamma^{\ast}$ &
$\sigma_{1}$ & $\sigma_{2}$ & $\sigma_{3}$ \\
\hline $a$ & $\gamma$ & $\gamma^{\ast}$ & $\alpha$ & $\alpha^{\ast}$
& $\sigma_{2}$ & $\sigma_{3}$
& $\sigma_{1}$ \\
$b$ & $-\beta^{\ast}$ & $\beta$ & $-\alpha^{\ast}$ & $\alpha$ &
$\sigma_{1}$ & $\sigma_{3}$
& $\sigma_{2}$ \\
\end{tabular*}
\end{center}
where $e_{i}$ is the idempotent element of $\mathbbm{k}Q$
corresponding to vertex $i$, $i\in\{1, 2, 3, 1', 2',$ $3'\}$. By
direct calculation, one see that the generalized McKay quiver of
$(Q, G)$ is as follows.
$$~~~~~~~~~~~~~~~~~~~~~~~\setlength{\unitlength}{1mm}
\begin{picture}(50,34)
\put(17,8){$\bullet$}                     \put(41,8){$\bullet$}
\put(17,20){$\bullet$}                    \put(41,20){$\bullet$}
\put(49,28){$\bullet$}                   \put(9,28){$\bullet$}
\put(49,0){$\bullet$}                    \put(9,0){$\bullet$}
\put(20,20.5){\vector (1,0){20}} \put(39.5,21.5){\vector (-1,0){20}}
\put(20,8.5){\vector (1,0){20}}  \put(39.5,9.5){\vector (-1,0){20}}
\put(18,10.5){\vector (0,1){9}}    \put(17,19.5){\vector (0,-1){9}}
\put(41,10.5){\vector (0,1){9}}   \put(42,19.5){\vector (0,-1){9}}
\put(17,22){\vector (-1,1){6}}     \put(17,8){\vector (-1,-1){6}}
\put(43,22){\vector (1,1){6}}     \put(43,8){\vector (1,-1){6}}
\put(-5,30){$\widehat{Q} :$}     \put(31,22){$\alpha_{1}^{\ast}$}
\put(27,18){$\alpha_{1}$}         \put(31,10){$\alpha_{3}$}
\put(27,6){$\alpha_{3}^{\ast}$}   \put(13,16){$\alpha_{2}^{\ast}$}
\put(19,12){$\alpha_{2}$}             \put(43,16){$\alpha_{4}$}
\put(37,12){$\alpha_{4}^{\ast}$}      \put(17,22){\small $1$}
\put(42,22){\small $4$}               \put(17,6){\small $2$}
\put(42,6){\small $3$}                \put(11,28){\small $1'$}
\put(47,28){\small $4'$}              \put(11,0){\small $2'$}
\put(47,0){\small $3'$}               \put(12,23){$\sigma_{1}$}
\put(11,5){$\sigma_{2}$}              \put(46,5){$\sigma_{3}$}
\put(46,23){$\sigma_{4}$}
\end{picture}$$
Now, we define an action of $G$ on $\mathbbm{k}\widehat{Q}$ by
setting
\begin{center}
\begin{tabular*}{11cm}{@{\extracolsep{\fill}}l|llllllllllllllr}
& $e_{1}$ & $e_{2}$ & $e_{3}$ & $e_{4}$ & $e_{1'}$ & $e_{2'}$ &
$e_{3'}$ & $e_{4'}$ & $\alpha_{1}$ & $\alpha_{2}$ & $\alpha_{3}$ &
$\alpha_{4}$ \\
\hline $a$ & $e_{3}$ & $e_{4}$ & $e_{1}$ & $e_{2}$ & $e_{3'}$ &
$e_{4'}$ & $e_{1'}$ & $e_{4'}$ & $\xi^{2}\alpha_{3}$ &
$\xi^{4}\alpha_{4}$ &
$\xi^{2}\alpha_{1}$ & $\xi^{4}\alpha_{2}$  \\
$b$ & $e_{2}$ & $e_{3}$ & $e_{4}$ & $e_{1}$ & $e_{2'}$ & $e_{3'}$ &
$e_{4'}$ & $e_{1'}$ & $\alpha_{2}$ & $\alpha_{3}$ & $\alpha_{4}$ &
$\alpha_{1}$ \\
\end{tabular*}

\medskip
\begin{tabular*}{9cm}{@{\extracolsep{\fill}}l|llllllllllllllr}
& $\alpha^{\ast}_{1}$ & $\alpha^{\ast}_{2}$ & $\alpha^{\ast}_{3}$ &
$\alpha^{\ast}_{4}$ & $\sigma_{1}$ & $\sigma_{2}$
& $\sigma_{3}$ & $\sigma_{4}$ \\
\hline $a$ & $\xi^{2}\alpha^{\ast}_{3}$ & $\xi^{4}\alpha^{\ast}_{4}$
& $\xi^{2}\alpha^{\ast}_{1}$ & $\xi^{4}\alpha^{\ast}_{2}$ &
$\sigma_{3}$ & $\sigma_{4}$ & $\sigma_{1}$ & $\sigma_{2}$ \\
$b$ & $\alpha^{\ast}_{2}$ & $\alpha^{\ast}_{3}$ &
$\alpha^{\ast}_{4}$ & $\alpha^{\ast}_{1}$ & $\sigma_{2}$ &
$\sigma_{3}$ & $\sigma_{4}$ & $\sigma_{1}$  \\
\end{tabular*}
\end{center}
where $\xi$ is a primitive $6$-th root of unity. Then, one can check
that $\widehat{\widehat{Q}}=Q$.
\end{remark}

\noindent {\bf 3.3.} Consider the admissible action of finite
abelian group $G$ on $\mathbbm{k}\widehat{Q}$ induced from the
action of $G$ on $\mathbbm{k}Q$ as the discussion above, we set
$$
\begin{array}{ll} F':=(\mathbbm{k}Q\ast G)\ast G
\otimes_{\mathbbm{k}Q\ast G}-: \quad&\mbox{{\bf
mod}-}\mathbbm{k}Q\ast G \longrightarrow
\mbox{{\bf mod}-}(\mathbbm{k}Q\ast G)\ast G\\
H':=\mbox{Res}|_{\mathbbm{k}Q\ast G}:& \mbox{{\bf
mod}-}(\mathbbm{k}Q\ast G)\ast G\longrightarrow \mbox{{\bf
mod}-}\mathbbm{k}Q\ast G
\end{array}
$$
Similar to the functors $F$ and $H$, one can check that $(H', F')$ and
$(F', H')$ are adjoint pairs. Note that the Morita equivalence
$\mbox{{\bf mod}-}\mathbbm{k}Q\rightarrow\mbox{{\bf
mod}-}(\mathbbm{k}Q\ast G)\ast G$ is given by
$\mathcal{M}:={_{(\mathbbm{k}Q\ast G)\ast G}\mathbbm{k}Q\ast
G}\otimes_{\mathbbm{k}Q}-$, we have

\begin{lemma} \label{lem3-8}
There are natural isomorphisms
$$
F\cong H'\mathcal{M}\qquad\mbox{
and }\qquad F'\cong\mathcal{M}H.
$$
\end{lemma}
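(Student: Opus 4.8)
The plan is to realize all five functors as ``tensoring with a bimodule'' and to reduce each natural isomorphism to an isomorphism of the underlying bimodules, which by associativity of the tensor product suffices. Write $A=\mathbbm{k}Q$, $B=\mathbbm{k}Q\ast G$ and $C=(\mathbbm{k}Q\ast G)\ast G$, and recall from Proposition \ref{prop3-5} the algebra isomorphism $\psi:C\to\End_{A}(B)$, under which the Morita bimodule defining $\mathcal{M}$ is $B$ regarded as a $(C,A)$-bimodule, with $C$ acting on the left through $\psi$ and $A$ acting on the right by multiplication. In this language $F=B\otimes_{A}-$ uses the regular $(B,A)$-bimodule $B$; the functors $H$ and $H'$ are restriction along the inclusions $A\hookrightarrow B\hookrightarrow C$; and $F'=C\otimes_{B}-$ uses the regular $(C,B)$-bimodule $C$. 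Each composite is then again tensoring with a bimodule.

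First I would prove $F\cong H'\mathcal{M}$. Since $H'$ is restriction along the inclusion $\iota:B\hookrightarrow C$, $b\mapsto b\otimes e$ (with $e$ the identity of the second copy of $G$), the composite $H'\mathcal{M}$ is tensoring with $C\otimes_{C}B\cong B$, that is, with the Morita bimodule $B$ whose left $C$-action has been restricted to $B$ through $\iota$. The crux is to compute this restricted action. For $\lambda g\in B$ and $\mu h'\in B$ one has, by the formula in Proposition \ref{prop3-5},
$$\psi\big(\iota(\lambda g)\big)(\mu h')=\psi(\lambda g e)(\mu h')=\chi_{e}(h')\,(\lambda g)(\mu h')=(\lambda g)(\mu h'),$$
because $\chi_{e}$ is the trivial character. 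Hence the restricted left $C$-action on $B$ is exactly left multiplication by $B$, so the $(C,A)$-bimodule $B$ restricts along $\iota$ to the regular $(B,A)$-bimodule $B$. This is precisely the bimodule defining $F$, and therefore $F\cong H'\mathcal{M}$.

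For the second isomorphism I would argue by adjunction rather than repeat the computation. By hypothesis $(F,H)$ and $(H,F)$ are both adjoint pairs, so $H$ is simultaneously a left and a right adjoint of $F$; likewise $H'$ is a two-sided adjoint of $F'$; and $\mathcal{M}$ is an equivalence whose quasi-inverse $\mathcal{M}^{-1}$ serves as both of its adjoints. Taking left adjoints in $F\cong H'\mathcal{M}$ gives $H\cong\mathcal{M}^{-1}F'$, that is $\mathcal{M}H\cong F'$, as desired. Alternatively, a direct proof identifies $\mathcal{M}H$ with tensoring by $B\otimes_{A}B$ and uses $\psi$ together with the freeness of $B$ over $A$ to produce a $(C,B)$-bimodule isomorphism $B\otimes_{A}B\cong C$.

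The main obstacle is the explicit identification in the second paragraph: one must unwind the isomorphism $\psi$ of Proposition \ref{prop3-5} and verify that restricting the endomorphism-algebra action of $C=\End_{A}(B)$ along $B\hookrightarrow C$ returns the regular action of $B$ on itself. Once this single bimodule identity is established, both natural isomorphisms follow formally from associativity of the tensor product and the adjunction bookkeeping above.
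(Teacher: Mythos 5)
Your proof is correct and takes essentially the same route as the paper: the paper also identifies $H'\mathcal{M}$ with $F$ directly (asserting as ``clear'' the bimodule identification that you verify explicitly via the triviality of $\chi_{e}$), and then derives $F'\cong\mathcal{M}H$ by the same adjunction bookkeeping, exhibiting $(F,\mathcal{M}^{-1}F')$ as an adjoint pair and invoking uniqueness of adjoints against the pair $(F,H)$.
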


\begin{proof}
First, $H'\mathcal{M}={_{\mathbbm{k}Q\ast G}\mathbbm{k}Q\ast
G}\otimes_{\mathbbm{k}Q}-=F$ is clear. Next, since $(H', F')$ is
an adjoint pair, for any $\mathbbm{k}Q$-module $X$ and
$\mathbbm{k}Q\ast G$-module $Y$, we have
$$
\begin{aligned}\Hom_{\mathbbm{k}Q}(X,
\mathcal{M}^{-1}F'(Y))&\cong\Hom_{(\mathbbm{k}Q\ast G)\ast
G}(\mathcal{M}(X), F'(Y))\nonumber\\
&\cong\Hom_{\mathbbm{k}Q\ast G}(H' \mathcal{M}(X),
Y)\cong\Hom_{\mathbbm{k}Q\ast G}(F(X), Y).
\end{aligned}
$$
This implies that $(F, \mathcal{M}^{-1}F')$ is an adjoint pair and
so that $H\cong\mathcal{M}^{-1}F'$, $F'\cong\mathcal{M}H$.
\end{proof}

By Lemma \ref{lem3-8} and \cite[Proposition 1.8]{RR}, we have the following
proposition immediately.

\begin{proposition} \label{prop3-9}
Let $X$ and $Y$ be indecomposable $\mathbbm{k}Q\ast G$-modules. Then

(1) $FH(X)\cong H'F'(X)\cong\bigoplus_{g\in G}{^{g}X}$;

(2) $H(X)\cong H(Y)$ if and only if $F'(X)\cong F'(Y)$, if and only
if $Y\cong{^{g}X}$ for some $g\in G$;

(3) $H(X)$ (or $F'(X)$) has exactly $|H_{X}|$ indecomposable
summands.
\end{proposition}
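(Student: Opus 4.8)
The plan is to reduce all three parts to the classical Reiten--Riedtmann statements for a single skew group construction, applied to the pair $(\mathbbm{k}Q\ast G, G)$, and then to transport the resulting information through the Morita equivalence $\mathcal{M}$ by means of Lemma~\ref{lem3-8}. The key observation is that $F'$ and $H'$ are precisely the induction and restriction functors attached to $(\mathbbm{k}Q\ast G)\ast G$ over $\mathbbm{k}Q\ast G$, so \cite[Proposition 1.8]{RR} applies to them word for word, with the twist ${}^{g}X$ understood with respect to the action of $G$ on $\mathbbm{k}Q\ast G$ fixed in Subsection~3.2. Since $\mathcal{M}$ is a genuine equivalence (Proposition~\ref{prop3-5}), it preserves and reflects both isomorphisms and decompositions into indecomposables, and this is exactly what allows the statements about $F'$ to be carried over to $H$.

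First I would settle (1). Using $F\cong H'\mathcal{M}$ and $\mathcal{M}H\cong F'$ from Lemma~\ref{lem3-8}, for any $\mathbbm{k}Q\ast G$-module $X$ one has
\[
FH(X)\cong H'\mathcal{M}H(X)\cong H'F'(X),
\]
which is the first isomorphism in (1). The second, $H'F'(X)\cong\bigoplus_{g\in G}{}^{g}X$, is then nothing but the standard identity for ``restrict-after-induce'' \cite[Proposition 1.8]{RR} applied to the pair $(\mathbbm{k}Q\ast G, G)$.

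Next I would treat (2) and (3) on the $F'$ side and cross over to $H$ at the end. By \cite[Proposition 1.8]{RR}, for an indecomposable $X$ the induced module $F'(X)$ has exactly $|H_{X}|$ indecomposable summands, where $H_{X}=\{g\in G\mid {}^{g}X\cong X\}$, and two indecomposables satisfy $F'(X)\cong F'(Y)$ if and only if $Y\cong{}^{g}X$ for some $g\in G$. Since $F'\cong\mathcal{M}H$ and $\mathcal{M}$ is an equivalence, $H(X)$ and $F'(X)$ have the same number of indecomposable summands, which gives (3); and $H(X)\cong H(Y)$ if and only if $\mathcal{M}H(X)\cong\mathcal{M}H(Y)$ if and only if $F'(X)\cong F'(Y)$, which supplies the missing equivalence in (2).

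The computation is short, so the hard part will not be any single estimate but rather the bookkeeping of the two group actions. The one thing that genuinely needs checking is that the twist ${}^{g}X$ occurring in the Reiten--Riedtmann identities for $(\mathbbm{k}Q\ast G, G)$ is the very same twist used to define the inertia group $H_{X}$ in the statement; that is, that the action of $G$ on $\mathbbm{k}Q\ast G$ coming from Subsection~3.2 (via the characters $\chi_{g}$ and the isomorphism $G\cong\widetilde{G}$) is exactly the action under which \cite[Proposition 1.8]{RR} is invoked. Once this identification and the Morita property of $\mathcal{M}$ are in place, all three assertions follow at once.
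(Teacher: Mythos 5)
Your proposal is correct and coincides with the paper's own proof, which is a one-liner---``By Lemma \ref{lem3-8} and \cite[Proposition 1.8]{RR}, we have the following proposition immediately''---i.e.\ precisely your combination of the natural isomorphisms $F\cong H'\mathcal{M}$, $F'\cong\mathcal{M}H$ with the Reiten--Riedtmann identities for the pair $(\mathbbm{k}Q\ast G, G)$, transported through the Morita equivalence $\mathcal{M}$. Your concern about matching the twist ${^{g}X}$ with the character action of Subsection 3.2 is exactly the identification the paper relies on; note also Remark \ref{rem3-10}, where the count $|H_{X}|$ in part (3) is tied to the abelian hypothesis via \cite[Theorem 1.2]{HY}.
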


\begin{remark} \label{rem3-10}
Consider the action of $G$ on $\mathbbm{k}Q\ast G$, we denote by
$$H_{X}:=\{g\in G\mid F_{g}(X)\cong X\}$$
 and by $G_{X}$ a complete
set of left coset representatives of $H_{X}$ in $G$, for any
$X\in\mbox{{\bf mod}-}\mathbbm{k}Q\ast G$. In \cite{HY}, we have
shown that the number of indecomposable summands of $F'(X)$ is just
$|H_{X}|$ whenever $G$ is abelian (see \cite[Theorem 1.2]{HY}). This
means that $H(X)$ has $|H_{X}|$ indecomposable summands. Note that
$H(X)$ is an indecomposable $G$-invariant $\mathbbm{k}Q$-module,
there exists a unique indecomposable $\mathbbm{k}Q$-module $M$ such
that $H(X)\cong\sum(M)$. Therefore, we have $|H_{X}|=|G_{M}|$ and
$|G_{X}|=|H_{M}|$. Following from Proposition \ref{prop3-9}(2), for
an indecomposable $\mathbbm{k}Q$-module $M$, there are
$|G_{X}|=|H_{M}|$ non-isomorphic indecomposable $\mathbbm{k}Q\ast
G$-module structures on $\sum(M)$. This coincides with the result in
\cite{HY}. \end{remark}

For the generalized McKay quiver $\widehat{Q}$, we denote by $(-,
-)_{\widehat{Q}}$ the bilinear form on $\mathbb{Z}\widehat{I}$
determined by $\widehat{A}$, by $\Delta_{\widehat{Q}}$ the root
system of $\widehat{Q}$ with simple roots $\varepsilon_{i\rho}$,
$(i, \rho)\in\widehat{I}$, and by $\mathcal{W}(\widehat{Q})$ the
Weyl group of $\widehat{Q}$ with simple reflections $r_{i\rho}$,
$(i, \rho)\in\widehat{I}$. Consider the map $h :
\mathbb{Z}\widehat{I}\rightarrow\mathbb{Z}\mathcal{I}$ defined
above, we have

\begin{lemma} \label{lem3-11}
Let $\widehat{S}_{i}:=\prod_{\rho\in {\rm irr} G_{i}}r_{i\rho}$ for
$i\in\mathcal{I}$. Then, for each $i\in\mathcal{I}$ and
$\beta=\sum_{(i,
\rho)\in\widehat{I}}\beta_{i\rho}\varepsilon_{i\rho}
\in\mathbb{Z}\widehat{I}$, we have

(1) $(h(\beta),
\overline{\varepsilon}_{i})_{\Gamma}=d_{i}\sum_{\rho\in {\rm irr}
G_{i}}(\beta, \varepsilon_{i\rho})_{\widehat{Q}}$;

(2) $h(\widehat{S}_{i}(\beta))=\gamma_{i}(h(\beta))$;

(3) the map $\gamma_{i}\mapsto\widehat{S}_{i}$ induces an
isomorphism
$\mathcal{W}(\Gamma)\stackrel{\simeq}{\longrightarrow}C_{G}(\mathcal
{W}(\widehat{Q}))$, the set of elements in
$\mathcal{W}(\widehat{Q})$ commuting with the action of $G$.
\end{lemma}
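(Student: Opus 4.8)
The plan is to establish the three parts in order and to mirror the proof of Lemma~\ref{lem3-1}, now for the pair $(\widehat{Q},G)$, drawing on the duality between $\Gamma$ and $\widehat{\Gamma}$ worked out in Subsection~3.2. Throughout I abbreviate $c_{j}:=\sum_{\sigma\in\irr G_{j}}\beta_{j\sigma}$, so that $h(\beta)=\sum_{j\in\mathcal{I}}c_{j}\overline{\varepsilon}_{j}$ and therefore $(h(\beta),\overline{\varepsilon}_{i})_{\Gamma}=\sum_{j}c_{j}b_{ij}$, using $(\overline{\varepsilon}_{j},\overline{\varepsilon}_{i})_{\Gamma}=b_{ij}$.

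For (1), I would expand
$$
d_{i}\sum_{\rho\in\irr G_{i}}(\beta,\varepsilon_{i\rho})_{\widehat{Q}}
=d_{i}\sum_{j\in\mathcal{I}}\sum_{\sigma\in\irr G_{j}}\beta_{j\sigma}
\Big(\sum_{\rho\in\irr G_{i}}a_{(j\sigma)(i\rho)}\Big),
$$
and the step that really carries the argument is to show the inner sum $\sum_{\rho}a_{(j\sigma)(i\rho)}$ is \emph{independent of $\sigma$}. Since the $G$-action on $\widehat{Q}$ preserves the form $(-,-)_{\widehat{Q}}$ and acts transitively on the orbit $\{(i,\rho)\mid\rho\in\irr G_{i}\}$, any $g\in G$ taking $(j,\sigma)$ to $(j,\sigma')$ induces a bijection $\rho\mapsto g(\rho)$ of $\irr G_{i}$ with $a_{(j\sigma)(i\rho)}=a_{(j\sigma')(i\,g(\rho))}$, so the two sums coincide. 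Writing the common value as $s_{ji}$ and summing over $\sigma$ gives $\widehat{b}_{ji}=|\irr G_{j}|\,s_{ji}=\frac{|G|}{d_{j}}s_{ji}$; combined with the dual relation $\widehat{b}_{ji}=\frac{|G|}{d_{i}d_{j}}b_{ji}$ from Subsection~3.2, this produces $d_{i}s_{ji}=b_{ji}=b_{ij}$ (the case $i=j$ being forced by admissibility of $\widehat{Q}$, which gives $s_{ii}=2=b_{ii}/d_{i}$). Plugging back in, the right-hand side collapses to $\sum_{j}c_{j}(d_{i}s_{ji})=\sum_{j}c_{j}b_{ij}=(h(\beta),\overline{\varepsilon}_{i})_{\Gamma}$.

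For (2), admissibility of the $G$-action on $\widehat{Q}$ makes the simple roots $\{\varepsilon_{i\rho}\mid\rho\in\irr G_{i}\}$ pairwise orthogonal, so the reflections $r_{i\rho}$ commute and $\widehat{S}_{i}(\beta)=\beta-\sum_{\rho}(\beta,\varepsilon_{i\rho})_{\widehat{Q}}\varepsilon_{i\rho}$. Applying $h$ and using $h(\varepsilon_{i\rho})=\overline{\varepsilon}_{i}$ gives
$$
h(\widehat{S}_{i}(\beta))=h(\beta)-\Big(\sum_{\rho}(\beta,\varepsilon_{i\rho})_{\widehat{Q}}\Big)\overline{\varepsilon}_{i},
$$
and part (1) identifies the bracketed scalar with $\frac{1}{d_{i}}(h(\beta),\overline{\varepsilon}_{i})_{\Gamma}$, so the right-hand side is precisely $\gamma_{i}(h(\beta))$ by the definition of $\gamma_{i}$ in Subsection~2.3.

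For (3), I would follow Lemma~\ref{lem3-1}(3). First, each $\widehat{S}_{i}$ lies in $C_{G}(\mathcal{W}(\widehat{Q}))$: from $g\circ r_{i\rho}=r_{g(i\rho)}\circ g$ and the fact that $g$ permutes the orbit $\{(i,\rho)\}$, reindexing the product yields $g\circ\widehat{S}_{i}=\widehat{S}_{i}\circ g$. The same length induction used in Subsection~3.1 for $C_{G}(\mathcal{W}(Q))$ shows the $\widehat{S}_{i}$ generate $C_{G}(\mathcal{W}(\widehat{Q}))$, and by (2) the assignment $\gamma_{i}\mapsto\widehat{S}_{i}$ respects the defining relations of $\mathcal{W}(\Gamma)$; since both groups are generated by these reflections with matching relations, the assignment extends to the desired isomorphism. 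I expect the genuine obstacle in the whole lemma to be the $\sigma$-independence step in (1): once the per-vertex number $s_{ji}$ is seen to be well defined and tied to $b_{ij}$ via the duality of Subsection~3.2, everything downstream is formal.
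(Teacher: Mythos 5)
Your proposal is correct and takes essentially the same approach as the paper: part (1) reduces to the identity $b_{ij}=d_{i}\sum_{\rho\in{\rm irr}G_{i}}a_{(i\rho)(j\sigma)}$ (independent of the choice of $\sigma$), which the paper extracts from the same duality computation of Subsection 3.2 that you invoke, and your arguments for (2) and (3) match the paper's step for step. The only difference is that you explicitly justify the $\sigma$-independence of $\sum_{\rho}a_{(j\sigma)(i\rho)}$ via $G$-equivariance of $\widehat{A}$, a point the paper passes over with an ``and so that''.
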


\begin{proof} (1) By the dual between $(Q, G)$ and $(\widehat{Q}, G)$,
we obtain that
$$
b_{ij}=\sum_{i'\in\mathcal{O}_{i}\atop
j'\in\mathcal{O}_{j}}a_{i'j'}=\frac{d_{i}d_{j}}{|G|}\sum_{\rho\in{\rm
irr} G_{i}\atop\sigma\in{\rm irr} G_{j}}a_{(i\rho)(j\sigma)},
$$
and so that
$$
b_{ij}=d_{i}\sum_{\rho\in{\rm irr} G_{i}}a_{(i\rho)(j\sigma)}
$$
for any $\sigma\in{\rm irr} G_{j}$. Therefore, we get
$$
(h(\beta),
\overline{\epsilon}_{i})_{\Gamma}=\sum_{i,j\in\mathcal{I}}
b_{ij}h(\beta)_{j}=d_{i}\sum_{\rho\in{\rm irr} G_{i}\atop
\sigma\in{\rm irr}
G_{j}}a_{(i\rho)(j\sigma)}\beta_{j\sigma}=d_{i}\sum_{\rho\in{\rm
irr} G_{i}}(\beta, \varepsilon_{i\rho})_{\widehat{Q}}.
$$

(2) Firstly, $\widehat{S}_{i}$ is well-defined since the action of
$G$ on $\widehat{Q}$ is admissible. Secondly, it is easy to check
that the bilinear form $(-,-)_{\widehat{Q}}$ is $G$-invariant and
$\widehat{S}_{i}$ commutes with the action of $G$. Thus, we have
$$
h(\widehat{S}_{i}(\beta))=h(\beta)-\sum_{\rho\in{\rm irr}
G_{i}}(\beta,
\varepsilon_{i\rho})_{\widehat{Q}}\overline{\varepsilon}_{i}
=h(\beta)-\frac{1}{d_{i}}(h(\beta),
\overline{\varepsilon}_{i})_{\Gamma}\overline{\varepsilon}_{i}
=\gamma_{i}(h(\beta)).
$$

(3) By induction on the length, one can check that $C_{G}(\mathcal
{W}(\widehat{Q}))$ is generated by $\widehat{S}_{i}$, $i\in\mathcal
{I}$. Following from (2), we get $\gamma_{i}\mapsto S_{i}$ induces
an isomorphism.
\end{proof}

We are in a position to complete the proof of Theorem \ref{thm1-1}.
We have shown that for any positive root $\alpha\in\Delta_{\Gamma}$,
there exists an indecomposable $\widehat{Q}$-representation X such
that $h({\bf dim}X)=\alpha$. Moreover, if $\alpha$ is real, the
number of $X$ (up to isomorphism) can be determined. Applying the
technique in \cite[Proposition 15]{Hu1}, we have

\begin{proposition} \label{prop3-12}
The map $h : \Delta_{\widehat{Q}}\rightarrow\Delta_{\Gamma}$ is a
surjection. If $\alpha\in\Delta_{\Gamma}$ is a positive real root,
then there is a unique $G$-orbit of roots mapping to $\alpha$, and
all of which are real.
\end{proposition}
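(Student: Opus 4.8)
The plan is to mimic the proof of Lemma~\ref{lem3-2}, replacing the pair $(Q,G)$ by $(\widehat{Q},G)$ and the data of Lemma~\ref{lem3-1} by that of Lemma~\ref{lem3-11}. By Corollary~\ref{cor3-4}, $h$ already sends positive roots of $\widehat{Q}$ to positive roots of $\Gamma$; since $h$ is additive it sends negative roots to negative roots, so $h$ preserves the sign of a root. The engine driving both assertions is the intertwining relation $h(\widehat{S}_i(\beta))=\gamma_i(h(\beta))$ of Lemma~\ref{lem3-11}(2): if $\omega'\in\mathcal{W}(\Gamma)$ corresponds to $\widehat{\omega}\in C_G(\mathcal{W}(\widehat{Q}))$ under the isomorphism of Lemma~\ref{lem3-11}(3), then $h(\widehat{\omega}(\beta))=\omega'(h(\beta))$ for every $\beta$. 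Because $\widehat{\omega}$ lies in $C_G(\mathcal{W}(\widehat{Q}))$ it commutes with the $G$-action and preserves $\Delta_{\widehat{Q}}$ as well as the property of being a real root; hence it transports the whole analysis along $\mathcal{W}(\Gamma)$-orbits, reducing every statement to the fundamental pieces $\{\overline{\varepsilon}_i\}$ and $F_\Gamma$.

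For surjectivity I would first dispose of the real roots. Since $h(\varepsilon_{i\rho})=\overline{\varepsilon}_i$ and every positive real root of $\Gamma$ has the form $\omega'(\overline{\varepsilon}_i)$, the intertwining gives $\omega'(\overline{\varepsilon}_i)=h(\widehat{\omega}(\varepsilon_{i\rho}))$ with $\widehat{\omega}(\varepsilon_{i\rho})$ a real root of $\widehat{Q}$; negative real roots follow by sign. For the imaginary roots it suffices, again by intertwining, to hit each $\beta\in F_\Gamma$. Here I would run the surjectivity argument of Lemma~\ref{lem3-2} on the $Q$-side: it produces $\alpha\in F_Q$ with $f(\Sigma(\alpha))=\beta$, that is $f^{-1}(\beta)=\Sigma(\alpha)$. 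If there exists an indecomposable $\mathbbm{k}Q$-module $M$ with ${\bf dim}\,M=\alpha$ and $H_M=H_\alpha$, then ${\bf dim}\sum(M)=\Sigma(\alpha)$ by Proposition~\ref{prop3-3}, and the corresponding indecomposable $\widehat{Q}$-representation $X$ with $HE^{-1}(X)\cong\sum(M)$ satisfies $h({\bf dim}\,X)=f(\Sigma(\alpha))=\beta$.

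The uniqueness statement for a positive real $\alpha$ is then short. Writing $\alpha=\omega'(\overline{\varepsilon}_i)$ and letting $\widehat{\omega}$ correspond to $\omega'$, the bijection $\beta\mapsto\widehat{\omega}^{-1}(\beta)$ carries $\{\beta\in\Delta_{\widehat{Q}}\mid h(\beta)=\alpha\}$ onto $\{\beta\in\Delta_{\widehat{Q}}\mid h(\beta)=\overline{\varepsilon}_i\}$, preserving $G$-orbits and realness. For a simple-root target the preimage is transparent: a root $\beta$ with $h(\beta)=\overline{\varepsilon}_i>0$ must be positive, so all $\beta_{j\sigma}\ge 0$, and the equations $\sum_{\sigma\in{\rm irr} G_j}\beta_{j\sigma}=\delta_{ij}$ force $\beta=\varepsilon_{i\rho}$ for a single $\rho\in{\rm irr} G_i$. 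Thus the preimage is exactly $\{\varepsilon_{i\rho}\mid\rho\in{\rm irr} G_i\}$, which is one $G$-orbit of real roots; pulling back by $\widehat{\omega}$ yields a single $G$-orbit of real roots over $\alpha$.

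The main obstacle is the geometric input needed in the imaginary case, namely the existence of an indecomposable $\mathbbm{k}Q$-module $M$ with ${\bf dim}\,M=\alpha\in F_Q$ whose stabilizer $H_M$ equals the full stabilizer $H_\alpha$ of its dimension vector, equivalently that the multiplicity $m$ in the proof of Proposition~\ref{prop3-3} equals $1$. For real $\alpha$ this is automatic, but for imaginary $\alpha$ it is precisely the content one imports from Hubery's technique in \cite[Proposition~15]{Hu1}: one studies the action of $H_\alpha$ on the locus of indecomposable representations of dimension vector $\alpha$ and produces an $H_\alpha$-invariant indecomposable, via the polynomial count of (absolutely) indecomposables over finite fields together with a descent and fixed-point argument. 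Establishing this existence, and then checking that the resulting $X$ is genuinely indecomposable, is the step I expect to require the most care.
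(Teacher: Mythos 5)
Your handling of the real roots is correct and is essentially the paper's own argument: the intertwining $h(\widehat{S}_{i}(\beta))=\gamma_{i}(h(\beta))$ of Lemma \ref{lem3-11} transports everything along $\mathcal{W}(\Gamma)$-orbits, and your explicit computation of the fiber of $h$ over a simple root (it is exactly $\{\varepsilon_{i\rho}\mid\rho\in\irr G_{i}\}$, a single $G$-orbit of real roots) yields both surjectivity onto the real roots and the uniqueness assertion. The problem is the imaginary case, and it is a genuine gap. You reduce surjectivity onto $F_{\Gamma}$ to the existence of an indecomposable $\mathbbm{k}Q$-module $M$ with ${\bf dim}M=\alpha\in F_{Q}$ and $H_{M}=H_{\alpha}$, i.e.\ to forcing $m=1$ in the proof of Proposition \ref{prop3-3}. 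Nothing in the paper supplies such an $M$: Proposition \ref{prop3-3} only gives ${\bf dim}\sum(N)=m\Sigma(\alpha)$ with $m\geq1$, and shows $m=1$ only when $\alpha$ is real. For imaginary $\alpha$ the group $H_{\alpha}$ permutes an infinite family of isomorphism classes of indecomposables of dimension vector $\alpha$, and a fixed point is not automatic; producing one is essentially the main theorem of \cite{Hu1}, proved there by counting absolutely indecomposable representations over finite fields together with a descent argument. You invoke this machinery but do not carry it out, so as written the proposal does not prove surjectivity for imaginary roots.

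Moreover, the detour through representation theory is unnecessary: the paper's proof of Proposition \ref{prop3-12} never touches modules. For connected $\Gamma$ and $\alpha\in F_{\Gamma}$ it chooses $\beta$ of maximal height among the positive roots of $\widehat{Q}$ with $h(\beta)\leq\alpha$. Maximality gives $(\beta,\varepsilon_{i\rho})_{\widehat{Q}}\geq0$ at every vertex where $h(\beta)_{i}<\alpha_{i}$, whence $h(\beta)$ and $\alpha$ have the same support; the set $\Phi=\{i\mid h(\beta)_{i}=\alpha_{i}\}$ must be nonempty (otherwise the component of $\widehat{Q}$ containing the support of $\beta$ would be Dynkin, making $\Gamma$ a Dynkin diagram and contradicting that $\alpha$ is imaginary); and any nonempty connected component $T$ of $\Gamma$ minus the full subgraph on $\Phi$ would have to be simultaneously Dynkin, by the signs of $(-,-)_{T}$ on the restriction of $h(\beta)$, and non-Dynkin, by the signs on the restriction of $\alpha-h(\beta)$. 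Hence $T$ is empty and $h(\beta)=\alpha$. This purely root-theoretic argument (this is what the paper means by ``the technique in \cite[Proposition 15]{Hu1}'') is what you should substitute for your unproven fixed-point existence statement; alternatively you would have to actually establish that existence, which is a substantially harder task than the proposition itself.
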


\begin{proof} Firstly, by Corollary \ref{cor3-4}, the map $h :
\Delta_{\widehat{Q}}\rightarrow\Delta_{\Gamma}$ is well-defined. To
show the surjectivity, we need to fine the preimages of all the
fundamental roots in $\Delta_{\widehat{Q}}$.

We suppose that $\Gamma$ is connected. Then, for any $\alpha\in
F_{\Gamma}$, we consider the set
$$
{\mathcal R}:=\{\beta\in\Delta_{\widehat{Q}}\mid\beta \mbox{ is
positive and } h(\beta)\leq\alpha\}.
$$
Since ${\mathcal R}$ is finite and non-empty, we take an element
$\beta$ with maximal height. Suppose that $h(\beta)_{i}<\alpha_{i}$
for all $i\in\mathcal{I}$, then for any $\rho\in\irr G_{i}$,
$h(\beta+\varepsilon_{i\rho})=h(\beta)+\overline{\varepsilon}_{i}\leq\alpha$.
By the maximality of $\beta$, $\beta+\varepsilon_{i\rho}$ is not a
root and so that $(\beta, \varepsilon_{i\rho})_{\widehat{Q}}\geq0$.
Thus $(h(\beta), \overline{\varepsilon}_{i})_{\Gamma}\geq0$ for all
$i\in\mathcal{I}$. We conclude that $h(\beta)$ and $\alpha$ have the
same support, for otherwise, we can find such a vertex $(i, \rho)$
adjacent to the support of $\beta$ such that $(\beta,
\varepsilon_{i\rho})_{\widehat{Q}}<0$.

We take $\alpha\in F_{\Gamma}$ such that the support of $\alpha$ is
${\mathcal I}$, and set
$$
\Phi:=\{i\in\mathcal{I}\mid
h(\beta)_{i}=\alpha_{i}\}.
$$
If $\Phi$ is the empty set, then $\beta+\varepsilon_{i\rho}$ is not
a root for any vertex $(i, \rho)\in\widehat{Q}$, and so that the
connected component of $\widehat{Q}$ which $\beta$ lies in is Dynkin
(see \cite[Proposition 4.9]{Ka}). Therefore, $\widehat{Q}$ must be a
disjoint union of copies of this Dynkin quiver, all in a single
$G$-orbit. Thus $\widehat{Q}$ and $Q$ are representation finite
\cite{RR}, $\Gamma$ is a connected Dynkin diagram. This contradicts
to that $\alpha$ is a imaginary root.

It follows that $\Phi$ is non-empty. We denote by $\widetilde{\Phi}$
the full subgraph of $\Gamma$ determined by $\Phi$. Let $T$ be a
non-empty connected component of $\Gamma-\widetilde{\Phi}$, and let
$\widetilde{\beta}$ be the restriction of $h(\beta)$ to $T$. If
$T\neq\emptyset$, then for all vertices $j\in T$, we have
$(\widetilde{\beta}, \overline{\varepsilon}_{j})_{T}\geq(h(\beta),
\overline{\varepsilon}_{j})_{\Gamma}\geq0$, where $(-,-)_{T}$ is the
restriction of $(-,-)_{\Gamma}$ on $T$. Moreover, note that there
exists a vertex $j\in T$ adjacent to $\widetilde{\Phi}$, we have
$(\widetilde{\beta}, \overline{\varepsilon}_{j})_{T}>0$. Therefore,
$T$ is a Dynkin diagram \cite[Corollary 4.9]{Ka}. On the other hand,
let $\widetilde{\beta}'$ be the restriction of $\alpha-h(\beta)$ to
$T$. Then $\widetilde{\beta}'$ has the support $T$, and for any
vertex $j\in T$,
$$
(\widetilde{\beta}',
\overline{\varepsilon}_{j})_{T}=(\alpha-h(\beta),
\overline{\varepsilon}_{j})_{\Gamma}=(\alpha,
\overline{\varepsilon}_{j})_{\Gamma}-(h(\beta),
\overline{\varepsilon}_{j})_{\Gamma}\leq 0.
$$
Hence $T$ is not
Dynkin. This is a contradiction. Therefore, $T$ is empty,
$\widetilde{\Phi}=\Gamma$ and so that $h(\beta)=\alpha$. Thus, we
have shown that $h$ is surjective by Lemma \ref{lem3-11}(3).

In general, assume that $\Gamma$ is non-connected. In this case,
$F_{\Gamma}=\bigcup F_{\Gamma'}$, where $\Gamma'$ run over all
connected components of $\Gamma$. By the discussion above, we see
that any element $\alpha\in F_{\Gamma}$, there exists an element
$\beta\in\Delta_{\widehat{Q}}$ such that $h(\beta)=\alpha$. Hence,
$h$ is also surjective.

Finally, for any real root $\alpha\in\Delta_{\Gamma}$, we let
$\beta\in\Delta_{\widehat{Q}}$ be the element such that
$h(\beta)=\alpha$. Then, there is an element $\omega'\in\mathcal
{W}(\Gamma)$ and $i\in\mathcal{I}$ such that
$\omega'(\alpha)=\overline{\varepsilon}_{i}$. Let $\omega$ be the
element in $C_{G}(\mathcal{W}(\widehat{Q}))$ corresponding to
$\omega'$. It follows that $\omega(\beta)$ must also be a simple
root $\varepsilon_{i\rho}$ for some $\rho\in\irr G_{i}$. Therefore
$\beta$ is real and uniquely determined up to a $G$-orbit.
\end{proof}

Consider the action of $G$ on $\mathbbm{k}\widehat{Q}$, any $g\in G$
also induces an additive autoequivalence functor $F_{g}:\mbox{{\bf
mod}-}\mathbbm{k}\widehat{Q} \rightarrow \mbox{{\bf
mod}-}\mathbbm{k}\widehat{Q}$, $X \mapsto {^{g}X}$. Here we also
denote by $G_{X}$ a complete set of left coset representatives of
$H_{X}:=\{g\in G\mid F_{g}(X)\cong X\}$ in $G$, for any
$X\in\mbox{{\bf mod}-}\mathbbm{k}\widehat{Q}$. Following from Kac
Theorem, for any positive real root $\beta\in\Delta_{\widehat{Q}}$,
there exists a unique $\widehat{Q}$-representation $X$ such that
${\bf dim}X=\beta$ and $H_{X}=H_{\beta}$. By Proposition
\ref{prop3-12}, there are $|G_{X}|$ indecomposable
$\widehat{Q}$-representations (up to isomorphism) such that the
image of their dimension vector under the map $h$ are $\alpha$, if $h({\bf
dim}X)=\alpha$. Thus the proof of Theorem \ref{thm1-1} is completed.

%%%%%%%%%%%%%%%%%%%%%%%%%%%%%%%%%%%%%%%%%%%%%%%%%%%%%%%%%%%%%%%%%%%%%%%%%%%%

\section{Proof of Theorem \ref{thm1-2}} \label{sect-4}
Assume that $G\subseteq\Aut(\mathbbm{k}Q)$ is a finite abelian
group. In this section, we lift $G$ to
$\overline{G}\subseteq\Aut(\mathfrak{g})$ such that the Kac-Moody
algebra $\mathfrak{g}(\Gamma)$ can be embedded into the fixed point
algebra $\mathfrak{g}^{\overline{G}}$. In this case,
$\mathfrak{g}^{\overline{G}}$ is integrable  as a
$\mathfrak{g}(\Gamma)$-module.

\medskip
Firstly, we recall some notations of Kac-Moody algebras. For a
symmetricable generalized Cartan matrix $C=(c_{ij})$ of size $n$ and
rank $l$, there exist a diagonal matrix
$D=\mbox{diag}(d_{1},\cdots,d_{n})$ and a symmetric matrix
$B=(b_{ij})$  such that $C=D^{-1}B$. In fact, $d_i (1\leq i\leq n)$
may be chosen to be positive integers. Let $\mathfrak{h}$ be a
$2n-l$ dimension $\mathbbm{k}$-vector space. Choose linearly
independent sets
 $\left\{H_i \in\mathfrak{h} | 1\leq i\leq n\right\}$ and
$\left\{\varepsilon_{i} \in\mathfrak{h}^{\ast} | 1\leq i\leq
n\right\}$ such that $\varepsilon_{j}(H_{i})=c_{ij}$. Then the
triple $\left(\mathfrak{h}, \{\varepsilon_{i}\}, \{H_{i}\}\right
)_{1\leq i\leq n}$ is called a (minimal) realization of $C$. Since
any two realizations of $C$ are isomorphic, there is a unique (up to
isomorphism) Kac-Moody algebra $\mathfrak{g}(C)$ generated by
$\mathfrak{h}$, $E_{i}$, $F_{i}$, $1\leq i\leq n$, with relations
$$
\begin{array}{llll} \quad[H, H']=0,\qquad\qquad &[H,
E_{j}]=\varepsilon_{j}(H)E_{j}, \qquad & (\ad
E_{i})^{1-c_{ij}}E_{j}=0,\\
\quad[E_{i}, F_{j}]=\delta_{ij}H_{i}, & [H,
F_{j}]=-\varepsilon_{j}(H)F_{j}, & (\ad F_{i})^{1-c_{ij}}F_{j}=0.
\end{array}
$$
for any $H, H'\in\mathfrak{h}$, where $\delta_{ij}$ is the Kronecker
sign. Moreover, the center $\mathfrak{c}$ of $\mathfrak{g}(C)$ is
given by
$$
\{H\in\mathfrak{h}\mid\varepsilon_{i}(H)=0 \mbox{ for all } 1\leq
i\leq n\}\subseteq [\mathfrak{g}(C), \mathfrak{g}(C)].
$$
For the details one can see \cite{Ka}.

\medskip
For the  pair $(Q, G)$, we have obtained the valued graph $\Gamma$
with symmetricable generalized Cartan matrix $C=(c_{ij})$ of size
$|\mathcal {I}|$ and the generalized McKay quiver $\widehat{Q}$ with
symmetric generalized Cartan matrix
$\widehat{A}=(a_{(i\rho)(j\sigma)})$ of size $|\widehat{I}|$, see
Section \ref{sect-2}. Therefore we have Kac-Moody algebras
$\mathfrak{g}(\Gamma):=\mathfrak{g}(C)$ corresponding to the
realization $\big(\mathfrak{h}(\Gamma),
\{\overline{\varepsilon}_{i}\}, \{\overline{H}_{i}\}\big)$ of $C$
and
$\mathfrak{g}:=\mathfrak{g}(\widehat{Q})=\mathfrak{g}(\widehat{A})$
corresponding to the realization $\big(\mathfrak{h},
\{\varepsilon_{i\rho}\}, \{H_{i\rho}\}\big)$ of $\widehat{A}$.
Denote by $r$ and $s$ the coranks of $C$ and $\widehat{A}$, then
$\dim_{\mathbbm{k}}\mathfrak{h}(\Gamma)=|\mathcal {I}|+r$ and
$\dim_{\mathbbm{k}}\mathfrak{h}=|\widehat{I}|+s$.

We suppose that $\mathfrak{g}(\Gamma)$ generated by
$\mathfrak{h}(\Gamma)$ and $\overline{E}_{i}, \overline{F}_{i}$,
$i\in\mathcal{I}$.  There is a symmetric bilinear form
$(-,-)_{\Gamma}$ on $\mathfrak{h}(\Gamma)$ such that
$$
(\overline{H}_{i},\overline{H})_{\Gamma}=\frac{1}{d_{i}}
\overline{\varepsilon}_{i}(\overline{H})
$$
for all
$\overline{H}\in\mathfrak{h}(\Gamma)$. Then we can extend it
uniquely to an invariant non-degenerate symmetric bilinear form on
$\mathfrak{g}(\Gamma)$ such that
$$
(\overline{E}_{i}, \overline{F}_{i})_{\Gamma}=\frac{1}{d_{i}}.
$$
Moreover, $(-,-)_{\Gamma}$ determines a bijection $\nu :
\mathfrak{h}(\Gamma)\rightarrow\mathfrak{h}^{\ast}(\Gamma)$ sending
$\overline{H}_{i}$ to $\frac{1}{d_{i}}\overline{\varepsilon}_{i}$,
and hence induces a bilinear form on $\mathfrak{h}^{\ast}(\Gamma)$.
We also denote this bilinear form by $(-,-)_{\Gamma}$. Note that
$(\overline{\varepsilon}_{i},
\overline{\varepsilon}_{i})_{\Gamma}=b_{ij}$. It recovers the
bilinear form defined in Section \ref{sect-2}.3 for the root lattice
$\mathbb{Z}\mathcal{I}$. Similarly, there is a symmetric bilinear
form on $\mathfrak{h}^{\ast}=\mathfrak{h}^{\ast}(\what{Q})$ with
$(\varepsilon_{i\rho}, \varepsilon_{j\sigma})_{\widehat{Q}}
=a_{(i\rho)(j\sigma)}.$

\medskip
We now consider the action of $G$ on the quiver $\widehat{Q}$
defined in Section \ref{sect-3}.2. Recall that the derived algebra
$\mathfrak{g}'$ of $\mathfrak{g}$
 is generated by $H_{i\rho}$, $E_{i\rho}$,
$F_{i\rho}$, $(i, \rho)\in\widehat{I}$ and the action of $G$ on
$\widehat{Q}$ satisfies
$$
a_{(i\rho)(j\sigma)}=a_{(i\rho')(j\sigma')},
\quad\hbox{ if } (i, \rho')=g(i, \rho) \hbox{ and } (j,
\sigma')=g(j, \sigma)
$$
for some $g\in G$. Then, there is a natural action of $G$ on
$\mathfrak{g}'$ given by
$$
g(H_{i\rho})=H_{i\rho'},\quad g(E_{i\rho})=E_{i\rho'},
\quad g(F_{i\rho})=F_{i\rho'}
$$
for any $g\in G$. Denote by
$\mathfrak{h}'(\Gamma)$ and $\mathfrak{h}'$ the Cartan subalgebra of
$\mathfrak{g}'(\Gamma):=[\mathfrak{g}(\Gamma),
\mathfrak{g}(\Gamma)]$ and $\mathfrak{g}'$ respectively. It is easy
to see that the map
$$
\phi :\quad
\mathfrak{h}'(\Gamma)\rightarrow(\mathfrak{h}')^{G}
$$
given by
$\phi(\overline{H}_{i})=\sum_{\rho\in{\rm irr} G_{i}}H_{i\rho}$ is
an isomorphism and
$$
(\overline{H},
\overline{H}')_{\Gamma}=\frac{1}{|G|}(\phi(\overline{H}),
\phi(\overline{H}'))_{\widehat{Q}}
$$
for $\overline{H},
\overline{H}\in\mathfrak{h}'(\Gamma)$. In particular, the fixed
point subalgebra $\mathfrak{c}^{G}$ of the center of
$\mathfrak{g}(\widehat{Q})$ is isomorphic to the center
$\mathfrak{c}(\Gamma)$ of $\mathfrak{g}(\Gamma)$.

\medskip
We wish to extend the action of $G$ on $\mathfrak{g}'$ to the whole
Lie algebra $\mathfrak{g}$. Let $\Aut(\widehat{A})$ denote the set
of permutations $g$ of $\widehat{I}$ satisfying
$$
a_{(i\rho)(j\sigma)}=a_{(l\rho')(k\sigma')} \quad\hbox{ if  } (l,
\rho')=g(i, \rho)\hbox{ and } (k, \sigma')=g(j, \sigma).
$$
Let $\DAut(\mathfrak{g})$ denote the subgroup of
$\Aut(\mathfrak{g})$ consisting of the automorphisms preserving each
of the sets $\mathfrak{h}$, $\{E_{i\rho}\}$ and $\{F_{i\rho}\}$.

\begin{proposition} \label{prop4-1}
(see \cite[Section 4.19]{KW}) There is a short exact sequence
$$
0\rightarrow{\rm Hom}_{\mathbbm{k}} (\mathfrak{h}/\mathfrak{h}',
\mathfrak{c})\longrightarrow{\rm
DAut}(\mathfrak{g})\longrightarrow{\rm
Aut}(\widehat{A})\rightarrow0.
$$
\end{proposition}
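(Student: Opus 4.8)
The plan is to build the homomorphism $\pi\colon\DAut(\mathfrak{g})\to\Aut(\widehat{A})$ that reads off how an automorphism permutes the Chevalley generators, to identify $\Ker\pi$ with $\Hom_{\mathbbm{k}}(\mathfrak{h}/\mathfrak{h}',\mathfrak{c})$, and then to prove surjectivity by lifting each $\theta\in\Aut(\widehat{A})$ to $\mathfrak{g}$. For brevity I abbreviate a vertex $(i,\rho)\in\widehat{I}$ by a single index $p$, so $\mathfrak{g}$ is presented by $\mathfrak{h}$ and $E_{p},F_{p}$ ($p\in\widehat{I}$) subject to the relations recalled above, $\mathfrak{h}'=\sum_{p}\mathbbm{k}H_{p}$, and $\mathfrak{c}=\{H\in\mathfrak{h}\mid\varepsilon_{p}(H)=0\ \text{for all }p\}\subseteq\mathfrak{h}'$, the inclusion because $\mathfrak{c}\subseteq[\mathfrak{g},\mathfrak{g}]\cap\mathfrak{h}=\mathfrak{h}'$.

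First I would define $\pi$. Given $\phi\in\DAut(\mathfrak{g})$, preservation of $\{E_{p}\}$ gives a permutation $\theta$ of $\widehat{I}$ with $\phi(E_{p})=E_{\theta(p)}$. Applying $\phi$ to $[H,E_{p}]=\varepsilon_{p}(H)E_{p}$ yields $\varepsilon_{\theta(p)}(\phi(H))=\varepsilon_{p}(H)$ for all $H\in\mathfrak{h}$; writing $\phi(F_{p})=F_{\tau(p)}$, the relation $[E_{p},F_{p}]=H_{p}$ forces $\tau=\theta$ (else $\phi(H_{p})=[E_{\theta(p)},F_{\tau(p)}]=0$) together with $\phi(H_{p})=H_{\theta(p)}$. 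Substituting into $\varepsilon_{q}(H_{p})=a_{pq}$ gives $a_{\theta(p)\theta(q)}=a_{pq}$, so $\theta\in\Aut(\widehat{A})$; put $\pi(\phi)=\theta$. Clearly $\pi$ is a group homomorphism.

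For the kernel, suppose $\pi(\phi)=\id$. Then $\phi$ fixes all $E_{p},F_{p}$, hence fixes $\mathfrak{g}'$ pointwise; in particular $\phi|_{\mathfrak{h}'}=\id$, and since $\mathfrak{g}=\mathfrak{g}'+\mathfrak{h}$ the map $\phi$ is determined by $\phi|_{\mathfrak{h}}$. Preservation of $[H,E_{p}]=\varepsilon_{p}(H)E_{p}$ with $\phi(E_{p})=E_{p}$ forces $\varepsilon_{p}(\phi(H)-H)=0$, i.e. $\psi:=\phi|_{\mathfrak{h}}-\id$ maps $\mathfrak{h}$ into $\mathfrak{c}$ and vanishes on $\mathfrak{h}'$, giving $\bar\psi\in\Hom_{\mathbbm{k}}(\mathfrak{h}/\mathfrak{h}',\mathfrak{c})$. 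Conversely any such $\bar\psi$ defines $\phi=\id+\psi$ on $\mathfrak{h}$ and $\id$ on $\mathfrak{g}'$; as $\psi(\mathfrak{h})\subseteq\mathfrak{c}$ is central and $\psi|_{\mathfrak{h}'}=0$, every relation is preserved and $\phi$ is invertible with inverse $\id-\psi$, so $\phi\in\DAut(\mathfrak{g})$. Using $\psi_{\phi_{2}}(\mathfrak{h})\subseteq\mathfrak{h}'$ one checks $\psi_{\phi_{1}\phi_{2}}=\psi_{\phi_{1}}+\psi_{\phi_{2}}$, so $\phi\mapsto\bar\psi$ is an isomorphism onto $\Hom_{\mathbbm{k}}(\mathfrak{h}/\mathfrak{h}',\mathfrak{c})$.

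The main obstacle is surjectivity of $\pi$. For $\theta\in\Aut(\widehat{A})$ I set $\phi(E_{p})=E_{\theta(p)}$, $\phi(F_{p})=F_{\theta(p)}$ and must produce an \emph{invertible} $\phi|_{\mathfrak{h}}$ with $\phi(H_{p})=H_{\theta(p)}$ and $\varepsilon_{\theta(p)}\circ\phi|_{\mathfrak{h}}=\varepsilon_{p}$. Writing $\Lambda\colon\mathfrak{h}\to\mathbbm{k}^{\widehat{I}}$, $\Lambda(H)=(\varepsilon_{p}(H))_{p}$, which is surjective with kernel $\mathfrak{c}$, and letting $P_{\theta}$ permute coordinates by $\theta$, the constraint reads $\Lambda\circ\phi=P_{\theta}\circ\Lambda$. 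On $\mathfrak{h}'$ the prescription $H_{p}\mapsto H_{\theta(p)}$ already satisfies it (here $a_{\theta(p)\theta(q)}=a_{pq}$ is used); choosing a complement $W$ with $\mathfrak{h}=\mathfrak{h}'\oplus W$ and lifting each basis vector $w$ of $W$ to a preimage under $\Lambda$ of $P_{\theta}\Lambda(w)$ extends $\phi$ linearly so that $\Lambda\phi=P_{\theta}\Lambda$ holds throughout. Invertibility, the delicate point, is exactly where $\mathfrak{c}\subseteq\mathfrak{h}'$ is needed: the identity $\Lambda\phi=P_{\theta}\Lambda$ shows $\phi$ induces the isomorphism $\Lambda^{-1}P_{\theta}\Lambda$ on $\mathfrak{h}/\mathfrak{c}$, while $\phi$ carries $\mathfrak{c}$ bijectively to itself because $\mathfrak{c}\subseteq\mathfrak{h}'$ and $\phi|_{\mathfrak{h}'}$ merely permutes the $H_{p}$ (a direct check, again using $\theta\in\Aut(\widehat{A})$, that the solution space of $\sum_{p}c_{p}a_{pq}=0$ is preserved); a map bijective on both a subspace and the quotient is bijective. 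It remains to verify the relations: the Cartan relations hold by $\varepsilon_{\theta(p)}\circ\phi=\varepsilon_{p}$ and $\phi(H_{p})=H_{\theta(p)}$, the identity $[E_{p},F_{q}]=\delta_{pq}H_{p}$ survives since $\theta$ is a bijection, and the Serre relations survive since $1-a_{\theta(p)\theta(q)}=1-a_{pq}$. Thus $\phi$ extends to an element of $\DAut(\mathfrak{g})$ with $\pi(\phi)=\theta$, its inverse being the lift of $\theta^{-1}$, which establishes exactness of the sequence.
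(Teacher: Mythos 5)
Your proposal is correct, and its first two steps run parallel to the paper: the homomorphism $\pi$ reading off the permutation of the Chevalley generators is the same, and your identification of $\Ker\pi$ with $\Hom_{\mathbbm{k}}(\mathfrak{h}/\mathfrak{h}',\mathfrak{c})$ is the same computation the paper phrases as $\Aut(\mathfrak{g};\mathfrak{g}')\cong\Hom_{\mathbbm{k}}(\mathfrak{h}'',\mathfrak{c})$ via a complement $\mathfrak{h}''$ of $\mathfrak{h}'$. Where you genuinely diverge is surjectivity. The paper lets $\Aut(\widehat{A})$ act on $\Lambda=\mathrm{span}\{\varepsilon_{i\rho}\}$ and, by duality, on $\mathfrak{h}/\mathfrak{c}$; then, invoking the finiteness of $\Aut(\widehat{A})$ (an averaging argument), it fixes \emph{one} complement $\mathfrak{h}''$ such that $(\mathfrak{h}''+\mathfrak{c})/\mathfrak{c}$ is stable under the \emph{whole} group, and defines the lift $\overline{g}$ of every $g$ simultaneously as the pull-back of this action on $\mathfrak{h}''$. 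That construction produces a section $\Aut(\widehat{A})\to\DAut(\mathfrak{g})$ which is a group homomorphism, so the sequence actually splits, and it is precisely this $\Aut(\widehat{A})$-stable $\mathfrak{h}''$, together with the resulting description of liftings of $G$ by families $\varphi_{i}:\mathfrak{h}''\to\mathfrak{c}$, that the rest of Section 4 (Proposition~\ref{prop4-3}, Remark~\ref{rem4-4}, Proposition~\ref{prop4-5}) relies on. You instead lift each $\theta$ separately, choosing arbitrary preimages under the evaluation map $\Lambda:\mathfrak{h}\to\mathbbm{k}^{\widehat{I}}$ and proving invertibility by the subspace/quotient argument on $\mathfrak{c}\subseteq\mathfrak{h}$; one small point to state carefully is that ``the inverse is the lift of $\theta^{-1}$'' only if that lift is chosen with $\phi'|_{\mathfrak{h}}=(\phi|_{\mathfrak{h}})^{-1}$ (with arbitrary choices the composite is merely a kernel element, which is still invertible, so the conclusion stands). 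Your route is more elementary --- it needs neither the finiteness of $\Aut(\widehat{A})$ nor any invariant complement --- and it fully proves the proposition as stated; what it does not deliver is the splitting and the equivariant $\mathfrak{h}''$ that the paper's subsequent arguments quote.
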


\begin{proof} It is easy to see that
$\overline{g}(H_{i\rho})=H_{j\sigma}$,
$\overline{g}(E_{i\rho})=E_{j\sigma}$ and
$\overline{g}(F_{i\rho})=F_{j\sigma}$ for any
$\overline{g}\in\DAut(\mathfrak{g})$. Thus, there exists a unique
permutation $g\in\Aut(\widehat{A})$ corresponding to $\bar{g}$ such
that $(j, \sigma)=g(i, \rho)$. Moreover, each
$g\in\Aut(\widehat{A})$ can be obtained in this way.

Let $\Lambda:=\mathbbm{k}\widehat{I}$ be the subspace of
$\mathfrak{h}^{\ast}$ spanned by $\{\varepsilon_{i\rho}\mid
(i,\rho)\in\widehat{I}\}$. Then there is an natural action of
$\Aut(\widehat{A})$ on $\Lambda$:
$g(\varepsilon_{i\rho})=\varepsilon_{j\sigma}$, where $(j,
\sigma)=g(i, \rho)$, $g\in G$, and it induces an action of
$\Aut(\widehat{A})$ on the quotient space
$\mathfrak{h}/\mathfrak{c}$ since $\mathfrak{h}/\mathfrak{c}$ is
dual to $\Lambda$. It maps $H_{i\rho}\mod{\mathfrak{c}}$ to
$H_{j\sigma}\mod{\mathfrak{c}}$, and so that
$\mathfrak{h}'/\mathfrak{c}$ is $\Aut(\widehat{A})$-stable. Since
$\Aut(\widehat{A})$ is finite, there exists $\mathfrak{h}''$ such
that $\mathfrak{h}=\mathfrak{h}'\oplus\mathfrak{h}''$ and
$(\mathfrak{h}''+\mathfrak{c})/\mathfrak{c}$ is
$\Aut(\widehat{A})$-stable. For any $g\in\Aut(\widehat{A})$, we can
define an automorphism $\overline{g}\in\DAut(\mathfrak{g})$ by
$$
\overline{g}(H_{i\rho})=H_{j\sigma},\quad
\overline{g}(E_{i\rho})=E_{j\sigma} \quad\hbox{ and }\quad
\overline{g}(F_{i\rho})=F_{j\sigma},
$$
and $\overline{g}|_{\mathfrak{h}''}$ is the pull-back of $g$ on
$(\mathfrak{h}''+\mathfrak{c})/\mathfrak{c}$.

Obviously, the kernel of the map
$\DAut(\mathfrak{g})\rightarrow\Aut(\widehat{A})$ is the subgroup
$\Aut(\mathfrak{g}; \mathfrak{g}')$ consisting of all automorphisms
acting trivially on $\mathfrak{g}'$. One can check that an
automorphism $\alpha\in\Aut(\mathfrak{g}; \mathfrak{g}')$ if and
only if there exists a map $\varphi :
\mathfrak{h}''\rightarrow\mathfrak{c}$ such that
$\alpha(H)=H+\varphi(H)$ for all $H\in\mathfrak{h}''$. Thus, there
are isomorphisms $\Aut(\mathfrak{g};
\mathfrak{g}')\cong\Hom_{\mathbbm{k}}(\mathfrak{h}'',
\mathfrak{c})\cong\Hom_{\mathbbm{k}}(\mathfrak{h}/\mathfrak{h}',
\mathfrak{c})$.
\end{proof}

Therefore, for each $\alpha\in\Aut(\mathfrak{g}; \mathfrak{g}')$ and
$g\in\Aut(\widehat{A})$, we have an element
$\overline{g}\in\DAut(\mathfrak{g})$ by setting
$\overline{g}|_{\mathfrak{g}'}=g$ and
$\overline{g}|_{\mathfrak{h}''}=\alpha$. Moreover, for any
$\alpha\in\Aut(\mathfrak{g}; \mathfrak{g}')$ corresponding to
$\varphi : \mathfrak{h}''\rightarrow\mathfrak{c}$, it is easy to see
that $\alpha^{t}(H)=H+t\varphi(H)$ for any $t\in\mathbb{Z}$ and
$H\in\mathfrak{h}''$. That is to say, an automorphism
$\alpha\in\Aut(\mathfrak{g}; \mathfrak{g}')$ has finite order if and
only if the corresponding map $\varphi :
\mathfrak{h}''\rightarrow\mathfrak{c}$ is zero.

\medskip
We now fix $\Omega=\{g_{1}, g_{2},\cdots,g_{n}\}$ a set of
generators of $G$. We can view $G$ as a finite abelian subgroup of
$\Aut(\widehat{A})$. By Proposition \ref{prop4-1}, we can lift $G$
to an automorphism group $\overline{G}=\{\overline{g}\mid g\in G\}$
of $\mathfrak{g}$ corresponding to a set of linear maps
$\{\varphi_{i}=\varphi_{g_{i}} :
\mathfrak{h}''\rightarrow\mathfrak{c}\mid g_{i}\in\Omega\}$. It is
easy to see that for any $H\in\mathfrak{h}$, we have
$\varepsilon_{i\rho'}(\overline{g}(H))=\varepsilon_{i\rho}(H)$ if
$(i, \rho')=g(i, \rho)$. Let
$$
\mathcal{S}:=\mbox{span}\{\varepsilon_{i\rho}-\varepsilon_{i\rho'}
\mid i\in\mathcal{I}, ~\rho, \rho'\in \irr
G_{i}\}\subseteq\mathfrak{h}^{\ast}
$$
and
$$
\mathcal{H}:=\{H\in\mathfrak{h}\mid\varepsilon_{i\rho}(H)
=\varepsilon_{i\rho'}(H)\mbox{ for all } \rho, \rho'\in \irr G_{i},
\mbox{ and } i\in\mathcal{I}\}
=\mbox{ann}_{\mathfrak{h}}\mathcal{S}.
$$
Then $\mathcal{H}$ contains the center $\mathfrak{c}$,
$\mathcal{H}/\mathfrak{c}=(\mathfrak{h}/\mathfrak{c})^{G}$ and so
that, for any lifting $\overline{G}$ of $G$,
$\mathcal{H}^{\overline{G}}=\mathfrak{h}^{\overline{G}}.$

\begin{lemma} \label{lem4-2}
$\mathcal{H}$ has $\mathbbm{k}$-dimension $|\mathcal {I}|+s$,
~$\mathcal{H}\cap\mathfrak{h}'$ has $\mathbbm{k}$-dimension
$|\mathcal {I}|+s-r$ and therefore $\mathcal{H}\cap\mathfrak{h}''$
has $\mathbbm{k}$-dimension $r$.
\end{lemma}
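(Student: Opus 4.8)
The plan is to reduce all three dimension counts to computations with $G$-invariant subspaces, exploiting that $\mbox{char}\,\mathbbm{k}\nmid|G|$ (so that taking $G$-invariants is an exact functor and respects direct sums of $G$-modules), together with the three facts already recorded above: the isomorphism $\phi:\mathfrak{h}'(\Gamma)\to(\mathfrak{h}')^{G}$ giving $\dim(\mathfrak{h}')^{G}=|\mathcal{I}|$, the isomorphism $\mathfrak{c}^{G}\cong\mathfrak{c}(\Gamma)$ giving $\dim\mathfrak{c}^{G}=r$, and the identity $\mathcal{H}/\mathfrak{c}=(\mathfrak{h}/\mathfrak{c})^{G}$. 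I will also use the elementary dimensions $\dim\mathfrak{h}=|\widehat{I}|+s$, $\dim\mathfrak{h}'=|\widehat{I}|$ and $\dim\mathfrak{c}=s$.

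For part (1) I would first compute $\dim\mathcal{S}$. Since the simple roots $\varepsilon_{i\rho}$ are linearly independent in $\mathfrak{h}^{\ast}$ and, for each fixed $i$, the differences $\varepsilon_{i\rho}-\varepsilon_{i\rho'}$ span exactly the codimension-one subspace of $\mbox{span}\{\varepsilon_{i\rho}\mid\rho\in\irr G_{i}\}$ consisting of the functionals whose coefficients sum to zero, we get $\dim\mathcal{S}=\sum_{i\in\mathcal{I}}(|\irr G_{i}|-1)=|\widehat{I}|-|\mathcal{I}|$. Because $\mathcal{H}=\mbox{ann}_{\mathfrak{h}}\mathcal{S}$ and the pairing $\mathfrak{h}\times\mathfrak{h}^{\ast}\to\mathbbm{k}$ is perfect, $\dim\mathcal{H}=\dim\mathfrak{h}-\dim\mathcal{S}=(|\widehat{I}|+s)-(|\widehat{I}|-|\mathcal{I}|)=|\mathcal{I}|+s$, which is (1).

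For part (2) I would pass to $\mathfrak{h}/\mathfrak{c}$. Since $\mathfrak{c}\subseteq\mathfrak{h}'$ and $\mathfrak{c}\subseteq\mathcal{H}$, the subspace $\mathcal{H}\cap\mathfrak{h}'$ contains $\mathfrak{c}$ and $(\mathcal{H}\cap\mathfrak{h}')/\mathfrak{c}=(\mathcal{H}/\mathfrak{c})\cap(\mathfrak{h}'/\mathfrak{c})=(\mathfrak{h}/\mathfrak{c})^{G}\cap(\mathfrak{h}'/\mathfrak{c})=(\mathfrak{h}'/\mathfrak{c})^{G}$. Applying $(-)^{G}$ to $0\to\mathfrak{c}\to\mathfrak{h}'\to\mathfrak{h}'/\mathfrak{c}\to0$ gives $\dim(\mathfrak{h}'/\mathfrak{c})^{G}=\dim(\mathfrak{h}')^{G}-\dim\mathfrak{c}^{G}=|\mathcal{I}|-r$. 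Hence $\dim(\mathcal{H}\cap\mathfrak{h}')=\dim\mathfrak{c}+\dim(\mathfrak{h}'/\mathfrak{c})^{G}=s+(|\mathcal{I}|-r)=|\mathcal{I}|+s-r$, proving (2).

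For part (3) the essential point, and the step I expect to be the main obstacle, is that $\mathcal{H}$ need not be homogeneous for the non-canonical splitting $\mathfrak{h}=\mathfrak{h}'\oplus\mathfrak{h}''$, so $\dim(\mathcal{H}\cap\mathfrak{h}'')$ is not formally $\dim\mathcal{H}-\dim(\mathcal{H}\cap\mathfrak{h}')$. I would resolve this by using the specific $\mathfrak{h}''$ produced in Proposition \ref{prop4-1}, for which $W:=(\mathfrak{h}''+\mathfrak{c})/\mathfrak{c}$ is $G$-stable, and by working modulo $\mathfrak{c}$. Because $\mathfrak{c}\subseteq\mathfrak{h}'$, the projection $\mathfrak{h}\to\mathfrak{h}/\mathfrak{c}$ restricts to an isomorphism $\mathfrak{h}''\to W$; under it $\mathcal{H}\cap\mathfrak{h}''$ corresponds to $W\cap(\mathfrak{h}/\mathfrak{c})^{G}=W^{G}$, using $\mathcal{H}/\mathfrak{c}=(\mathfrak{h}/\mathfrak{c})^{G}$ and $\mathfrak{c}\subseteq\mathcal{H}$. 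Finally the $G$-stable decomposition $\mathfrak{h}/\mathfrak{c}=(\mathfrak{h}'/\mathfrak{c})\oplus W$ yields $(\mathfrak{h}/\mathfrak{c})^{G}=(\mathfrak{h}'/\mathfrak{c})^{G}\oplus W^{G}$, whence $\dim(\mathcal{H}\cap\mathfrak{h}'')=\dim W^{G}=\dim(\mathfrak{h}/\mathfrak{c})^{G}-\dim(\mathfrak{h}'/\mathfrak{c})^{G}$. Since $\dim(\mathfrak{h}/\mathfrak{c})^{G}=\dim\mathcal{H}-\dim\mathfrak{c}=|\mathcal{I}|$ by part (1), this equals $|\mathcal{I}|-(|\mathcal{I}|-r)=r$, giving (3).
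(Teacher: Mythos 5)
Your proof is correct and follows essentially the same route as the paper: part (1) via the basis $\{\varepsilon_{i\rho}-\varepsilon_{i\rho'}\}$ of $\mathcal{S}$ and the count $\dim\mathcal{S}=|\widehat{I}|-|\mathcal{I}|$, and part (2) via $(\mathcal{H}\cap\mathfrak{h}')/\mathfrak{c}=(\mathfrak{h}'/\mathfrak{c})^{G}\cong(\mathfrak{h}')^{G}/\mathfrak{c}^{G}$ together with $\dim(\mathfrak{h}')^{G}=|\mathcal{I}|$, $\dim\mathfrak{c}^{G}=r$, $\dim\mathfrak{c}=s$. The one place you go beyond the paper is part (3): where the paper simply writes ``and therefore,'' you correctly note that the subtraction is not formal for an arbitrary complement $\mathfrak{h}''$ (since $\mathcal{H}$ need not split along $\mathfrak{h}'\oplus\mathfrak{h}''$), and you justify it by passing to $\mathfrak{h}/\mathfrak{c}$ and using the $G$-stability of $(\mathfrak{h}''+\mathfrak{c})/\mathfrak{c}$ guaranteed in Proposition \ref{prop4-1} to get $\mathcal{H}\cap\mathfrak{h}''\cong W^{G}$; this is a worthwhile gap-fill of the paper's terse final step rather than a different method.
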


\begin{proof}
Note that
$$
\{\varepsilon_{i\rho}-\varepsilon_{i\rho'}\mid i\in\mathcal{I},
~\rho'\in\irr G_{i}\setminus\rho\}
$$
is a basis of $\mathcal{S}$, we obtain that
$\dim_{\mathbbm{k}}\mathcal{H}=\dim_{\mathbbm{k}}\mathfrak{h}
-\dim_{\mathbbm{k}}\mathcal{S}=|\mathcal{I}|+s$. Since
$(\mathcal{H}\cap\mathfrak{h}')/\mathfrak{c}
=(\mathfrak{h}'/\mathfrak{c})^{G}$ is isomorphic to
$(\mathfrak{h}')^{G}/\mathfrak{c}^{G}$,
~$\dim_{\mathbbm{k}}(\mathfrak{h}')^{G}=|\mathcal{I}|$ and
$\dim_{\mathbbm{k}}\mathfrak{c}^{G}
=\dim_{\mathbbm{k}}\mathfrak{c}(\Gamma)=r$,
$\mathcal{H}\cap\mathfrak{h}'$ has $\mathbbm{k}$-dimension
$|\mathcal {I}|+s-r$ and so that $\mathcal{H}\cap\mathfrak{h}''$ has
$\mathbbm{k}$-dimension $r$.
\end{proof}

\begin{proposition} \label{prop4-3}
Let $\overline{G}$ be a lifting of $G$
to $\mathfrak{g}$ corresponding to $\{\varphi_{i} :
\mathfrak{h}''\rightarrow\mathfrak{c}\mid 1\leq i\leq n\}$. Then
$$
\bigg(\mathcal{H}^{\overline{G}},
\bigg\{\frac{d_{i}}{|G|}\sum_{\rho\in {\rm
irr}G_{i}}\varepsilon_{i\rho}\bigg\}, \bigg\{\sum_{\rho\in {\rm
irr}G_{i}}H_{i\rho}\bigg\}\bigg)
$$
is a realization of $C$ if and
only if $\varphi_{i}(\mathcal{H}\cap\mathfrak{h}'')=0$ for all
$1\leq i\leq n$.
\end{proposition}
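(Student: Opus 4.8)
The plan is to verify directly the three defining properties of a (minimal) realization of $C$ for the triple $\big(\mathcal{H}^{\overline{G}},\{\epsilon_i\},\{h_i\}\big)$, where $\epsilon_i=\frac{d_i}{|G|}\sum_{\rho\in\irr G_i}\varepsilon_{i\rho}$ and $h_i=\sum_{\rho\in\irr G_i}H_{i\rho}$, and to isolate the single property that forces the condition on the $\varphi_i$. Two of the three properties hold unconditionally. First, using the duality relation $b_{ij}=\frac{d_id_j}{|G|}\sum_{\rho,\sigma}a_{(i\rho)(j\sigma)}$ from Section 3.2 together with $|\irr G_j|=|G_j|=|G|/d_j$, I would compute $\epsilon_j(h_i)=\frac{d_j}{|G|}\sum_{\rho,\sigma}a_{(i\rho)(j\sigma)}=\frac{b_{ij}}{d_i}=c_{ij}$, so the pairing condition $\epsilon_j(h_i)=c_{ij}$ is automatic. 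Second, since $h_i=\phi(\overline{H}_i)$ for the isomorphism $\phi:\mathfrak{h}'(\Gamma)\to(\mathfrak{h}')^{G}$ and the $\overline{H}_i$ are linearly independent, the $h_i$ are linearly independent; moreover $h_i\in(\mathfrak{h}')^{G}=(\mathfrak{h}')^{\overline{G}}\subseteq\mathfrak{h}^{\overline{G}}=\mathcal{H}^{\overline{G}}$, so they genuinely lie in the space. Everything therefore reduces to the two remaining requirements: that $\dim_{\mathbbm{k}}\mathcal{H}^{\overline{G}}=2|\mathcal{I}|-l=|\mathcal{I}|+r$ and that $\{\epsilon_i|_{\mathcal{H}^{\overline{G}}}\}$ be linearly independent.

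To attack these I would compute the fixed space $\mathcal{H}^{\overline{G}}=\mathfrak{h}^{\overline{G}}$ explicitly. By Lemma \ref{lem4-2} there is a direct sum $\mathcal{H}=(\mathcal{H}\cap\mathfrak{h}')\oplus(\mathcal{H}\cap\mathfrak{h}'')$ with $\dim(\mathcal{H}\cap\mathfrak{h}'')=r$, and $\mathcal{H}\cap\mathfrak{h}''$ is exactly the geometric $G$-fixed part of $\mathfrak{h}''$. Writing $H=H'+H''$ and using that $\overline{g}_i$ acts geometrically on $\mathfrak{h}'$ while $\overline{g}_i(H'')=H''+\varphi_i(H'')$ with $\varphi_i(H'')\in\mathfrak{c}\subseteq\mathfrak{h}'$, the fixed-point equations split into $H''\in\mathcal{H}\cap\mathfrak{h}''$ together with $(1-g_i)H'=\varphi_i(H'')$ for every generator $g_i$. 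The kernel of the projection $\mathfrak{h}^{\overline{G}}\to\mathcal{H}\cap\mathfrak{h}''$ is precisely $(\mathfrak{h}')^{G}$ (dimension $|\mathcal{I}|$) and its image lies in $\mathcal{H}\cap\mathfrak{h}''$, giving the a priori bound $\dim\mathcal{H}^{\overline{G}}\le|\mathcal{I}|+r$.

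If $\varphi_i(\mathcal{H}\cap\mathfrak{h}'')=0$ for all $i$, then each $H''\in\mathcal{H}\cap\mathfrak{h}''$ is already $\overline{G}$-fixed, so $\mathcal{H}^{\overline{G}}=(\mathfrak{h}')^{G}\oplus(\mathcal{H}\cap\mathfrak{h}'')$ attains the bound $|\mathcal{I}|+r$. I would then obtain the independence of the $\epsilon_i$ as follows: any relation $\sum_i\lambda_i\epsilon_i=0$ restricted to $(\mathfrak{h}')^{G}=\mathrm{span}\{h_j\}$ forces $\lambda\in\ker C$ (via $\epsilon_j(h_i)=c_{ij}$), and for $\lambda\in\ker C$ the $G$-invariance of each $\epsilon_i$ in $\mathfrak{h}^{\ast}$ makes $\sum_i\lambda_i\epsilon_i$ vanish on all of $\mathfrak{h}'$; hence the relation descends to a relation on $\mathcal{H}\cap\mathfrak{h}''$, and the induced pairing between the two $r$-dimensional spaces $\ker C$ and $\mathcal{H}\cap\mathfrak{h}''$ is non-degenerate (which I would check by transporting the non-degenerate invariant form on $\mathfrak{g}(\Gamma)$ of Section \ref{sect-4} through $\phi$), so $\lambda=0$. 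This shows the triple is a realization. For the converse I would argue contrapositively: applying the averaging idempotent $e_G=\frac{1}{|G|}\sum_{g\in G}g$ to $(1-g_i)H'=\varphi_i(H'')$ gives $e_G\varphi_i(H'')=0$, so a liftable $H''$ must have $\varphi_i(H'')$ with no $G$-fixed component; thus whenever some $\varphi_i(H'')$ has a non-zero $\mathfrak{c}^{G}$-component the projection image is a proper subspace of $\mathcal{H}\cap\mathfrak{h}''$, forcing $\dim\mathcal{H}^{\overline{G}}<|\mathcal{I}|+r$ and breaking the dimension requirement.

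The step I expect to be the main obstacle is exactly this converse, whose delicacy lies in the interaction between the $\varphi_i$ and the $G$-action on the centre $\mathfrak{c}$. The averaging argument cleanly handles a $\varphi_i(H'')$ with non-trivial $\mathfrak{c}^{G}$-part, but it does not by itself rule out a non-zero $\varphi_i(H'')$ lying in the complementary (non-fixed) part of $\mathfrak{c}$, where the fixed-point dimension can a priori remain $|\mathcal{I}|+r$; since the lift $\overline{G}$ need not be finite (an $\overline{g}_i$ with $\varphi_i\neq0$ may have infinite order, so the usual Maschke exactness making the dimension automatic is unavailable), one must show carefully, using the precise class of admissible liftings together with the commuting relations among the $\overline{g}_i$, that such a $\varphi_i$ still obstructs the simultaneous solvability of $(1-g_i)H'=\varphi_i(H'')$ or else the non-degeneracy of the $\ker C$–$(\mathcal{H}\cap\mathfrak{h}'')$ pairing. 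Establishing that non-degeneracy explicitly via the bilinear form on $\mathfrak{g}(\Gamma)$ is the secondary technical point I would settle first, as it is needed in both directions.
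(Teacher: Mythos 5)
Your unconditional verifications (the pairing $\epsilon_{j}(h_{i})=c_{ij}$, independence of the $h_{i}$) and your ``if'' direction are correct and essentially the paper's own argument: the paper also splits $\mathcal{H}=(\mathcal{H}\cap\mathfrak{h}')\oplus(\mathcal{H}\cap\mathfrak{h}'')$ and takes as complementary fixed vectors a basis of $\mathcal{H}\cap\mathfrak{h}''$ whose classes mod $\mathfrak{c}$ are $G$-fixed. One repair: your plan to get the non-degeneracy of the pairing between $\ker C$ and $\mathcal{H}\cap\mathfrak{h}''$ by ``transporting the invariant form through $\phi$'' cannot work as stated, since $\phi$ has image $(\mathfrak{h}')^{G}$ and never sees $\mathcal{H}\cap\mathfrak{h}''$. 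The argument that works (and is the paper's) is elementary: if $\epsilon=\sum_{i}\lambda_{i}\epsilon_{i}$ kills $\mathcal{H}^{\overline{G}}$, then $C\lambda=0$, hence $\epsilon$ kills every $H_{j\sigma}$ and so all of $\mathfrak{h}'$, hence all of $\mathcal{H}+\mathfrak{h}'$; thus $\epsilon\in\mbox{ann}_{\mathfrak{h}^{\ast}}(\mathcal{H})=\mathcal{S}$, and a $G$-invariant element of $\mathcal{S}$ vanishes because its coefficients are constant on each orbit $\{\varepsilon_{i\rho}\}_{\rho\in\irr G_{i}}$ while summing to zero there.

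The genuine gap is the converse, exactly where you located it --- and it is unfillable, because the ``only if'' direction is false as stated. Your averaging computation already extracts all the information there is: a fixed vector $H'+H''$ only forces $e_{G}\varphi_{i}(H'')=0$ (with $e_{G}=\frac{1}{|G|}\sum_{g\in G}g$), i.e.\ $\varphi_{i}(H'')$ has no component in $\mathfrak{c}^{G}$, and values of $\varphi_{i}$ in the $G$-moving part of $\mathfrak{c}$ genuinely do not obstruct the realization. Concretely, inside the paper's setup: let $Q$ be the Kronecker quiver and $G=\mathbb{Z}/2\mathbb{Z}=\langle g\rangle$ act by $-1$ on the two arrows (this is admissible); then $\widehat{Q}$ is a disjoint union of two Kronecker quivers interchanged by the induced action, $C=\left(\begin{smallmatrix}2&-2\\-2&2\end{smallmatrix}\right)$, $r=1$, and $\mathfrak{c}$ has a basis $c_{1},c_{2}$ (the null elements of the two components) with $g(c_{1})=c_{2}$. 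Write $\mathfrak{h}''=\mbox{span}\{u,u'\}$ where the geometric (pull-back) action sends $u\mapsto u'$, so that $\mathcal{H}\cap\mathfrak{h}''=\mathbbm{k}(u+u')$. The lift $\overline{g}(u)=u'+c_{1}$, $\overline{g}(u')=u-c_{2}$, geometric on $\mathfrak{g}'$, satisfies $\overline{g}^{2}=\mbox{id}$ (so $\overline{G}\cong G$ is even a finite lifting), and its $\varphi$ has $\varphi(u+u')=c_{1}-c_{2}\neq0$; nevertheless $w:=u+u'+\frac{1}{2}(c_{1}-c_{2})$ is $\overline{g}$-fixed, $\mathcal{H}^{\overline{G}}=(\mathfrak{h}')^{G}\oplus\mathbbm{k}w$ has dimension $|\mathcal{I}|+r=3$, and $\epsilon_{1},\epsilon_{2}$ stay independent on it ($\epsilon_{1}(w)=1$, $\epsilon_{2}(w)=0$). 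So the triple is a realization of $C$ although $\varphi(\mathcal{H}\cap\mathfrak{h}'')\neq0$.

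For comparison, the paper's own proof of this direction breaks at precisely the point you flagged: it asserts $t\varphi_{l}(h''_{i})=\sum_{(j,\sigma)}q_{i(j\sigma)}(H_{j\sigma}-H_{j\sigma^{t}})$, which tacitly assumes $\varphi_{l}(h''_{i})$ is fixed by $\overline{g}_{l}$; since $\mathfrak{c}$ is only $G$-stable, not pointwise fixed, the correct iteration puts $\sum_{k=0}^{t-1}\overline{g}_{l}^{\,k}\bigl(\varphi_{l}(h''_{i})\bigr)$ on the left, and taking $t$ to be the order of $g_{l}$ yields only that the average of $\varphi_{l}(h''_{i})$ vanishes --- exactly your conclusion, and the sharpest one possible (in the example above $\varphi(h''_{1})=c_{1}-c_{2}$ and indeed $(1+g)(c_{1}-c_{2})=0$). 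So your diagnosis is more accurate than the paper: the condition equivalent to the realization property is not $\varphi_{i}(\mathcal{H}\cap\mathfrak{h}'')=0$ but the solvability of your equations $(1-g_{i})H'=\varphi_{i}(H'')$ for all $H''\in\mathcal{H}\cap\mathfrak{h}''$. Only the ``if'' direction is used afterwards (Theorem \ref{thm1-2} assumes $\psi_{i}\bigl((\mathcal{H}+\mathfrak{h}')/\mathfrak{h}'\bigr)=0$), so the remainder of the paper is unaffected, but the Proposition itself should be stated as an implication, not an equivalence.
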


\begin{proof} We denote by
$$
H_{i}:=\sum_{\rho\in {\rm
irr}G_{i}}H_{i\rho} \quad\hbox{ and }\quad
\epsilon_{i}:=\frac{d_{i}}{|G|}\sum_{\rho\in {\rm
irr}G_{i}}\varepsilon_{i\rho}
$$
for all $i\in\mathcal{I}$. Since
$\{H_{i}\mid i\in\mathcal{I}\}$ is a basis of
$(\mathcal{H}\cap\mathfrak{h}')^{G}$, $\mathcal{H}^{\overline{G}}$
has dimension $|\mathcal{I}|+r$ if and only if there are $h'_{1},
h'_{2}, \cdots, h'_{r}\in\mathcal{H}^{\overline{G}}$ spanning the
complementary space of $(\mathcal{H}\cap\mathfrak{h}')^{G}$ in
$\mathcal{H}^{\overline{G}}$.

Since $(\mathfrak{h}''+\mathfrak{c})/\mathfrak{c}$ is $G$-stable,
 $\big((\mathfrak{h}''+\mathfrak{c})/\mathfrak{c}\big)^{G}$
has $\mathbbm{k}$-dimension $r$ by Lemma \ref{lem4-2}. We can find
linearly independent elements $h''_{1}, h''_{2}, \cdots, h''_{r}
\in\mathcal{H}\cap\mathfrak{h}''$ such that $h''_{i}\mod{
\mathfrak{c}}$ are fixed by $G$. Since
$\varphi_{i}(\mathcal{H}\cap\mathfrak{h}'')=0$ for all $i$,
$h''_{1}, h''_{2}, \cdots, h''_{r}$ are $G$-stable and form a basis
of $\mathcal{H}\cap\mathfrak{h}''$. Therefore, we take
$h'_{i}=h''_{i}$ for all $1\leq i\leq r$. On the other hand, if we
can find such elements $h'_{1}, h'_{2}, \cdots, h'_{r}$, then each
$h''_{i}$ has the form
$$
h''_{i}=\sum_{j=1}^{s}p_{ij}h'_{j}-\sum_{(j,
\sigma)\in\widehat{I}} q_{i(j\sigma)}H_{j\sigma}
$$
for some $p_{ij}, q_{i(j\sigma)}\in\mathbbm{k}$, and
$$
\begin{aligned}\varphi_{l}(h''_{i})
&=\overline{g}_{l}\Big(\sum_{j=1}^{s}p_{ij}h'_{j} -\sum_{(j,
\sigma)\in\widehat{I}} q_{i(j\sigma)}H_{j\sigma}\Big)
-\sum_{j=1}^{s}p_{ij}h'_{j}+\sum_{(j, \sigma)\in\widehat{I}}
q_{i(j\sigma)}H_{j\sigma}\nonumber\\
&=\sum_{(j, \sigma)\in\widehat{I}}
q_{i(j\sigma)}(H_{j\sigma}-H_{j\sigma^{1}}),
\end{aligned}
$$
where $(j, \sigma^{1})=g_{l}(j, \sigma)$. It follows that
$$
t\varphi_{l}(h''_{i})=\sum_{(j, \sigma)\in\widehat{I}}
q_{i(j\sigma)}(H_{j\sigma}-H_{j\sigma^{t}})
$$
for any $t\in\mathbb{Z}$, where $(j, \sigma^{t})=g_{l}^{t}(j,
\sigma)$. Note that $\widehat{I}$ is a finite set, there exist some
$t\in\mathbb{Z}$ such that $g_l^{t}(j, \sigma)=(j, \sigma)$ for all
$(j, \sigma)\in\widehat{I}$, and so that $t\varphi_{l}(h''_{i})=0$,
$\varphi_{l}(h''_{i})=0$ for all $i$ and $l$. Thus
$\varphi_{i}(\mathcal{H}\cap\mathfrak{h}'')=0$ for any $1\leq i\leq
n$.

Since
$$
\epsilon_{j}(H_{i})=\frac{d_{i}}{|G|} \sum_{\rho\in{\rm irr}
G_{i}\atop\sigma\in{\rm irr}
G_{j}}\varepsilon_{j\sigma}(H_{i\rho})=\frac{d_{i}}{|G|}
\sum_{\rho\in{\rm irr} G_{i}\atop\sigma\in{\rm irr}
G_{j}}a_{(i\rho)(j\sigma)}=c_{ij}
$$
and $H_{i}$ $(i\in\mathcal{I})$ are linearly independent, it remains
to show $\epsilon_{i}$, $i\in\mathcal{I}$ are linearly independent
modulo $\mbox{ann}_{\mathfrak{h}^{\ast}}({\mathcal
{H}}^{\overline{G}})$. Let
$$
\epsilon:=\sum_{j\in\mathcal{I}}\mu_{j}\epsilon_{j}\in
\mbox{ann}_{\mathfrak{h}^{\ast}}({\mathcal {H}}^{\overline{G}}),
\quad \mu_{j}\in\mathbbm{k}.
$$
Then
$$
0=\epsilon(H_{i})=\sum_{j\in\mathcal{I}}\mu_{j}\epsilon_{j}(H_{i})
=\sum_{j\in\mathcal{I}}c_{ij}\mu_{j}
$$
for all $i\in\mathcal{I}$, and so that
$$
\epsilon(H_{i\rho})=\sum_{j\in\mathcal{I}}\mu_{j}\epsilon_{j}(H_{i\rho})
=\frac{1}{|G|}\sum_{j\in\mathcal{I}}b_{ij}\mu_{j}=\frac{d_{i}}{|G|}
\sum_{j\in\mathcal{I}}c_{ij}\mu_{j}=0
$$
for all $(i, \rho)\in\widehat{I}$. Therefore,
$$
\epsilon\in\mbox{ann}_{\mathfrak{h}^{\ast}} ({\mathcal
H}^{\overline{G}}+\mathfrak{h}')
=\mbox{ann}_{\mathfrak{h}^{\ast}}({\mathcal
H}+\mathfrak{h}')\subseteq
\mbox{ann}_{\mathfrak{h}^{\ast}}({\mathcal H})=\mathcal{S},
$$
and
$$
g(\epsilon)=g\bigg(\sum_{j\in\mathcal{I}}\mu_{j}\epsilon_{j}\bigg)
=g\bigg(\sum_{j\in\mathcal{I}}\frac{d_{j}\mu_{j}}{|G|}
\sum_{\sigma\in{\rm irr}
G_{j}}\varepsilon_{j\sigma}\bigg)=\epsilon
$$
for any $g\in G$. It concludes that
$\epsilon=\sum_{j\in\mathcal{I}}\mu_{j}\epsilon_{j}=0$ by the
construction of $\mathcal{S}$. Therefore, $\mu_{j}=0$ for all
$j\in\mathcal{I}$, and so that $\epsilon_{j}$ are linearly
independent in $\mathfrak{h}^{\ast}$.

The proof is completed.
\end{proof}

\begin{remark} \label{rem4-4}
Since
$$
\Hom_{\mathbbm{k}}(\mathfrak{h}'',
\mathfrak{c})\cong\Hom_{\mathbbm{k}}(\mathfrak{h}/\mathfrak{h}',
\mathfrak{c}),
$$
for any lifting $\overline{G}$ of $G$, there exist a family of maps
$\{\psi_{i}=\psi_{g_{i}} :
\mathfrak{h}/\mathfrak{h}'\rightarrow\mathfrak{c}\mid
g_{i}\in\Omega\}$ corresponding to it. Moreover, it is easy to see
that the condition $\varphi_{i}(\mathcal{H}\cap\mathfrak{h}'')=0$ is
equivalent to
$\psi_{i}((\mathcal{H}+\mathfrak{h}')/\mathfrak{h}')=0$.
\end{remark}

Now we can prove the main results of this section.

\begin{proposition} \label{prop4-5}
There is a monomorphism
$\mathfrak{g}'(\Gamma)\rightarrow(\mathfrak{g}')^{G}$, and for the
lifting $\overline{G}$ of $G$ corresponding to $\{\varphi_{i} :
\mathfrak{h}''\rightarrow\mathfrak{c}\mid 1\leq i\leq n\}$ with
$\varphi_{i}(\mathcal{H}\cap\mathfrak{h}'')=0$, we can extend this
monomorphism to the whole Lie algebra such that
$$
\mathfrak{g}(\Gamma)\rightarrow\mathfrak{g}^{\overline{G}}
$$
is also a monomorphism.
\end{proposition}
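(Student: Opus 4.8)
The plan is to build the map on Chevalley generators and reduce everything to checking the Chevalley--Serre relations. Explicitly, I would set
$$
\pi(\overline{E}_i)=E_i:=\sum_{\rho\in\irr G_i}E_{i\rho},\qquad
\pi(\overline{F}_i)=F_i:=\sum_{\rho\in\irr G_i}F_{i\rho},\qquad
\pi(\overline{H}_i)=H_i:=\sum_{\rho\in\irr G_i}H_{i\rho}
$$
for $i\in\mathcal{I}$; each image is a sum over a single $G$-orbit, hence fixed by $G$ (indeed by $\overline{G}$). On $\mathfrak{h}'(\Gamma)$ this is exactly the isomorphism $\phi$ already constructed, so $\pi|_{\mathfrak{h}'(\Gamma)}=\phi$ is injective. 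Since $C$ is symmetrisable, the Chevalley--Serre relations present $\mathfrak{g}'(\Gamma)$, so it suffices to verify that $E_i,F_i,H_i$ satisfy them in order to get a homomorphism $\mathfrak{g}'(\Gamma)\to(\mathfrak{g}')^{G}$.

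Most relations are immediate. Admissibility of the $G$-action on $\widehat{Q}$ forbids arrows among the vertices of one orbit, so $a_{(i\rho)(i\rho')}=2\delta_{\rho\rho'}$ and the triples $(E_{i\rho},H_{i\rho},F_{i\rho})$ span $|\irr G_i|$ mutually commuting copies of $\mathfrak{sl}_2$; their diagonal sums therefore form a single $\mathfrak{sl}_2$-triple, giving $[H_i,E_i]=2E_i$, $[H_i,F_i]=-2F_i$, $[E_i,F_i]=H_i$. Also $[H_i,H_j]=0$ as $\mathfrak{h}$ is abelian, and the duality identity $\sum_{\rho}a_{(i\rho)(j\sigma)}=c_{ij}$ (equivalently Lemma~\ref{lem3-11}(1)) yields $[H_i,E_j]=\sum_{\rho,\sigma}a_{(i\rho)(j\sigma)}E_{j\sigma}=c_{ij}E_j$ and $[H_i,F_j]=-c_{ij}F_j$, while $[E_i,F_j]=\sum_{\rho,\sigma}\delta_{(i\rho)(j\sigma)}H_{i\rho}=\delta_{ij}H_i$.

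The step I expect to be the main obstacle is the Serre relations $(\ad E_i)^{1-c_{ij}}E_j=0$ and $(\ad F_i)^{1-c_{ij}}F_j=0$ for $i\neq j$, and I would derive them from $\mathfrak{sl}_2$-theory for the diagonal triple $(E_i,H_i,F_i)$. Each $\ad E_{i\rho}$ is locally nilpotent and these operators commute, so the finite sum $\ad E_i$ (and likewise $\ad F_i$) is locally nilpotent; hence $\mathfrak{g}$ is an integrable module for this $\mathfrak{sl}_2$ and decomposes into finite-dimensional irreducibles. For $i\neq j$ one computes $[F_i,E_j]=0$ and $[H_i,E_j]=c_{ij}E_j$ with $c_{ij}\leq0$, so $E_j$ is a lowest-weight vector of weight $c_{ij}$; each nonzero irreducible component of $E_j$ is then the bottom of a copy of the $(1-c_{ij})$-dimensional irreducible and is killed by $(\ad E_i)^{1-c_{ij}}$, giving $(\ad E_i)^{1-c_{ij}}E_j=0$. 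The relation for $F_i$ follows symmetrically, $F_j$ now being a highest-weight vector of weight $-c_{ij}$. This produces the homomorphism into $(\mathfrak{g}')^{G}$, and since its kernel is an ideal meeting $\mathfrak{h}'(\Gamma)$ trivially (because $\phi$ is injective) and a symmetrisable Kac--Moody algebra has no nonzero ideal meeting its Cartan trivially, the kernel is $0$.

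Finally I would extend $\pi$ over $\mathfrak{h}(\Gamma)$. By Proposition~\ref{prop4-3}, under $\varphi_i(\mathcal{H}\cap\mathfrak{h}'')=0$ the triple $(\mathcal{H}^{\overline{G}},\{\epsilon_i\},\{H_i\})$ is a realization of $C$; uniqueness of realizations then supplies an isomorphism $\theta:\mathfrak{h}(\Gamma)\to\mathcal{H}^{\overline{G}}\subseteq\mathfrak{h}^{\overline{G}}$ with $\theta(\overline{H}_i)=H_i$ and $\epsilon_i\circ\theta=\overline{\varepsilon}_i$, extending $\phi$. Using $\epsilon_j=\tfrac{d_j}{|G|}\sum_\sigma\varepsilon_{j\sigma}$ together with $|\irr G_j|=|G_j|=|G|/d_j$, and the fact that $\varepsilon_{j\sigma}(\theta(\overline{H}))$ is independent of $\sigma$ on $\mathcal{H}$, one finds $\varepsilon_{j\sigma}(\theta(\overline{H}))=\overline{\varepsilon}_j(\overline{H})$ for every $\sigma$, so that $[\theta(\overline{H}),E_j]=\overline{\varepsilon}_j(\overline{H})E_j$ and $[\theta(\overline{H}),F_j]=-\overline{\varepsilon}_j(\overline{H})F_j$. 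With the relations of the previous paragraphs this shows $\pi$ extends to a homomorphism $\mathfrak{g}(\Gamma)\to\mathfrak{g}^{\overline{G}}$ whose image is $\overline{G}$-fixed; its kernel again meets $\mathfrak{h}(\Gamma)$ trivially since $\pi|_{\mathfrak{h}(\Gamma)}=\theta$ is injective, so by the same property of symmetrisable Kac--Moody algebras $\ker\pi=0$ and $\pi$ is the desired monomorphism.
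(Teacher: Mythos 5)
Your proposal is correct, and its skeleton coincides with the paper's: the same orbit-sum generators $E_{i}=\sum_{\rho}E_{i\rho}$, $F_{i}=\sum_{\rho}F_{i\rho}$, $H_{i}=\sum_{\rho}H_{i\rho}$, the same verification of $[H_{i},H_{j}]=0$, $[E_{i},F_{j}]=\delta_{ij}H_{i}$, $[H_{i},E_{j}]=c_{ij}E_{j}$ via the duality identity $\sum_{\rho}a_{(i\rho)(j\sigma)}=c_{ij}$, the same appeal to Proposition \ref{prop4-3} to realize $C$ on $\mathcal{H}^{\overline{G}}$ and extend over the full Cartan, and the same injectivity principle (an ideal meeting the Cartan trivially is zero, which is what the paper's citation of \cite[Proposition 1.7(b)]{Ka} encodes). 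The genuine difference is at the crux, the Serre relations $(\ad E_{i})^{1-c_{ij}}E_{j}=0$: the paper argues combinatorially, expanding $(\ad E_{i})^{1-c_{ij}}$ by the multinomial theorem (legitimate since the operators $\ad E_{i\rho}$, $\rho\in\irr G_{i}$, commute by admissibility) and observing by pigeonhole that every monomial $\prod_{\rho}(\ad E_{i\rho})^{\lambda_{\rho}}$ with $\sum_{\rho}\lambda_{\rho}=1-c_{ij}$ must contain a factor with $\lambda_{\rho}\geq 1-a_{(i\rho)(j\sigma)}$, which kills $E_{j\sigma}$ by the Serre relations already holding in $\mathfrak{g}$; you instead invoke $\mathfrak{sl}_2$-theory, noting that $\ad E_{i}$ and $\ad F_{i}$ are locally nilpotent (finite sums of commuting locally nilpotent operators), so that $\mathfrak{g}$ is an integrable module over the diagonal $\mathfrak{sl}_2$-triple and $E_{j}$, being a lowest-weight vector of weight $c_{ij}\leq 0$ annihilated by $\ad F_{i}$, is killed by $(\ad E_{i})^{1-c_{ij}}$. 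Both routes are valid; the paper's is more elementary and self-contained, while yours is shorter once the standard integrability lemma is granted, and it is the same mechanism the paper itself deploys later (Proposition \ref{prop4-6} and Claim \ref{claim4-7}). Two points where your write-up is in fact more careful than the paper's: you say explicitly that symmetrisability of $C$ (Gabber--Kac) is what converts ``the relations hold'' into a homomorphism out of $\mathfrak{g}'(\Gamma)$, and you verify the compatibility $[\theta(\overline{H}),E_{j}]=\overline{\varepsilon}_{j}(\overline{H})E_{j}$ needed to glue the map on $\mathfrak{g}'(\Gamma)$ with the realization isomorphism on $\mathfrak{h}(\Gamma)$, where the paper merely says the two homomorphisms are composed. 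One minor caution: your intermediate claim that the kernel of $\mathfrak{g}'(\Gamma)\rightarrow(\mathfrak{g}')^{G}$ vanishes quotes the no-small-ideals property for the derived algebra relative to $\mathfrak{h}'(\Gamma)$, which is a slightly more delicate statement than the standard one for $\mathfrak{g}(\Gamma)$ relative to $\mathfrak{h}(\Gamma)$; this is harmless here, since your final paragraph proves injectivity of the full map $\mathfrak{g}(\Gamma)\rightarrow\mathfrak{g}^{\overline{G}}$ from the standard statement, and injectivity on $\mathfrak{g}'(\Gamma)$ follows by restriction.
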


\begin{proof} We set
$$
H_{i}:=\sum_{\rho\in{\rm irr} G_{i}}H_{i\rho}, \quad
E_{i}:=\sum_{\rho\in{\rm irr} G_{i}}E_{i\rho}, \quad
F_{i}:=\sum_{\rho\in{\rm irr} G_{i}}F_{i\rho}
$$
for all $i\in\mathcal{I}$. Then $H_{i}, E_{i},
F_{i}\in(\mathfrak{g}')^{G}$ and
\begin{eqnarray*}
&&[H_{i}, H_{j}]=0, \\
&&[E_{i}, F_{j}]=\sum_{\rho\in{\rm irr} G_{i}\atop \sigma\in{\rm
irr} G_{j}}[E_{i\rho}, F_{j\sigma}]=\delta_{ij}\sum_{\rho\in{\rm
irr}
G_{i}}H_{i\rho}=\delta_{ij}H_{i}, \\
&&[H_{i}, E_{j}]=\sum_{\rho\in{\rm irr} G_{i}\atop\sigma\in{\rm irr}
G_{j}}[H_{i\rho}, E_{j\sigma}]=\sum_{\rho\in{\rm irr}
G_{i}\atop\sigma\in{\rm irr}
G_{j}}a_{(i\rho)(j\sigma)}E_{j\sigma}=c_{ij}\sum_{\sigma\in{\rm irr}
G_{j}}E_{j\sigma}=c_{ij}E_{j}.
\end{eqnarray*}
Similarly, we have $[H_{i}, F_{j}]=c_{ij}F_{j}$ for any $i,
j\in\mathcal{I}$. Note that $\ad E_{i\rho}$ and $\ad E_{i\rho'}$
commute for any $\rho, \rho'\in\irr G_{i}$, we have
$$
(\ad
E_{i})^{n}=\sum_{\lambda}\Phi_{\lambda}^{n}\prod_{\rho\in{\rm irr}
G_{i}}(\ad E_{i\rho})^{\lambda_{\rho}}
$$
for any positive integer
$n$, where $\lambda$ takes thought all the sequence
$\lambda=(\lambda_{\rho})_{\rho\in{\rm irr} G_{i}}$ satisfying
$$
\sum_{\rho\in{\rm irr} G_{i}}\lambda_{\rho}=n,
$$
and
$$
\Phi_{\lambda}^{n}=\left( {\begin{array}{*{20}c} n \\
\rho_{1} \\ \end{array}} \right) \left({\begin{array}{*{20}c}
n-\rho_{1} \\ \rho_{2} \\ \end{array}} \right)\cdots\left(
{\begin{array}{*{20}c} n-\rho_{1}
-\cdots-\rho_{|{\rm irr} G_{i}|-1} \\ \rho_{|{\rm irr} G_{i}|} \\
\end{array}} \right)
$$
for any $\lambda=(\rho_{1}, \rho_{2},\cdots, \rho_{|{\rm irr}
G_{i}|})$. In particular, if $n=1-c_{ij}$, then
$\lambda_{\rho}>1-a_{(i\rho)(j\sigma)}$ for some $\rho\in\irr G_{i}$
and so that
$$
(\ad E_{i\rho})^{\lambda_{\rho}}E_{j\sigma}=0, \qquad (\ad
E_{i})^{1-c_{ij}}E_{j}=0.
$$
Similarly, $(\ad
F_{i})^{1-c_{ij}}F_{j}=0$ for any $i, j\in\mathcal{I}$. Therefore,
there exists a non-zero homomorphism
$\mathfrak{g}'(\Gamma)\rightarrow(\mathfrak{g}')^{G}$.

Since $\big(\mathcal{H}^{\overline{G}},
\{\frac{d_{i}}{|G|}\phi(\varepsilon_{i})\}, \{\phi(H_{i})\}\big)$ is
a realization of $C$ by Proposition \ref{prop4-3}, there is an
isomorphism
$\mathfrak{h}(\Gamma)\rightarrow\mathcal{H}^{\overline{G}}$,
$\overline{H}_{i}\rightarrow H_{i}$. Therefore we can get a
homomorphism
$\mathfrak{g}(\Gamma)\rightarrow\mathfrak{g}^{\overline{G}}$ by
compositing the homomorphisms
$\mathfrak{g}'(\Gamma)\rightarrow(\mathfrak{g}')^{G}$ and
$\mathfrak{h}(\Gamma)\rightarrow\mathcal{H}^{\overline{G}}$. By
\cite[Proposition 1.7(b)]{Ka},
$\mathfrak{g}(\Gamma)\rightarrow\mathfrak{g}^{\overline{G}}$ and
$\mathfrak{g}'(\Gamma)\rightarrow(\mathfrak{g}')^{G}$ are
monomorphisms.
\end{proof}

Now, we can identify $\mathfrak{g}(\Gamma)$ with a subalgebra of
$\mathfrak{g}^{\overline{G}}$. Following from Section
\ref{sect-3}.1, the map
$$
h: \quad \mathbb{Z}\widehat{I}\rightarrow\mathbb{Z}\mathcal{I},
\qquad \beta\mapsto h(\beta), \quad h(\beta)_{i}=\sum_{\rho\in{\rm
irr} G_{i}}\beta_{i\rho},
$$
satisfies
$$
d_{i}\bigg(\beta,
\sum_{\rho\in{\rm irr}
G_{i}}\varepsilon_{i\rho}\bigg)_{\widehat{Q}}=(h(\beta),
\overline{\varepsilon}_{i})_{\Gamma}
$$
for all $\beta=\sum_{(i,
\rho)\in\widehat{I}}\beta_{i\rho}\varepsilon_{i\rho}\in\mathbb{Z}\widehat{I}$
and $h(\Delta_{\widehat{Q}})=\Delta_{\Gamma}$ by Proposition
\ref{prop3-12}.

\begin{proposition} \label{prop4-6}
The monomorphism
$\mathfrak{g}(\Gamma)\rightarrow\mathfrak{g}^{\overline{G}}$ endows
$\mathfrak{g}^{\overline{G}}$ with an integrable
$\mathfrak{g}(\Gamma)$-module structure under the adjoint action of
$\mathfrak{g}(\Gamma)$.
\end{proposition}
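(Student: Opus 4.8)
The plan is to verify directly the two defining conditions of an integrable module in the sense of \cite[\S3.6]{Ka}: that $\mathfrak{g}^{\overline{G}}$ is $\mathfrak{h}(\Gamma)$-diagonalizable, and that the images $\overline{E}_{i}, \overline{F}_{i}$ of the Chevalley generators of $\mathfrak{g}(\Gamma)$ act locally nilpotently. First I would record that the action is well-defined: by Proposition \ref{prop4-5} we view $\mathfrak{g}(\Gamma)$ as a subalgebra of $\mathfrak{g}^{\overline{G}}$, and since $\mathfrak{g}^{\overline{G}}$ is itself a subalgebra of $\mathfrak{g}$ we have $[\mathfrak{g}(\Gamma), \mathfrak{g}^{\overline{G}}]\subseteq[\mathfrak{g}^{\overline{G}},\mathfrak{g}^{\overline{G}}]\subseteq\mathfrak{g}^{\overline{G}}$, so the adjoint action of $\mathfrak{g}(\Gamma)$ preserves $\mathfrak{g}^{\overline{G}}$ and makes it a $\mathfrak{g}(\Gamma)$-module.

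For the weight-space condition I would argue as follows. The ambient algebra decomposes as $\mathfrak{g}=\bigoplus_{\alpha}\mathfrak{g}_{\alpha}$, diagonalizable under $\mathfrak{h}$ and hence a fortiori under the subspace $\mathcal{H}^{\overline{G}}$, which under the embedding is identified with $\mathfrak{h}(\Gamma)$ via $\overline{H}_{i}\mapsto H_{i}$. Grouping the root spaces by the restriction of their weights to $\mathcal{H}^{\overline{G}}$ gives an $\mathcal{H}^{\overline{G}}$-weight decomposition of $\mathfrak{g}$. Because $\mathcal{H}^{\overline{G}}\subseteq\mathfrak{g}^{\overline{G}}$ and $\mathfrak{g}^{\overline{G}}$ is a subalgebra, $\mathfrak{g}^{\overline{G}}$ is an $\mathcal{H}^{\overline{G}}$-submodule, and a submodule of a diagonalizable module is diagonalizable; thus $\mathfrak{g}^{\overline{G}}$ is a weight module for $\mathfrak{h}(\Gamma)$.

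The heart of the matter is local nilpotence, and this is where I expect the main work. Since $\ad\overline{E}_{i}$ and $\ad\overline{F}_{i}$ preserve $\mathfrak{g}^{\overline{G}}$, it suffices to prove they are locally nilpotent on all of $\mathfrak{g}$. Writing $\overline{E}_{i}=\sum_{\rho\in\irr G_{i}}E_{i\rho}$, each $E_{i\rho}$ is a Chevalley generator of $\mathfrak{g}=\mathfrak{g}(\widehat{Q})$, so $\ad E_{i\rho}$ is locally nilpotent by the standard fact for Kac-Moody algebras \cite{Ka}. The key observation is that the vertices $(i,\rho)$ with $\rho\in\irr G_{i}$ form a single $G$-orbit, so by admissibility no arrow joins them; hence $a_{(i\rho)(i\rho')}=0$ for $\rho\neq\rho'$, and the Serre relation forces $[E_{i\rho},E_{i\rho'}]=0$, so the operators $\ad E_{i\rho}$ ($\rho\in\irr G_{i}$) pairwise commute — the same commutativity already exploited in Proposition \ref{prop4-5}. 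It then remains to prove the elementary lemma that a finite sum of pairwise commuting locally nilpotent endomorphisms is locally nilpotent: for $x\in\mathfrak{g}$, choose $N_{\rho}$ with $(\ad E_{i\rho})^{N_{\rho}}x=0$; by commutativity any monomial $\prod_{\rho}(\ad E_{i\rho})^{m_{\rho}}x$ with some $m_{\rho}\geq N_{\rho}$ vanishes, so the span $W$ of all such monomials is finite-dimensional and $\ad\overline{E}_{i}$-invariant, and on $W$ the operator $\ad\overline{E}_{i}$ is a sum of commuting nilpotent operators, hence nilpotent. This gives $(\ad\overline{E}_{i})^{L}x=0$, and symmetrically for $\ad\overline{F}_{i}$.

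Combining the weight decomposition with the local nilpotence of $\ad\overline{E}_{i}$ and $\ad\overline{F}_{i}$ yields integrability by definition. The only delicate point is the reduction to commuting locally nilpotent operators, and the commutativity $[E_{i\rho},E_{i\rho'}]=0$ supplied by admissibility is exactly what removes that difficulty; everything else is a routine check.
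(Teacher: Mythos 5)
Your proof is correct, but it takes a genuinely different route from the paper's in both main steps. For $\mathfrak{h}(\Gamma)$-diagonalizability, the paper does not use the abstract fact that an invariant subspace of a diagonalizable module is diagonalizable; instead it exhibits explicit weight vectors: for $\beta\in\Delta_{\widehat{Q}}$ and $x\in\mathfrak{g}_{\beta}$ fixed by $H_{\beta}$, it forms $\Sigma(x)=\sum_{g\in G_{\beta}}g(x)$, computes $[H,\Sigma(x)]=h(\beta)(H)\Sigma(x)$ for $H\in\mathcal{H}^{\overline{G}}$, and notes that such elements span $\mathfrak{g}^{\overline{G}}$. For local nilpotence the divergence is sharper: the paper first identifies the nonzero weights of $\mathfrak{g}^{\overline{G}}$ with roots of $\Gamma$ (this uses $h(\Delta_{\widehat{Q}})=\Delta_{\Gamma}$ from Proposition \ref{prop3-12}) and then invokes finiteness of the root strings $\{\beta+k\overline{\varepsilon}_{i}\}\cap\Delta_{\Gamma}$, whereas you work on all of $\mathfrak{g}$, using local nilpotence of each $\ad E_{i\rho}$, the commutativity $[E_{i\rho},E_{i\rho'}]=0$ forced by admissibility, and the lemma that a finite sum of pairwise commuting locally nilpotent operators is locally nilpotent. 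Your route is more elementary and self-contained: it bypasses Proposition \ref{prop3-12} entirely, and the multinomial expansion it needs, $(\ad E_{i})^{n}=\sum_{\lambda}\Phi_{\lambda}^{n}\prod_{\rho}(\ad E_{i\rho})^{\lambda_{\rho}}$, is already established in the paper's proof of Proposition \ref{prop4-5}, so you could cite it rather than run the finite-dimensional invariant-subspace argument. What the paper's heavier approach buys is reuse: the explicit description of $(\mathfrak{g}^{\overline{G}})_{h(\beta)}$ as spanned by the elements $\Sigma(x)$, and the identification of weights with $\Delta_{\Gamma}$, are exactly the inputs to Claim \ref{claim4-7} (that real root spaces of $\mathfrak{g}^{\overline{G}}$ are one-dimensional) and hence to Corollary \ref{cor4-8}; your abstract diagonalizability argument, while sufficient for integrability, does not supply that information.
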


\begin{proof}
Firstly, we identity the realization $\big(\mathfrak{h}(\Gamma),
\{\overline{\varepsilon}_{i}\}, \{\overline{H}_{i}\}\big)$ with
$\big(\mathcal{H}^{\overline{G}}, \{\epsilon_{i}\}, \{H_{i}\}\big)$.
For any non-zero $\beta=\sum_{(i,
\rho)\in\widehat{I}}\beta_{i\rho}\varepsilon_{i\rho}\in\Delta_{\widehat{Q}}$
and $H\in\mathcal{H}^{\overline{G}}$, we have
$$
\varepsilon_{i\rho}(H)=\frac{d_{i}}{|G|}\sum_{\rho\in{\rm irr}
G_{i}}\varepsilon_{i\rho}(H)=\overline{\varepsilon}_{i}(H)
$$
and
$$
\beta(H)=\sum_{(i,
\rho)\in\widehat{I}}\beta_{i\rho}\varepsilon_{i\rho}(H)
=\sum_{i\in\mathcal{I}}\Big(\sum_{\rho\in{\rm irr}
G_{i}}\beta_{i\rho}\Big)\overline{\varepsilon}_{i}(H)
=\sum_{i\in\mathcal{I}}h(\beta)_{i}\overline{\varepsilon}_{i}(H)
=h(\beta)(H).
$$
Denote by $H_{\beta}=\{g\in G \mid g(\beta)=\beta\}$
and $G_{\beta}$ a complete set of left coset representatives of
$H_{\beta}$ in $G$. Then $H_{\beta}$ acts on the root space
$\mathfrak{g}_{\beta}$. Suppose that $x\in\mathfrak{g}_{\beta}$
satisfies $g(x)=x$ for any $g\in H_{\beta}$. Let
$$
\Sigma(x):=\sum_{g\in G_{\beta}}g(x).
$$
It is easy to see that $\Sigma(x)\in\mathfrak{g}^{\overline{G}}$ and
$$
[H, \Sigma(x)]=\sum_{g\in G_{\beta}}[H, g(x)]
=\sum_{g\in G_{\beta}}g(\beta)(H)g(x)=h(\beta)(H)\sum_{g\in
G_{\beta}}g(x)=h(\beta)(H)\Sigma(x)
$$
for all $H\in\mathcal{H}^{\overline{G}}$, since
$h(g(\beta))=h(\beta)$ for any $g\in G$. It follows that $\Sigma(x)$
lies in the weight space $(\mathfrak{g}^{\overline{G}})_{h(\beta)}$.
Note that each element in $\mathfrak{g}^{\overline{G}}$ can be
written as a sum of some $\Sigma(x)$ with
$x\in\mathfrak{g}_{\beta}$, $\beta\in\Delta_{\widehat{Q}}$, we
obtain that $\mathfrak{g}^{\overline{G}}$ is
$\mathfrak{h}(\Gamma)$-diagonalisable.

Secondly, it is easy to see that the non-zero weights of
$\mathfrak{g}^{\overline{G}}$ must be roots of $\Gamma$ since
$h(\Delta_{\widehat{Q}})=\Delta_{\Gamma}$. On the other hand,  every
root of $\Gamma$ is also a weight of $\mathfrak{g}^{\overline{G}}$
under the adjoint action by the monomorphism
$\mathfrak{g}(\Gamma)\rightarrow\mathfrak{g}^{\overline{G}}$.

Finally, for any $\beta\in\Delta_{\Gamma}$, the set
$\{\beta+k\overline{\varepsilon}_{i}\mid
k\in\mathbb{Z}\}\cap\Delta_{\Gamma}$ is finite. Thus the action of
$\overline{E}_{i}$ and $\overline{F}_{i}$ are local nilpotent on
$\mathfrak{g}^{\overline{G}}$. The proof is completed.
\end{proof}

Following from the proof of Proposition \ref{prop4-6},
$(\mathfrak{g}^{\overline{G}})_{\alpha}$ is spanned by the elements
$\Sigma(x)=\sum_{g\in G_{\beta}}g(x)$, where
$x\in\mathfrak{g}_{\beta}$ satisfies $g(x)=x$ for any $g\in
H_{\beta}$,  and $\beta\in\Delta_{\widehat{Q}}$ satisfies
$h(\beta)=\alpha$. Thus, by the action of $G$ on $\{E_{i\rho}\}$ and
$\{F_{i\rho}\}$,  $\ad H_{\beta}$ acts on $\mathfrak{g}_{\beta}$ is
identity and so that
$$
\dim_{\mathbbm{k}}(\mathfrak{g}^{\overline{G}})_{h(\beta)}=1
$$
for any simple root $\beta$. That is,
$\dim_{\mathbbm{k}}(\mathfrak{g}^{\overline{G}})_{\alpha}=1$ for all
simple root $\alpha\in\Delta_{\Gamma}$. Moreover, we  have the
following claim.

\begin{cliam} \label{claim4-7} ${\rm
dim}_{\mathbbm{k}}(\mathfrak{g}^{\overline{G}})_{\alpha}=1$ for any
real root $\alpha\in\Delta_{\Gamma}$.\end{cliam}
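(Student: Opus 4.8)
The plan is to show that for any positive real root $\alpha\in\Delta_\Gamma$, the weight space $(\mathfrak{g}^{\overline{G}})_\alpha$ is one-dimensional, reducing to the already-established case of simple roots by a Weyl-group transport argument. First I would invoke Proposition \ref{prop3-12}: since $\alpha$ is a positive real root of $\Gamma$, there is a unique $G$-orbit of roots $\beta\in\Delta_{\widehat{Q}}$ with $h(\beta)=\alpha$, and every such $\beta$ is itself a real root. Because $\widehat{Q}$ is symmetric and $\beta$ is real, $\dim_{\mathbbm{k}}\mathfrak{g}_\beta=1$, so each $\beta$ contributes a one-dimensional root space upstairs in $\mathfrak{g}=\mathfrak{g}(\widehat{Q})$.

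The key step is to describe $(\mathfrak{g}^{\overline{G}})_\alpha$ explicitly in terms of these upstairs root spaces. Following the paragraph after Proposition \ref{prop4-6}, the weight space $(\mathfrak{g}^{\overline{G}})_\alpha$ is spanned by the averaged elements $\Sigma(x)=\sum_{g\in G_\beta}g(x)$, where $\beta$ ranges over $\{\beta\in\Delta_{\widehat{Q}}\mid h(\beta)=\alpha\}$ and $x\in\mathfrak{g}_\beta$ is fixed by $H_\beta$. Since all such $\beta$ lie in a single $G$-orbit, they all arise as $g(\beta_0)$ for a fixed representative $\beta_0$, and the corresponding averages $\Sigma(x)$ coincide up to scalar. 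Concretely, I would fix one $\beta_0$ with $h(\beta_0)=\alpha$; then $\dim_{\mathbbm{k}}\mathfrak{g}_{\beta_0}=1$, so there is essentially one choice of $x$ (up to scalar), and one checks that $x$ is automatically fixed by $H_{\beta_0}$ because $H_{\beta_0}$ permutes a one-dimensional space trivially (real root spaces are acted on by $\ad$ as the identity up to the induced sign, and one must verify no nontrivial character appears). This forces $(\mathfrak{g}^{\overline{G}})_\alpha$ to be exactly one-dimensional, spanned by $\Sigma(x)$.

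Alternatively, and perhaps more cleanly, I would transport from the simple-root case via the Weyl group. The preceding remark already establishes $\dim_{\mathbbm{k}}(\mathfrak{g}^{\overline{G}})_\gamma=1$ for every simple root $\gamma\in\Delta_\Gamma$. Given a real root $\alpha$, choose $\omega'\in\mathcal{W}(\Gamma)$ with $\omega'(\alpha)=\overline{\varepsilon}_i$ a simple root; lift $\omega'$ to $\omega\in C_G(\mathcal{W}(\widehat{Q}))$ via Lemma \ref{lem3-11}(3). Since $\omega$ commutes with the $G$-action and is realized by an automorphism of $\mathfrak{g}$ preserving $\mathfrak{g}^{\overline{G}}$, it carries $(\mathfrak{g}^{\overline{G}})_\alpha$ isomorphically onto $(\mathfrak{g}^{\overline{G}})_{\overline{\varepsilon}_i}$, which has dimension $1$. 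Hence $\dim_{\mathbbm{k}}(\mathfrak{g}^{\overline{G}})_\alpha=1$.

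The main obstacle I anticipate is justifying that the lifted Weyl-group element genuinely acts as a vector-space automorphism of $\mathfrak{g}^{\overline{G}}$ preserving the weight-space grading indexed by $\Delta_\Gamma$, i.e. that the reflections $\widehat{S}_i=\prod_{\rho}r_{i\rho}$ can be realized not merely combinatorially on the root lattice but by genuine Lie-algebra automorphisms (exponentials of $\ad E_{i\rho}$, $\ad F_{i\rho}$) that commute with $\overline{G}$ and send the weight space for $\alpha$ to that for $\gamma_i(\alpha)$. Real roots give locally nilpotent adjoint actions (as noted at the end of the proof of Proposition \ref{prop4-6}), so these exponentials are well-defined automorphisms of $\mathfrak{g}$; the care needed is to confirm they descend to $\mathfrak{g}^{\overline{G}}$, which follows once one checks that the product $\widehat{S}_i$ over a single $G$-orbit is $\overline{G}$-equivariant, exactly paralleling the commutation $g\circ r_i=r_{g(i)}\circ g$ used in Section \ref{sect-3}.1.
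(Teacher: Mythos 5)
Your second, Weyl-transport route is essentially the paper's own proof: the paper realizes $\widehat{S}_i$ by the Lie-algebra automorphisms $\overline{S}_{i}=\prod_{\rho\in{\rm irr}G_{i}}\overline{r}_{i\rho}$ with $\overline{r}_{i\rho}=\exp(\ad F_{i\rho})\exp(-\ad E_{i\rho})\exp(\ad F_{i\rho})$, verifies exactly the $\overline{G}$-equivariance and the preservation of $\mathcal{H}^{\overline{G}}$ that you flag as the main obstacle, and then transports the real root to a simple root. Be aware, though, that your first route is not an independent argument: the claim that $x\in\mathfrak{g}_{\beta_{0}}$ is ``automatically'' fixed by $H_{\beta_{0}}$ is precisely the nontrivial point (a priori $H_{\beta_{0}}$ could act on the one-dimensional space $\mathfrak{g}_{\beta_{0}}$ by a nontrivial character, which would make $(\mathfrak{g}^{\overline{G}})_{\alpha}$ zero rather than one-dimensional), and the paper settles it by the same transport, using $H_{\omega(\beta)}=H_{\beta}$ and $\overline{\omega}(\mathfrak{g}_{\beta})=\mathfrak{g}_{\omega(\beta)}$ to reduce to the simple-root case where $G$ visibly permutes the generators without scalars.
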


\begin{proof} We consider the automorphism
$$
\overline{r}_{i\rho}:=\exp(\ad
F_{i\rho})\exp(-\ad E_{i\rho})\exp(\ad F_{i\rho})
$$
of $\mathfrak{g}$. Then
$\overline{r}_{i\rho}(\mathfrak{g}_{\beta})=\mathfrak{g}_{r_{i\rho}(\beta)}$
and $\overline{r}_{i\rho}(H)=H-\varepsilon_{i\rho}(H)H_{i\rho}$ for
any $H\in\mathfrak{h}$ (see \cite[Lemma 3.8]{Ka}). Note that
$\overline{r}_{i\rho}$ and $\overline{r}_{i\rho'}$ commute for any
$\rho, \rho'\in\irr G_{i}$, we let
$$
\overline{S}_{i}:=\prod_{\rho\in{\rm
irr}G_{i}}\overline{r}_{i\rho}.
$$
Then, for any $H\in\mathcal{H}^{\overline{G}}$, we have
$$
\overline{S}_{i}(H)=H-\sum_{\rho\in{\rm
irr}G_{i}}\varepsilon_{i\rho}(H)H_{i\rho}=H
-\epsilon_{i}(H)\sum_{\rho\in{\rm irr}G_{i}}H_{i\rho}=H
-\epsilon_{i}(H)H_{i},
$$
and
$\overline{S}_{i}(H)\in\mathcal{H}^{\overline{G}}$. Note that
$\overline{S}_{i}$ and $G$ commute on $\mathfrak{g}'$, it deduces
that $\overline{S}_{i}$ can define an automorphism of
$\mathfrak{g}^{\overline{G}}$ such that
$$
\overline{S}_{i}\big((\mathfrak{g}^{\overline{G}})_{\alpha}\big)
=(\mathfrak{g}^{\overline{G}})_{\widehat{S}_{i}(\alpha)}.
$$
Therefore, $\overline{S}_{i}$ is an extension of the automorphism
$\exp(\ad\overline{F}_{i})\exp(-\ad\overline{E}_{i})
\exp(\ad\overline{F}_{i})$ of $\mathfrak{g}(\Gamma)$.

Let $\alpha\in\Delta_{\Gamma}$ be a real root. By Lemma
\ref{lem3-11} and Proposition \ref{prop3-12}, there exist a real
root $\beta\in\Delta_{\widehat{Q}}$ and $\omega\in
C_{G}(\mathcal{W}(\widehat{Q}))$ such that $h(\beta)=\alpha$,
$\omega(\beta)$ is a simple root and $H_{\omega(\beta)}=H_{\beta}$.
Let
$\omega=\widehat{S}_{i_{1}}\widehat{S}_{i_{2}}\cdots\widehat{S}_{i_{r}}$
and $\overline{\omega}=\overline{S}_{i_{1}}\overline{S}_{i_{2}}
\cdots\overline{S}_{i_{r}}$, then
$\overline{\omega}(\mathfrak{g}_{\beta})=\mathfrak{g}_{\omega(\beta)}$
and hence $\mathfrak{g}_{\beta}$ is fixed by $H_{\beta}$. Finally,
note that all these $\beta$ are in the same $G$-orbit, we have
$\dim_{\mathbbm{k}}(\mathfrak{g}^{\overline{G}})_{\alpha}=1$.
\end{proof}

In particular, if $Q$ is a finite union of Dynkin quivers, then
$\mathfrak{g}$ is a direct sum of simple Lie algebras and all roots
of $\Gamma$ are real. By the claim, we have

\begin{corollary} \label{cor4-8}
If $Q$ is a finite union of Dynkin quivers and $G\subseteq{\rm
Aut}(\mathbbm{k}Q)$ is finite abelian, then there is a Lie algebra
isomorphism $\mathfrak{g}(\Gamma)\cong\mathfrak{g}^{\overline{G}}$.
\end{corollary}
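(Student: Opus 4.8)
The plan is to show that the monomorphism $\iota:\mathfrak{g}(\Gamma)\to\mathfrak{g}^{\overline{G}}$ constructed in Proposition \ref{prop4-5} is in fact surjective, so that it upgrades to a Lie algebra isomorphism. First I would record the two structural consequences of the hypothesis that $Q$ is a finite union of Dynkin quivers: the Lie algebra $\mathfrak{g}=\mathfrak{g}(\widehat{Q})$ is a direct sum of finite-dimensional simple Lie algebras, and \emph{every} root of $\Gamma$ is real. Since $\iota$ restricts on Cartan subalgebras to the isomorphism $\mathfrak{h}(\Gamma)\xrightarrow{\sim}\mathcal{H}^{\overline{G}}$ coming from the realization in Proposition \ref{prop4-3}, it is a morphism of weight-graded spaces once we transport the $\mathcal{H}^{\overline{G}}$-weight grading of $\mathfrak{g}^{\overline{G}}$ established in the proof of Proposition \ref{prop4-6} to an $\mathfrak{h}(\Gamma)$-weight grading. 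Concretely, for $x\in\mathfrak{g}(\Gamma)_{\alpha}$ one has $[\iota(\overline{H}),\iota(x)]=\alpha(\overline{H})\iota(x)$, so $\iota\big(\mathfrak{g}(\Gamma)_{\alpha}\big)\subseteq(\mathfrak{g}^{\overline{G}})_{\alpha}$. Consequently $\iota$ preserves weight spaces, and it suffices to prove that $\iota$ is onto each weight space $(\mathfrak{g}^{\overline{G}})_{\alpha}$.

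Next I would dispose of the weight spaces one at a time. For the zero weight, the proof of Proposition \ref{prop4-6} shows that the nonzero weights of $\mathfrak{g}^{\overline{G}}$ are exactly the roots of $\Gamma$; since every weight $h(\beta)$ arising from a nonzero $\beta\in\Delta_{\widehat{Q}}$ is sign-definite, no nonzero root of $\widehat{Q}$ can satisfy $h(\beta)=0$, so the zero weight space of $\mathfrak{g}^{\overline{G}}$ is precisely its Cartan $\mathcal{H}^{\overline{G}}=\mathfrak{h}^{\overline{G}}$, onto which $\iota$ maps isomorphically. For a nonzero weight $\alpha$, the hypothesis guarantees that $\alpha$ is a real root of $\Gamma$, so Claim \ref{claim4-7} gives $\dim_{\mathbbm{k}}(\mathfrak{g}^{\overline{G}})_{\alpha}=1$. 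On the other hand $\mathfrak{g}(\Gamma)_{\alpha}$ is one-dimensional, as $\mathfrak{g}(\Gamma)$ is finite-dimensional semisimple, and $\iota$ maps it injectively into $(\mathfrak{g}^{\overline{G}})_{\alpha}$; a nonzero vector thus lands in a one-dimensional space, forcing $\iota\big(\mathfrak{g}(\Gamma)_{\alpha}\big)=(\mathfrak{g}^{\overline{G}})_{\alpha}$. Hence $\iota$ is surjective on every weight space, and therefore surjective.

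Equivalently, one may simply compare dimensions: with corank $r=0$ in the finite-type case one has $\dim_{\mathbbm{k}}\mathfrak{h}(\Gamma)=|\mathcal{I}|$, so $\dim_{\mathbbm{k}}\mathfrak{g}(\Gamma)=|\mathcal{I}|+|\Delta_{\Gamma}|=\dim_{\mathbbm{k}}\mathfrak{g}^{\overline{G}}$ by Claim \ref{claim4-7} and the identification of the zero weight space above, whence the injection $\iota$ between spaces of equal finite dimension is an isomorphism. I expect the only genuinely delicate point to be the bookkeeping around the zero weight space, namely verifying that $\iota(\mathfrak{h}(\Gamma))$ exhausts $(\mathfrak{g}^{\overline{G}})_{0}$ and that no nonzero root of $\widehat{Q}$ degenerates to weight $0$ under $h$. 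Everything else is a direct assembly of Propositions \ref{prop4-5} and \ref{prop4-6} together with Claim \ref{claim4-7}, once the standing reduction to real roots supplied by the Dynkin hypothesis is in hand.
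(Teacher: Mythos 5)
Your proposal is correct and follows essentially the same route as the paper: the paper's own (very terse) proof is precisely the observation that in the Dynkin case all roots of $\Gamma$ are real, so Claim \ref{claim4-7}, combined with the monomorphism of Proposition \ref{prop4-5} and the weight decomposition established in Proposition \ref{prop4-6}, forces surjectivity. You have simply made explicit the bookkeeping the paper leaves implicit, namely that the zero weight space of $\mathfrak{g}^{\overline{G}}$ is exactly $\mathcal{H}^{\overline{G}}$ and that the one-dimensional real root spaces (equivalently, the dimension count $|\mathcal{I}|+|\Delta_{\Gamma}|$ on both sides) give the isomorphism.
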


%%%%%%%%%%%%%%%%%%%%%%%%%%%%%%%%%%%%%%%%%%%%%%%%%%%%%%%%%%%%%%%%%%%%%%%%%%%

\section{Examples} \label{sect-5}
In this section, we give two examples to elucidate our results.

\begin{example} \label{exa5-1}
Let $Q=(I, E)$ be the quiver
$$~~~~~~~~~~~~~~~~~~~~~~~\setlength{\unitlength}{1mm}
\begin{picture}(50,18)
\put(22,8){$\bullet$}             \put(35,1){$\bullet$}
\put(35,15){$\bullet$}            \put(5,8){$\bullet$}
\put(37,15){\small $3$}         \put(37,0){\small $4$}
\put(22,6){\small $1$}           \put(2,8){\small $2$}
\put(24,8){\vector (2,-1){10}}   \put(24,10){\vector (2,1){10}}
\put(21,9){\vector (-1,0){13}}   \put(14,9){$\alpha$}
\put(27,12){$\beta$}             \put(27,4){$\gamma$}
\end{picture}
$$
The action of $G=\langle g\rangle\cong\mathbb{Z}/6\mathbb{Z}$ on
$\mathbbm{k}Q$ given by
\begin{center}
\begin{tabular*}{6cm}{@{\extracolsep{\fill}}l|lllllllllr}
& $e_{1}$ & $e_{2}$ & $e_{3}$ & $e_{4}$
 & $\alpha$ & $\beta$ & $\gamma$ \\
\hline $g$ & $e_{1}$ & $e_{3}$ & $e_{4}$ & $e_{2}$ & $-\beta$ &
$-\gamma$ & $-\alpha$\\
\end{tabular*}
\end{center}
where $e_{i}$ is the idempotent element of $\mathbbm{k}Q$
corresponding to vertex $i$, $i\in\{1, 2, 3, 4\}$. Then the Cartan
matrix of $Q$ is
$$
A=(a_{ij})={\small\left(
\begin{array}{cccc}2&-1&-1&-1\\
-1&2&0&0\\ -1&0&2&0\\  -1&0&0&2\\
\end{array}\right)}.
$$
Let $\varepsilon_{1}, \varepsilon_{2},
\varepsilon_{3}, \varepsilon_{4}$ be all the simple roots of the
symmetric Kac-Moody algebra $\mathfrak{g}(Q)$.  We endow the root
lattice $\mathbb{Z}I$ with a symmetric bilinear form $(-,-)_{Q}$ via
$(\varepsilon_{i}, \varepsilon_{j})_{Q}=a_{ij}$ and define
reflection $r_{i} :
\alpha\mapsto\alpha-(\alpha,\varepsilon_{i})_{Q}\varepsilon_{i}$ for
each vertex $i\in I$. Then, it is well-knowen that Weyl group
$\mathcal{W}(Q)\cong(\mathbb{Z}/2\mathbb{Z})^{3}\rtimes S_{4}$, and
one can check that $\Delta_{Q}=\pm\{\varepsilon_{1},
\varepsilon_{2}, \varepsilon_{3}, \varepsilon_{4},
\varepsilon_{1}+\varepsilon_{2}, \varepsilon_{1}+\varepsilon_{3},
\varepsilon_{1}+\varepsilon_{4},
\varepsilon_{1}+\varepsilon_{2}+\varepsilon_{3},
\varepsilon_{1}+\varepsilon_{2}+\varepsilon_{4},
\varepsilon_{1}+\varepsilon_{3}+\varepsilon_{4},
\varepsilon_{1}+\varepsilon_{2}+\varepsilon_{3}+\varepsilon_{4},
2\varepsilon_{1}+\varepsilon_{2}+\varepsilon_{3}+\varepsilon_{4}\}$
is the root system of $\mathfrak{g}(Q)$.

\medskip
We get the generalized McKay quiver $\widehat{Q}=(\widehat{I},
\widehat{E})$ of $(Q, G)$ as follows.
$$~~~~~~~~~~~~~~~~~~~~~~~\setlength{\unitlength}{1mm}
\begin{picture}(50,18)
\put(-3,8){$\bullet$}                    \put(10,1){$\bullet$}
\put(10,15){$\bullet$}                   \put(-20,8){$\bullet$}
\put(12,15){\scriptsize $(1,\rho_{3})$}
\put(12,0){\scriptsize$(1,\rho_{5})$}
\put(-8,6){\scriptsize$(2,\sigma_{0})$}
\put(-28,8){\scriptsize$(1,\rho_{1})$} \put(9,3){\vector(-2,1){10}}
\put(9,15){\vector (-2,-1){10}}  \put(-17,9){\vector (1,0){13}}
\put(-11,10){\small$\alpha_{1}$}      \put(2,13){\small$\alpha_{3}$}
\put(2,3){\small$\alpha_{5}$}           \put(52,8){$\bullet$}
\put(65,1){$\bullet$}                    \put(65,15){$\bullet$}
\put(35,8){$\bullet$}        \put(67,15){\scriptsize $(1,\rho_{2})$}
\put(67,0){\scriptsize$(1,\rho_{4})$} \put(47,6){\scriptsize
$(2,\sigma_{1})$}             \put(27,8){\scriptsize $(1,\rho_{0})$}
\put(64,3){\vector (-2,1){10}}    \put(64,15){\vector (-2,-1){10}}
\put(38,9){\vector (1,0){13}}     \put(42,10){\small$\alpha_{0}$}
\put(57,13){\small$\alpha_{2}$}    \put(57,3){\small$\alpha_{4}$}
\end{picture}
$$
where $\rho_{i}$ is the irreducible representation
of $G=\langle g\rangle\cong\mathbb{Z}/6\mathbb{Z}$ defined by
$$
a\cdot g=\xi^{i}a, \qquad a\in\rho_{i},
$$
$\sigma_{j}$ is the irreducible representation of $\langle
g^{3}\rangle\cong\mathbb{Z}/2\mathbb{Z}$ defined by
$$
b\cdot g^{3}=\xi^{3j}b, \qquad b\in\sigma_{j},
$$
and $\xi$ is a primitive $6$-th root of unity. As we have discussed
in Section \ref{sect-3}.2, by the group isomorphism
$$
\varphi: G\rightarrow\widetilde{G}, \qquad
\varphi(g^{i})=\chi_{g^{i}}, \qquad \chi_{g^{i}}(g^{j})=\xi^{ij},
$$
we define the action of $G$ on $\mathbbm{k}Q\ast G$ by setting
$$
g^{i}(\lambda g^{j})=\xi^{ij}\lambda g^{j}
$$
for any $g^{i}\in G$, $\lambda g^{j}\in\mathbbm{k}Q\ast G$. This
induces an action of $G=\langle g\rangle\cong\mathbb{Z}/6\mathbb{Z}$
on $\mathbbm{k}\widehat{Q}$ given by
\begin{center}
\begin{tabular*}
{12cm}{@{\extracolsep{\fill}}l|llllllllllllllllr} & $e_{0}$ &
$e_{1}$ & $e_{2}$ & $e_{3}$ & $e_{4}$ & $e_{5}$ & $e'_{0}$ &
$e'_{1}$
 & $\alpha_{0}$ & $\alpha_{1}$ & $\alpha_{2}$
 & $\alpha_{3}$ & $\alpha_{4}$ & $\alpha_{5}$  \\
\hline $g$ & $e_{1}$ & $e_{2}$ & $e_{3}$ & $e_{4}$ & $e_{5}$ &
$e_{0}$ & $e'_{1}$ & $e'_{0}$ & $\xi_{0}\alpha_{1}$ &
$\xi_{1}\alpha_{2}$ & $\xi_{2}\alpha_{3}$ & $\xi_{3}\alpha_{4}$ &
$\xi_{4}\alpha_{5}$ & $\xi_{5}\alpha_{0}$ \\
\end{tabular*}
\end{center}
where  idempotent elements $e_{i}$, $e'_{i}$ are corresponding to
the vertex $(1, \rho_{i})$,  $(2, \sigma_{i})$ respectively, and
$\xi_{i}\in\mathbbm{k}$ satisfying $\xi_{0}\xi_{1}\cdots\xi_{5}=1$.
One can check that the generalized McKay quiver of $(\widehat{Q},
G)$ is just the quiver $Q$.

\medskip
By the definition given in Section \ref{sect-2}.3, we obtain the
symmetrisable generalized Cartan matrix $C$ corresponding to $(Q,
G)$, i.e.,
$$C={\small\left(
\begin{array}{cc}2&-1\\-3&2\\\end{array}\right)}.
$$
Then the valued graph $\Gamma$ corresponding to $C$ is
$$~~~~~~~~~~~~~~~~~~~~~~~\setlength{\unitlength}{1mm}
\begin{picture}(6,4)
\put(-4,0){$\bullet$}             \put(14,0){$\bullet$}
\put(-18,0){$\Gamma :$}        \put(-2,1){\line (1,0){15}}
\put(1,2){\small $(3,~1)$}     \put(-6,0){\small $1$}
\put(16,0){\small $2$}
\end{picture}
$$
Let $\overline{\varepsilon}_{1}, \overline{\varepsilon}_{2}$ be all
the simple roots of $\Gamma$. Then the Weyl group
$$
\mathcal{W}(\Gamma)\cong D_{6}=\langle a, b\mid a^{2}=1, b^{3}=1,
ab=b^{-1}a\rangle
$$
and root system $\Delta_{\Gamma}=\{\overline{\varepsilon}_{1},
\overline{\varepsilon}_{2},
\overline{\varepsilon}_{1}+\overline{\varepsilon}_{2},
2\overline{\varepsilon}_{1}+\overline{\varepsilon}_{2},
3\overline{\varepsilon}_{1}+\overline{\varepsilon}_{2},
3\overline{\varepsilon}_{1}+2\overline{\varepsilon}_{2}\}$. See
Section \ref{sect-2}.3 for detail.

\medskip
We consider the map
$$
h:\quad \mathbb{Z}\widehat{I}\longrightarrow\mathbb{Z}\mathcal{I},
\qquad h(\alpha)_{i}=\sum_{\rho\in{\rm irr} G_{i}}\alpha_{i\rho}
$$
for any $\alpha=\sum_{(i,
\rho)\in\widehat{I}}\alpha_{i\rho}\varepsilon_{(i\rho)
\in\widehat{I}}\in\mathbb{Z}\widehat{I}$. The restriction of
$h: \Delta_{\widehat{Q}}\rightarrow\Delta_{\Gamma}$ is a surjective,
this means that for any positive root $\beta$ of $\Gamma$, there
exists an indecomposable $\widehat{Q}$-representation $X$ such that
$h({\bf dim} X)=\beta$. For example, we consider the positive root
$\overline{\varepsilon}_{1}+\overline{\varepsilon}_{2}\in\Delta_{\Gamma}$.
Then, we have the following indecomposable
$\widehat{Q}$-representation $X_{(\rho_{3}\sigma_{0})}$:
$$~~~~~~~~~~~~~~~~~~~~~~~\setlength{\unitlength}{1mm}
\begin{picture}(50,17)
\put(-3,7){$\mathbbm{k}$}               \put(12,0){$0$}
\put(12,14){$\mathbbm{k}$}              \put(-20,7){$0$}
\put(11,2){\vector(-2,1){10}}     \put(11,14){\vector (-2,-1){10}}
\put(-17,8){\vector (1,0){13}}          \put(-11,9){\small$0$}
\put(2,12){\small$1$}                 \put(2,2){\small$0$}
\put(47,7){\small$0$}                   \put(60,0){$0$}
\put(60,14){$0$}                          \put(30,7){$0$}
\put(59,2){\vector (-2,1){10}}     \put(59,14){\vector (-2,-1){10}}
\put(33,8){\vector (1,0){13}}           \put(39,9){\small$0$}
\put(52,12){\small$0$}                  \put(52,2){\small$0$}
\end{picture}
$$
and obviously, $h({\bf dim}X_{(\rho_{3}\sigma_{0})})
=\overline{\varepsilon}_{1}+\overline{\varepsilon}_{2}$.
Furthermore, for any $0\leq l\leq 5$, $0\leq j\leq 1$ and
$l\not\equiv j\mod{2}$, we define the $\widehat{Q}$-representation
$X_{(\rho_{l}\sigma_{j})}=(X_{i\rho}, X_{\alpha})$ by
$$
X_{i\rho}=\left\{\begin{array}{ll}\mathbbm{k}, & \mbox{ if }
(i, \rho)=(1, \rho_{l}) \mbox{ or } (2, \sigma_{j});\\
0 , & \mbox{ otherwise. }\end{array}\right.\qquad
X_{\alpha}=\left\{\begin{array}{ll}1, & \mbox{ if }
\alpha=\alpha_{l};\\
0 , & \mbox{ otherwise. }
\end{array}\right.
$$
Then, it is easy to see that the set of all indecomposable
$\widehat{Q}$-representations with $h({\bf dim}X
)=\overline{\varepsilon}_{1}+\overline{\varepsilon}_{2}$ is the set
$$
\left\{X_{(\rho_{l}\sigma_{j})}\mid 0\leq l\leq 5, ~0\leq j\leq 1 \mbox{
and } l\not\equiv j\mod{2}\right\},
$$
and which is just the orbit of $X_{(\rho_{1}\sigma_{0})}$ under that
action of $G$. Similarly, for any positive real root
$\beta=h(\alpha)\in\Delta_{\Gamma}$, there are $|H_{\alpha}|$ (up to
isomorphism) indecomposable $\widehat{Q}$-representations $X$ such
that $h({\bf dim}X)=\beta.$
\end{example}

\begin{example} \label{exa5-2}
Let $Q=(I, E)$ be the quiver
$$~~~~~~~~~~~~~~~~~~~~~~~\setlength{\unitlength}{1mm}
\begin{picture}(50,15)
\put(10,5){$\bullet$}              \put(1,0){$\bullet$}
\put(1,10.5){$\bullet$}           \put(-16,10.5){$\bullet$}
\put(-16,0){$\bullet$}            \put(-18,0){\small $5$}
\put(13,5){\small $3$}              \put(1,2){\small $4$}
\put(-18,10){\small $1$}             \put(1,12.5){\small $2$}
\put(9,7){\vector (-3,2){5}}       \put(9,5){\vector (-3,-2){5}}
\put(0,11.5){\vector (-1,0){13}}  \put(0,1){\vector (-1,0){13}}
\put(-7,12){$\alpha_{1}$}          \put(-7,1.5){$\alpha_{4}$}
\put(7,9){$\alpha_{2}$}            \put(7,2.5){$\alpha_{3}$}
\put(60,5){$\bullet$}              \put(51,0){$\bullet$}
\put(51,10.5){$\bullet$}           \put(34,10.5){$\bullet$}
\put(34,0){$\bullet$}            \put(32,0){\small $5'$}
\put(63,5){\small $3'$}              \put(51,2){\small $4'$}
\put(32,10){\small $1'$}             \put(51,12.5){\small $2'$}
\put(59,7){\vector (-3,2){5}}       \put(59,5){\vector (-3,-2){5}}
\put(50,11.5){\vector (-1,0){13}}  \put(50,1){\vector (-1,0){13}}
\put(43,12){$\alpha'_{1}$}          \put(43,1.5){$\alpha'_{4}$}
\put(57,9){$\alpha'_{2}$}            \put(57,2.5){$\alpha'_{3}$}
\end{picture}
$$
and $G=\langle a\rangle\times\langle
b\rangle\cong\mathbb{Z}/2\mathbb{Z}\times\mathbb{Z}/2\mathbb{Z}$.
The action of $G$ on $\mathbbm{k}Q$ is given as follows
\begin{center}
\begin{tabular*}{9cm}{@{\extracolsep{\fill}}l|lllllllllllr}
& $e_{1}$ & $e_{2}$ & $e_{3}$ & $e_{4}$ & $e_{5}$ & $e_{1'}$ &
$e_{2'}$ & $e_{3'}$ & $e_{4'}$ & $e_{5'}$  \\
\hline $a$ & $e_{5}$ & $e_{4}$ & $e_{3}$ & $e_{2}$ & $e_{1}$ &
$e_{5'}$ & $e_{4'}$ & $e_{3'}$ & $e_{2'}$ & $e_{1'}$  \\
$b$ & $e_{1'}$ & $e_{2'}$ & $e_{3'}$ & $e_{4'}$ & $e_{5'}$
& $e_{1}$ & $e_{2}$ & $e_{3}$ & $e_{4}$ & $e_{5}$ \\
\end{tabular*}

\medskip
\begin{tabular*}{8cm}{@{\extracolsep{\fill}}l|llllllllllllllr}
& $\alpha_{1}$ & $\alpha_{2}$ & $\alpha_{3}$ & $\alpha_{4}$ &
$\alpha'_{1}$ & $\alpha'_{2}$ & $\alpha'_{3}$ & $\alpha'_{4}$\\
\hline $a$ & $\alpha_{4}$ & $\alpha_{3}$ & $\alpha_{2}$ &
$\alpha_{1}$ & $\alpha'_{4}$ & $\alpha'_{3}$ & $\alpha'_{2}$ &
$\alpha'_{1}$ \\
$b$ & $\alpha'_{1}$ & $\alpha'_{2}$ & $\alpha'_{3}$ & $\alpha'_{4}$
& $\alpha_{1}$ & $\alpha_{2}$ & $\alpha_{3}$ & $\alpha_{4}$ \\
\end{tabular*}\end{center}
where $e_{i}$ is the idempotent element of $\mathbbm{k}Q$
corresponding to the vertex $i$. Taken $\mathcal{I}=\{1, 2, 3\}$,
then the generalized McKay quiver of $(Q, G)$ is
$$~~~~~~~~~~~~~~~~~~~~~~~\setlength{\unitlength}{1mm}
\begin{picture}(50,18)
\put(25,8){$\bullet$}                 \put(38,1){$\bullet$}
\put(38,15){$\bullet$}               \put(8,8){$\bullet$}
\put(40,15){\small $(3,\rho_{0})$}     \put(40,0){\small
$(3,\rho_{1})$}                        \put(25,6){\small $2$}
\put(5,8){\small $1$}          \put(37,15){\vector (-2,-1){10}}
\put(37,3){\vector (-2,1){10}}       \put(24,9){\vector (-1,0){13}}
\put(-2,15){$\widehat{Q} :$}
\end{picture}
$$
where $\rho_{0}$, $\rho_{1}$ are the non-isomorphism irreducible
representations of $G_{3}=\langle
a\rangle\cong\mathbb{Z}/2\mathbb{Z}$. Reindexing the vertex set
$\widehat{I}=\{1, 2, (3,\rho_{0}), (3,\rho_{1})\}$ by $\{1, 2, 3,
4\},$ the Cartan matrix of $\widehat{Q}$ is
$$
A=(a_{ij})={\small\left(
\begin{array}{cccc}2&-1&0&0\\-1&2&-1&-1\\0&-1&2&0\\0&-1&0&2\\
\end{array}\right)}.
$$
The Lie algebra $\mathfrak{g}:=\mathfrak{g}(\widehat{Q})$ is
generated by $\{x_{i}, y_{i}, h_{i}\mid 1\leq i\leq4\}$ satisfying
the relations
$$
\begin{array}{llll} \quad[h_{i}, h_{j}]=0,
\qquad\qquad\qquad & [x_{i}, y_{j}]=\delta_{ij}h_{i};\\
\quad[h_{i}, x_{j}]=a_{ij}x_{j}, &[h_{i}, y_{j}]=-a_{ij}y_{j};\\
\quad(\ad x_{i})^{1-a_{ij}}(x_{j})=0, & (\ad
y_{i})^{1-a_{ij}}(y_{j})=0, \qquad i\neq j.
\end{array}
$$
In this case, the valued graph $\Gamma$ of $(Q, G)$ is
$$~~~~~~~~~~~~~~~~~~~~~~~\setlength{\unitlength}{1mm}
\begin{picture}(6,7)
\put(0,2.5){$\bullet$}                \put(13,2.5){$\bullet$}
\put(2,3.5){\line (1,0){10}}          \put(-13,2.5){$\bullet$}
\put(-11,3.5){\line (1,0){10}}          \put(-13,0){\small $1$}
\put(0,0){\small $2$}             \put(13,0){\small $3$}
\put(3,4.5){\small $(2,\,1)$}
\end{picture}
$$
with the Cartan Matrix
$$
C={\small\left(
\begin{array}{ccc}2&-1&0\\-1&2&-1\\0&-2&2\\\end{array}\right)}.
$$
The Lie algebra $\mathfrak{g}(\Gamma)$ is generated by $\{X_{i},
Y_{i}, H_{i}\mid 1\leq i\leq3\}$ satisfying the relations
\begin{eqnarray} \label{seqn}
\begin{array}{llll} \quad[H_{i},
H_{j}]=0, \qquad\qquad\qquad & [X_{i}, Y_{j}]=\delta_{ij}H_{i};\\
\quad[H_{i}, X_{j}]=c_{ij}X_{j}, & [H_{i}, Y_{j}]=-c_{ij}Y_{j};\\
\quad(\ad X_{i})^{1-c_{ij}}(X_{j})=0, & (\ad
Y_{i})^{1-c_{ij}}(Y_{j})=0, \qquad i\neq j.
\end{array}
\end{eqnarray}
As discussed in Section \ref{sect-3}.2, we see that  the vertices
$(3, \rho_{0})$ and $(3, \rho_{1})$ of $\widehat{Q}$ are in the same
$G$-orbit. Therefore, the Lie algebra $\mathfrak{g}^{\overline{G}}$
is generated by
$$
\{\overline{x}_{i},
\overline{y}_{i}, \overline{h}_{i}\mid 1\leq i\leq3\},
$$
where $\overline{x}_{i}=x_{i}$, $\overline{y}_{i}=y_{i}$,
$\overline{h}_{i}=h_{i}$ for $i=1,2$, and
$\overline{x}_{3}=x_{3}+x_{4}$, $\overline{y}_{3}=y_{3}+y_{4}$,
$\overline{h}_{3}=h_{3}+h_{4}$, satisfying the relations
(\ref{seqn}). Then, it is easy to see that the map
$$
\Phi : \quad
\mathfrak{g}(\Gamma)\longrightarrow\mathfrak{g}^{\overline{G}}
$$
given by
$$
\Phi(X_{i})=\overline{x}_{i}, \quad \Phi(Y_{i})=\overline{y}_{i},
\quad \Phi(H_{i})=\overline{h}_{i}
$$
is an Lie algebra isomorphism.

\medskip
At last, we consider quivers of $A$-type and $D$-type,
$$~~~~~~~~~~~~~~~~~~~~~~~\setlength{\unitlength}{1mm}
\begin{picture}(45,20)
\put(-38,17){$A_{2n+1} (n\geq 1):$}     \put(-34,11){$1$}
\put(-34,1){$1'$}                      \put(-32,2){\vector (1,0){5}}
\put(-32,12){\vector (1,0){5}}         \put(-26,11){$2$}
\put(-26,1){$2'$}                     \put(-24,2){\vector (1,0){5}}
\put(-24,12){\vector (1,0){5}}        \put(-18,1.5){$\cdots$}
\put(-18,11.5){$\cdots$}              \put(-13,2){\vector (1,0){5}}
\put(-13,12){\vector (1,0){5}}        \put(-7,11){$n$}
\put(-7,1){$n'$}                     \put(-4,3){\vector (3,2){5}}
\put(-4,11){\vector (3,-2){5}}        \put(2,6){$n+1$}
\put(24,17){$D_{n} (n\geq 3):$}         \put(56,6){$n-2$}
\put(65,8){\vector (3,2){5}}            \put(71,11){$n-1'$}
\put(65,7){\vector (3,-2){5}}           \put(71,2){$n-1$.}
\put(29,6){$1$}                         \put(31,7){\vector (1,0){5}}
\put(37,6){$2$}                         \put(39,7){\vector (1,0){5}}
\put(45,6.5){$\cdots$}                 \put(50,7){\vector (1,0){5}}
\put(16,0){$\mbox{and}$}
\end{picture}$$
They have the same quiver isomorphism group
$G=\mathbb{Z}/2\mathbb{Z}$. In these cases, we have
$$
\begin{tabular}{|c|c|c|c|c|c|}
\hline
$\quad\begin{picture}(8,14) \put(0,1){$Q$}\end{picture}\quad$ &
$\quad\begin{picture}(8,14) \put(0,1){$G$}\end{picture}\quad$ &
$\quad\begin{picture}(8,14) \put(0,1){$\Gamma$}\end{picture}\quad$ &
$\quad\begin{picture}(8,14)
\put(0,1){$\widehat{Q}$}\end{picture}\quad$ &
$\quad\begin{picture}(8,14)
\put(0,1){$\widehat{\Gamma}$}\end{picture}\quad$ &
$\quad\begin{picture}(100,14) \put(25,1){$\mbox{ Conclusion
}$}\end{picture} \quad$ \\ \hline $\quad\begin{picture}(8,16)
\put(-8,2){$A_{2n+1}$}\end{picture}\quad$ &
$\quad\begin{picture}(8,16)
\put(-6,2){$\mathbb{Z}/2\mathbb{Z}$}\end{picture}\quad$ &
$\quad\begin{picture}(8,16) \put(-6,2){$C_{n+1}$}\end{picture}\quad$
& $\quad\begin{picture}(8,16)
\put(-6,2){$D_{n+2}$}\end{picture}\quad$ &
$\quad\begin{picture}(8,16) \put(-6,2){$B_{n+1}$}\end{picture}\quad$
& ${\begin{picture}(60,16) \put(-20,2){$\mathfrak{g}(C_{n+1})\cong
\mathfrak{g}(D_{n+2})^{\mathbb{Z}/2\mathbb{Z}}$}
\end{picture}}$\\ \hline
$\quad\begin{picture}(8,16) \put(0,2){$D_{n}$}\end{picture}\quad$ &
$\quad\begin{picture}(8,16)
\put(-6,2){$\mathbb{Z}/2\mathbb{Z}$}\end{picture}\quad$ &
$\quad\begin{picture}(8,16) \put(-6,2){$B_{n-1}$}\end{picture}\quad$
& $\quad\begin{picture}(8,16)
\put(-8,2){$A_{2n-1}$}\end{picture}\quad$ &
$\quad\begin{picture}(8,16) \put(-6,2){$C_{n-1}$}\end{picture}\quad$
& ${\begin{picture}(60,16) \put(-20,2){$\mathfrak{g}(B_{n-1})\cong
\mathfrak{g}(A_{2n-1})^{\mathbb{Z}/2\mathbb{Z}}$}
\end{picture}}$\\
\hline
\end{tabular}
$$
where $C_{n}$ and $B_{n}$ is the $C$-type and $B$-type Dynkin
diagram, respectively.
\end{example}

\bibliographystyle{amsplain}

\end{document}